\newcommand\norm[1]{\lVert#1\rVert}
\newcommand\bignorm[1]{\bigl\lVert#1\bigr\rVert}
\newcommand\abs[1]{\lvert#1\rvert}
\newcommand\dual[2]{\langle #1, #2\rangle}
\newcommand\bigdual[2]{\bigl\langle #1, #2\bigr\rangle}
\newcommand\scalarprod[2]{( #1, #2)}
\newcommand\N{\mathbb{N}}
\newcommand\R{\mathbb{R}}
\renewcommand\d{\mathrm{d}}
\newcommand\ds{\mathrm{d}s}
\newcommand\dt{\mathrm{d}t}
\newcommand\dx{\mathrm{d}x}
\newcommand\dxt{\dx\,\dt}
\newcommand\meas{\operatorname{meas}}
\newcommand\gph{\operatorname{gph}}
\newcommand\supp{\operatorname{supp}}
\newcommand{\weakly}{\rightharpoonup}
\newcommand\absrho[1]{\abs{#1}_\rho}
\newcommand\absrhon[1]{\abs{#1}_{\rho_n}}
\newcommand\DD{\mathcal{D}}
\newcommand\EE{\mathcal{E}}
\newcommand\GG{\mathcal{G}}
\newcommand\LL{\mathcal{L}}
\renewcommand\SS{\mathcal{S}}
\newcommand\doi[1]{\href{http://dx.doi.org/\detokenize{#1}}{doi: \detokenize{#1}}}
\newtheorem{theorem}{Theorem}[section]
\newtheorem{lemma}[theorem]{Lemma}
\newtheorem{assumption}[theorem]{Assumption}
\newtheorem{corollary}[theorem]{Corollary}
\newtheorem{remark}[theorem]{Remark}
\let\autoref\cref
\colorlet{Changes@Color}{red}
\begin{document}
\title{Optimal control of a rate-independent evolution equation via viscous regularization}

\author{%
	Ulisse Stefanelli\footnote{
University of Vienna,
Faculty of Mathematics,
Oskar-Morgenstern-Platz 1, 1090 Wien, Austria;
ulisse.stefanelli@univie.ac.at%
}
	\and
	Daniel Wachsmuth\footnote{
	Institute of Mathematics, University of W\"urzburg, Emil-Fischer-Str.\ 40, 97074 W\"urzburg, Germany;  daniel.wachsmuth@mathematik.uni-wuerzburg.de}
	\footnotemark[4]
	\and
	Gerd Wachsmuth%
	\footnote{%
		Technische Universität Chemnitz,
		Faculty of Mathematics, Professorship Numerical Methods (Partial Differential Equations),
		09107 Chemnitz, Germany;
		gerd.wachsmuth@mathematik.tu-chemnitz.de%
	}
	\footnote{Partially supported by DFG grants within the Priority Program SPP~1962 (\emph{Non-smooth and Complementarity-based Distributed Parameter Systems: Simulation and Hierarchical Optimization})}
}

\maketitle

\medskip

\noindent {\bf Abstract.} {\small
We study the optimal control of a rate-independent system that is driven by a convex quadratic energy.
Since the associated solution mapping is non-smooth, the analysis of such control problems is challenging.
In order to
derive optimality conditions, we study the regularization of the problem via a smoothing
of the dissipation potential and via the addition of some viscosity. The resulting
regularized optimal control problem is analyzed. By driving the regularization parameter
to zero, we obtain a necessary optimality condition for the original, non-smooth problem.
}

\medskip

\noindent {\bf  Key words.} rate-independent system, optimal control, necessary optimality conditions.

\noindent {\bf AMS Subject Classifications.} 49K20,
35K87 

\section{Introduction}
\label{sec:intro}

Let a Lipschitz domain $\Omega\subset\R^d$ and  $T>0$ be given and set $I:=(0,T)$.
We study the optimal control of a non-smooth evolution problem given by the non-smooth
dissipation
\begin{equation}
	\label{eq:dissipation}
	\DD : H_0^1(\Omega) \to \R,
	\quad
	\DD(\dot z)
	:=
	\int_\Omega \abs{\dot z} \, \dx
\end{equation}
and the  quadratic  energy
\begin{equation}
	\label{eq:energy}
	\EE : H_0^1(\Omega) \times L^2(\Omega) \to \R,
	\quad
	\EE(z, g)
	:=
	\int_\Omega \frac12 \, \abs{\nabla z}^2  -  z \, g\, \dx,
\end{equation}
which give rise to the differential inclusion
\begin{equation}
	\label{eq:state_equation}
	0 \in  \partial|\dot z| - \Delta z -g \quad \text{ in }
        H^{-1}(\Omega), \text{ a.e.\@ in }
          I,
\end{equation}
 to be complemented  by the initial condition $z(0) = 0$.
Here,  $z \in H^1(I;H_0^1(\Omega))$ has the role of the state
variable, whereas $g \in H^1(I;L^2(\Omega))$ is the control.
The optimal control problem under consideration reads:
\begin{equation}
	\label{eq:optimal_control_problem}
	\tag{P}
	\text{Minimize} \;
	J(z,g)
	\text{ subject to \eqref{eq:state_equation} and $g(0)=0$},
\end{equation}
where $J$ denotes  a suitable objective functional, see
\eqref{eq:objective_functional} below.  The  requirement
$g(0)=0$  arises as  compatibility condition
 implying the {\it stability} of  the initial state $z(0) = 0$.

The  aim  of this article is  to derive  necessary optimality
conditions.  This turns out to be a quite demanding task, even in the basic setting of
\eqref{eq:state_equation}, for the dependence of the state on the
control is non-smooth. This reflects the non-smoothness of the
dissipation, which on the other hand is the trademark of
rate-independent evolution. In this connection, we refer the reader to
the recent monograph by \cite{MielkeRoubicek2015}, where a thorough
discussion of the current state of the art on rate-independent
systems is recorded.

Let us sketch the strategy of our method.
Under  rather mild  assumptions, the optimal control problem
\eqref{eq:optimal_control_problem} admits global solutions.
By letting  $(\bar z,\bar g)$ be locally optimal for the original
optimal control problem,  we find   $\delta>0$ such that
$J(\bar z,\bar g)\le J(z,g)$ for all $(z,g)$ with $\|g-\bar
g\|_{H^1(I;L^2(\Omega))}\le\delta$ and satisfying the constraints
in  \eqref{eq:optimal_control_problem}.
In order to prove necessary optimality conditions to be satisfied by $(\bar z,\bar g)$
we consider the regularized problem
\begin{equation}
 \min J(z,g) + \frac 12\|g-\bar g\|_{H^1(I;L^2(\Omega))}^2
\end{equation}
subject to $\|g-\bar g\|_{H^1(I;L^2(\Omega))} \le \delta$, $g(0)=0$,
and the regularized  problem
\begin{equation}
	\label{eq:state_equation_reg}
	0 =  \partial|\dot z|_\rho - \rho \Delta \dot z - \Delta z -g \quad \text{ in }
        H^{-1}(\Omega), \text{ a.e.\@ in }
          (0,T),  
	\quad z(0) = 0.
\end{equation}
Here,  $|\cdot|_\rho$  is a smooth approximation of  the modulus
$|\cdot|$.
The regularized state equation \eqref{eq:state_equation_reg} is smooth. Hence, necessary optimality conditions
for \eqref{eq:regularized_optimal_control_problem} can be derived by standard techniques.
The main challenge is then  to pass to the limit as  $\rho\searrow0$ in the optimality system.

As already mentioned above, the structure of the state equation
\eqref{eq:state_equation} is inspired by the theory of
rate-independent systems. These arise ubiquitously in applications,
ranging from mechanics and electromagnetism to economics and life
sciences, see \cite{MielkeRoubicek2015} besides the classical monographs
\cite{Visintin1994,BrokateSprekels1996,Krejci1996}.
In particular, the presence of the elliptic operator \eqref{eq:state_equation} can
be put in relation with the occurrence of
exchange energy term in micromagnetics \citep{DeSimone-James} or with
gradient plasticity theories \citep{Aifantis}.

Our method is based on regularizing the equation by adding some
viscosity. This relates with the classical  {\it
  vanishing-viscosity} approach to rate-independent systems. Pioneered
by \cite{Efendiev06}, evolutions
of this technology in the abstract setting are in a series of papers by
\cite{Mielke09,Mielke12,Mielke14}. See also \cite{Krejci09} for an existence theory
for discontinuous loadings based on Kurzweil integration.

Vanishing
viscosity has been applied in a number of mechanical contexts ranging
from plasticity with softening \citep{DalMaso08}, generalized materials
driven by nonconvex energies \citep{Fiaschi09}, crack propagation
\citep{Cagnetti08,Knees08,Knees10,Lazzaroni11,Lazzaroni13,Negri10,Toader09},
nonassociative plasticity of Cam-clay \citep{DalMaso11},
Armstrong-Frederick \citep{Francfort13}, cap type
\citep{Babadjian12}, and heterogeneous materials \citep{Solombrino14}. An application to adhesive contact is in
\cite{Roubicek13}, and damage problems via vanishing viscosity are studied in \cite{Knees13,Knees15}.
In all of these settings, the vanishing-viscosity approach has served
as a tool to circumvent non-convexity of the energy toward existence
of solutions. Our aim here is clearly
different for the energy ${\mathcal E}$ is convex. In particular, we exploit
vanishing viscosity in order to regularize the control-to-state
mapping and deriving optimality conditions.

Optimal  control of finite-dimensional rate-independent processes
 has been considered in
\cite{Brokate1987:1,Brokate1988,BrokateKrejci2013} and we witness  an increasing interest for the optimal control of sweeping processes,
see \cite{CastaingMonteiroMarquesRaynaudDeFitte2014,ColomboHenrionHoangMordukhovich2012,ColomboHenrionHoangMordukhovich2015,ColomboHenrionHoangMordukhovich2016}.
In the infinite-dimensional setting, the  available results are
scant.
The existence of optimal controls,  also in combination with
approximations,  was first studied by
\cite{Rindler2008,Rindler2009}
and subsequently  applied in the context of shape memory materials
by
\cite{EleuteriLussardi2014,EleuteriLussardiStefanelli2013,Stefanelli12}.
In these works, no optimality conditions were given.

 To our knowledge, optimality conditions in the time-continuous, rate-independent,
infinite-dimensional setting were firstly derived in
\cite{Wachsmuth2011:1,Wachsmuth2011:2,Wachsmuth2011:4}  in
the context of  quasi-static plasticity,
see also \cite{HerzogMeyerWachsmuth2014}.  Let us however mention
other  works  addressing
optimality conditions for control problem for rate-independent
systems  in combination with
time-discretizations,  namely  \cite{KocvaraOutrata2005,HerzogMeyerWachsmuth2009:2,HerzogMeyerWachsmuth2010:2,AdamOutraraRoubicek2015}.

 The plan of the paper is as follows.
 We firstly derive an optimality system for
\eqref{eq:optimal_control_problem} by means of formal calculations in
\autoref{subsec:formal_optimality}. The argument is then made rigorous
along the paper and brings to the proof of our main result, namely
\autoref{thm:main_theorem}.
The existence of a solution of \eqref{eq:optimal_control_problem} is
 at the core  of \autoref{sec:existence_solution},
see \autoref{lem:existence}.
In \autoref{sec:regularized},
we address the regularization of \eqref{eq:optimal_control_problem}
 instead.
We study the regularized state equation,
and derive an optimality system for the regularized control problem by means of the regularized adjoint equations.
 Eventually,  in \autoref{subsec:limit_process} we pass to the limit  in the
regularized control problem
 and rigorously obtain optimality
conditions for \eqref{eq:optimal_control_problem} in
\autoref{thm:main_theorem}.

\subsection*{Notation}
\label{subsec:notation}
Recall that $\Omega \subset \R^d$ is a bounded  Lipschitz  domain, $T>0$, and let $I:=(0,T)$
and
$Q := I \times \Omega$.
We
work with standard function spaces like $H_0^1(\Omega)$ and $L^2(\Omega)$.
The space $H_0^1(\Omega)$ is equipped with the norm and scalar product
\begin{equation*}
	\scalarprod{z}{w}_{H_0^1(\Omega)} := \int_\Omega \nabla z \cdot \nabla w \, \dx
	\qquad\text{and}\qquad
	\norm{z}_{H_0^1(\Omega)}^2 := \scalarprod{z}{z}_{H_0^1(\Omega)},
\end{equation*}
respectively.
Throughout the text,
$\Delta : H_0^1(\Omega) \to H^{-1}(\Omega)$ denotes the distributional Laplacian
and $\Delta^{-1} : H^{-1}(\Omega) \to H_0^1(\Omega)$ denotes its inverse.

Moreover, we use the Bochner spaces
$L^p(I; H)$, $W^{1,p}(I; H)$, and $H^1(I; H)$,
where $H$ is a Hilbert space.
By $W^{-1,p}(I; H')$ we denote the dual space of $W^{1,p'}(I; H)$,
where $1/p + 1/p' = 1$, $p,p' \in (1,\infty)$.
Since our state equation is equipped with homogeneous initial conditions,
we also use
\begin{equation*}
	H^1_\star(I; H)
	:=
	\{ v \in H^1(I; H) : v(0) = 0\}
	.
\end{equation*}

We will consider optimal control problems with an objective functional of the type
\begin{equation}\label{eq:objective_functional}
 J(z,g)
 := j_1(z) + j_2(z(T)) + \frac12\|g\|_{H^1(I;L^2(\Omega))}^2,
\end{equation}
where the functions $j_1: L^2(I;H_0^1(\Omega)) \to \R$ and $j_2:H_0^1(\Omega)\to \R$ are assumed to be continuously Fréchet differentiable and bounded from below.

\section{Formal derivation of an optimality system}
\label{subsec:formal_optimality}
In this section, we \emph{formally} derive an optimality system.
It is clear that the resulting system may not be a necessary optimality condition.
However, this derivation sheds some light on the situation
and we get an idea what relations can be expected as necessary conditions.

We start by (formally) restating the optimal control problem by
\begin{equation*}
	\begin{aligned}
		\text{Minimize}  \quad & J(z,g) \\
		\text{such that} \quad & (\dot z(t,x),\, g(t,x) + \Delta z(t,x)) \in M \; \forall (t,x) \in (0,T) \times \Omega.
	\end{aligned}
\end{equation*}
Here,
\begin{align*}
	M &:= \gph \partial\abs{\cdot}
	=
	\{
		(u,v) \in \R^2 : v \in \partial\abs{u}
	\}
	\\
	&=
	\bigh(){
		(-\infty, 0] \times \{-1\}
	}
	\cup
	\bigh(){
		\{0\} \times [-1,1]
	}
	\cup
	\bigh(){
		[0, \infty) \times \{+1\}
	}
	.
\end{align*}
The Lagrangian for this optimization problem is given by
\begin{equation*}
	\LL(z, g, q, \xi)
	=
	J(z,g)
	-
	\scalarprod{q}{\dot z}_{L^2(Q)}
	+
	\scalarprod{\xi}{g + \Delta z}_{L^2(Q)}
	.
\end{equation*}
As (formal) optimality conditions, we would expect
\begin{subequations}
	\label{eq:formal_adjoint_system}
\begin{align}
	\label{eq:formal_adjoint_system_1}
	0 &= \frac{\partial}{\partial z} \LL(z, g, q, \xi),
	\\
	\label{eq:formal_adjoint_system_2}
	0 &= \frac{\partial}{\partial g} \LL(z, g, q, \xi),
	\\
	\label{eq:formal_adjoint_system_3}
	(-q(t,x), \xi(t,x)) &\in N_M(\dot z(t,x),\, g(t,x) + \Delta z(t,x)).
\end{align}
\end{subequations}
Here, $N_M$ is a normal-cone mapping associated with the closed set $M \subset \R^2$.
Since $M$ is not convex, the different normal cones of variational analysis, namely Fréchet, Clarke, Mordukhovich,
do not coincide.
In particular, by using the Fréchet normal cone, which is the smallest among these,
we would expect
the relations
\begin{subequations}
	\label{eq:formal_sign_conditions}
	\begin{align}
		\label{eq:formal_sign_conditions_1}
		\dot z(t,x) > 0,\;      g(t,x) + \Delta z(t,x)  =  1  &\quad\Longrightarrow\quad   q(t,x) = 0, \\
		\label{eq:formal_sign_conditions_2}
		\dot z(t,x) = 0,\;      g(t,x) + \Delta z(t,x)  =  1  &\quad\Longrightarrow\quad   q(t,x) > 0, \; \xi(t,x) > 0, \\
		\label{eq:formal_sign_conditions_3}
		\dot z(t,x) = 0,\; \abs{g(t,x) + \Delta z(t,x)} <  1  &\quad\Longrightarrow\quad   \xi(t,x) = 0, \\
		\label{eq:formal_sign_conditions_4}
		\dot z(t,x) = 0,\;      g(t,x) + \Delta z(t,x)  = -1  &\quad\Longrightarrow\quad   q(t,x) < 0, \; \xi(t,x) < 0, \\
		\label{eq:formal_sign_conditions_5}
		\dot z(t,x) < 0,\;      g(t,x) + \Delta z(t,x)  = -1  &\quad\Longrightarrow\quad   q(t,x) = 0.
	\end{align}
\end{subequations}
The above equations \eqref{eq:formal_adjoint_system_1}--\eqref{eq:formal_adjoint_system_2} for $q, \xi$ could be written as
\begin{subequations}
	\label{eq:formal_adjoint_equation}
	\begin{align}
		\label{eq:formal_adjoint_equation_1}
		q(T) &= \mrep{j_2'(z(T))}{j_1'(z) + \Delta \xi}  \qquad \text{a.e.\ on } \Omega, \\
		\label{eq:formal_adjoint_equation_2}
		-\dot q &= j_1'(z) + \Delta \xi  \qquad \text{a.e.\ on } Q,\\
		\label{eq:formal_adjoint_equation_3}
		-\ddot g + g +\xi &= \mrep{0}{j_1'(z) + \Delta \xi}  \qquad \text{a.e.\ on } Q.
	\end{align}
\end{subequations}
Here, \eqref{eq:formal_adjoint_equation_3} is equipped with the boundary conditions
$g(0) = \dot g(T) = 0$.
Hence, this formal derivation suggests that
for each local solution $(z,g)$ of \eqref{eq:optimal_control_problem}, there exist
functions $q, \xi$ such that
\eqref{eq:formal_sign_conditions} and \eqref{eq:formal_adjoint_equation} are satisfied.

\section{Unregularized optimal control problem}
\label{sec:existence_solution}
In this section, we give some first results concerning
the optimal control problem \eqref{eq:optimal_control_problem}.
We recall some known results
for the state equation
and prove the existence of solutions to \eqref{eq:optimal_control_problem}.

A concept tailored to rate-independent systems
is the notion of \emph{energetic solutions},
see \cite[Section~1.6]{MielkeRoubicek2015}.
Since the energy \eqref{eq:energy} is convex,
our situation is much more comfortable and we can use
the formulation \eqref{eq:state_equation},
which is strong in time.
Indeed,
for every $g \in H^1(I; H^{-1}(\Omega))$ with $\norm{g(0)}_{L^\infty(\Omega)} \le 1$,
there is a unique energetic solution $\SS(g) := z \in H^1_\star(I; H_0^1(\Omega))$
and this is the unique solution to \eqref{eq:state_equation},
see \cite[Section~1.6.4, Theorem~3.5.2]{MielkeRoubicek2015}.

The requirement
$\norm{g(0)}_{L^\infty(\Omega)} \le 1$ is needed
as a compatibility condition.
Indeed, it ensures that
$\Delta z(0)+g(0) = g(0)$ is in the range of $\partial_{\dot z} \DD = \partial \|\cdot\|_{L^1(\Omega)}$.
Hence, we define
\begin{equation*}
	\GG_0
	:=
	\{ g \in C(\bar I ; H^{-1}(\Omega)) : \norm{g(0)}_{L^\infty(\Omega)} \le 1\}.
\end{equation*}

Due to the quadratic nature of the energy,
it is possible to recast
the state equation as an evolution variational inequality
in the sense of \cite{Krejci1996}.
\begin{lemma}
	\label{lem:evi}
	Let $z \in H_\star^1(I; H_0^1(\Omega))$
	and $g \in H^1(I; H^{-1}(\Omega)) \cap \GG_0$
	be given.
	Then, the state equation \eqref{eq:state_equation} in $H^{-1}(\Omega)$ is
	equivalent to
	\begin{equation}
		\label{eq:evi}
		\dot z
		\in
		N_{\tilde K}^{\textup{Hilbert}}( - z - \Delta^{-1} g )
		\quad\text{in $H_0^1(\Omega)$, a.e.\ on }(0,T)
	\end{equation}
	and to
	\begin{equation}
		\label{eq:evi_dual}
		\dot z
		\in
		N_{K}( \Delta z + g)
		\quad\text{in $H_0^1(\Omega)$, a.e.\ on }(0,T)
		.
	\end{equation}
	Here,
	\begin{equation*}
		N_{\tilde K}^{\textup{Hilbert}}( v )
		:=
		\{
			w \in H_0^1(\Omega)
			:
			\scalarprod{w}{\tilde v - v}_{H_0^1(\Omega)} \le 0 \quad \forall \tilde v \in \tilde K
		\}
	\end{equation*}
	is the (Hilbert space) normal cone of the set
	\begin{equation*}
		\tilde K
		:=
		\{
			w \in H_0^1(\Omega) :
			-1 \le \Delta w \le 1 \text{ a.e.\ in } \Omega
		\}
		=
		\Delta^{-1}(K)
	\end{equation*}
	at $v \in \tilde K$
	and
	\begin{equation*}
		N_K( v )
		:=
		\{
			w \in H_0^1(\Omega) :
			\dual{ w }{\tilde v - v}_{H^{-1}(\Omega), H_0^1(\Omega)} \le 0 \quad \forall \tilde v \in K
		\}
	\end{equation*}
	is the normal cone of the set
	\begin{equation*}
		K := \{ v \in H^{-1}(\Omega) : v \in L^2(\Omega) \text{ and } -1 \le v \le 1 \text{ a.e.\ in } \Omega \}
		=
		\Delta(\tilde K)
	\end{equation*}
	at
	$v \in K$.
\end{lemma}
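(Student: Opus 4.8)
The plan is to show the chain of equivalences
\eqref{eq:state_equation} $\Leftrightarrow$ \eqref{eq:evi_dual} $\Leftrightarrow$ \eqref{eq:evi},
working pointwise in time $t \in (0,T)$ (which is legitimate since all three statements are required to hold a.e.\ on $(0,T)$, and $z \in H^1_\star(I;H_0^1(\Omega))$, $g \in H^1(I;H^{-1}(\Omega)) \cap \GG_0$ guarantee that $\dot z(t) \in H_0^1(\Omega)$, $\Delta z(t) + g(t) \in H^{-1}(\Omega)$ make sense for a.e.\ $t$).

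First I would treat the equivalence of \eqref{eq:state_equation} and \eqref{eq:evi_dual}.
For fixed $t$, write $u := \dot z(t) \in H_0^1(\Omega)$ and $v := \Delta z(t) + g(t) \in H^{-1}(\Omega)$.
The inclusion \eqref{eq:state_equation} reads $v \in \partial\DD(u)$, where $\DD$ is the convex functional from \eqref{eq:dissipation}, extended to $\partial\|\cdot\|_{L^1(\Omega)}$ acting on $H_0^1(\Omega)$; recall $v \in \partial\DD(u)$ is equivalent to $u \in \partial\DD^*(v)$ by convex duality.
The key point is to identify the Legendre–Fenchel conjugate $\DD^*$: since $\DD(u) = \|u\|_{L^1(\Omega)}$, its conjugate on the dual pairing is the indicator function $I_K$ of the set $K = \{v : |v| \le 1 \text{ a.e.}\}$, i.e.\ $\DD^* = I_K$, so that $\partial\DD^* = \partial I_K = N_K$. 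This gives $u \in N_K(v)$, which is exactly \eqref{eq:evi_dual}. The only mild subtlety here is the functional-analytic bookkeeping: $\DD$ is finite and continuous on $H_0^1(\Omega) \hookrightarrow L^1(\Omega)$, so the subdifferential sum/conjugate rules apply, and the compatibility condition $g \in \GG_0$ (hence $v(0) \in K$) is what makes the inclusion nonempty at $t = 0$ and consistent with $z(0)=0$; I would note this but it is not the crux.

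Next, the equivalence of \eqref{eq:evi_dual} and \eqref{eq:evi} is a change of variables via $\Delta^{-1}$.
Set $w := -z(t) - \Delta^{-1} g(t) \in H_0^1(\Omega)$, so that $\Delta w = \Delta z(t) + g(t) = v$ (using $\Delta(-z - \Delta^{-1}g) = -\Delta z - g$... here I must be careful with the sign; since $-\Delta$ is the positive operator, I would in fact take $w := -z(t) - \Delta^{-1}g(t)$ and compute $\Delta w = -\Delta z(t) - g(t) = -v$, then track the resulting sign in the normal cone — the set $K$ is symmetric, so $-v \in K \Leftrightarrow v \in K$, and the normal-cone relation transforms correctly). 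The claim $\tilde K = \Delta^{-1}(K)$ and $K = \Delta(\tilde K)$ is immediate from the definitions since $\Delta : H_0^1 \to H^{-1}$ is a bijection. Then for $u \in H_0^1(\Omega)$ one has $u \in N_K(\Delta w)$ in the $H^{-1}$–$H_0^1$ duality if and only if $\dual{u}{\Delta \tilde w - \Delta w}_{H^{-1},H_0^1} \le 0$ for all $\tilde w \in \tilde K$, and since $\dual{u}{\Delta \tilde w - \Delta w}_{H^{-1},H_0^1} = -\scalarprod{u}{\tilde w - w}_{H_0^1(\Omega)}$ (by definition of the $H_0^1$ scalar product as $\int \nabla\cdot\nabla\cdot$ and $\Delta = $ the Riesz map up to sign), this is exactly $u \in N_{\tilde K}^{\textup{Hilbert}}(w)$, i.e.\ \eqref{eq:evi}.

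I expect the main obstacle to be purely notational rather than conceptual: getting every sign right in the triple identification $\dual{\cdot}{\Delta\cdot} = \pm\scalarprod{\cdot}{\cdot}_{H_0^1}$ and in the substitution $w = -z - \Delta^{-1}g$, and making sure the symmetry of $K$ and $\tilde K$ is invoked at the right spot so that the minus signs cancel. The genuine mathematical content — that $(\|\cdot\|_{L^1})^* = I_K$ and that a maximal monotone inclusion $v \in \partial\DD(u)$ flips to $u \in \partial\DD^*(v)$ — is standard convex analysis, so I would state it with a reference (e.g.\ to \citet{Krejci1996} or a standard convex-analysis text) and spend the write-up on the pointwise-in-time reduction and the careful sign tracking in the $\Delta^{-1}$ transformation.
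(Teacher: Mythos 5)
Your proposal is correct and follows essentially the same route as the paper, whose proof consists of a one-line appeal to ``standard results in convex analysis'': your identification of the Fenchel conjugate $\DD^* = I_K$ on $H^{-1}(\Omega)$, the subdifferential inversion $v \in \partial\DD(u) \Leftrightarrow u \in \partial\DD^*(v)$, and the change of variables via $\Delta^{-1}$ (with the sign absorbed by the symmetry of $K$ and $\tilde K$) are exactly the details that citation compresses. Your sign bookkeeping in the second step works out as you anticipate, so no gap remains.
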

\begin{proof}
	The assertion follows directly from standard results in convex analysis
	by
	using the definition of the dissipation \eqref{eq:dissipation}
	and of the energy \eqref{eq:energy}.
\end{proof}
The mapping $(-\Delta^{-1} g) \mapsto z$ is also known as the \emph{play operator},
see \cite[Section~I.3]{Krejci1996}.
From \cite[Remark~I.3.10, Theorem~I.3.12]{Krejci1996}
we find the following regularity results
for equation \eqref{eq:evi}.
\begin{lemma}
	\label{lem:mapping_properties_control_state}
	The control-to-state map $\SS$
	is continuous from
	$H^1(I; H^{-1}(\Omega)) \cap \GG_0$
	to
	$H^1_\star(I; H_0^1(\Omega))$
	and
	Lipschitz continuous from
	$W^{1,1}(I; H^{-1}(\Omega)) \cap \GG_0$
	to
	$L^\infty(I; H_0^1(\Omega))$.
\end{lemma}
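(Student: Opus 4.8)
The plan is to recognize \eqref{eq:evi} as the defining variational inequality of the \emph{play operator} on the Hilbert space $H_0^1(\Omega)$ with characteristic set $\tilde K$, and then to import the regularity results for this operator from \cite{Krejci1996}. The structural observation that makes this work is that, with the scalar product chosen on $H_0^1(\Omega)$, the operator $-\Delta : H_0^1(\Omega) \to H^{-1}(\Omega)$ is precisely the Riesz isomorphism; hence $\Delta^{-1}$ is, up to a sign, its inverse, and therefore a linear isometric isomorphism from $H^{-1}(\Omega)$ onto $H_0^1(\Omega)$. Acting on functions of time, the map $g \mapsto -\Delta^{-1} g$ is then an isometric isomorphism from $H^1(I; H^{-1}(\Omega))$ onto $H^1(I; H_0^1(\Omega))$, from $W^{1,1}(I; H^{-1}(\Omega))$ onto $W^{1,1}(I; H_0^1(\Omega))$, and from $C(\bar I; H^{-1}(\Omega))$ onto $C(\bar I; H_0^1(\Omega))$.

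First I would use \autoref{lem:evi} to write $\SS = \mathcal{P} \circ (-\Delta^{-1})$, where $\mathcal{P}$ assigns to an input $u$ (with $u(0) \in \tilde K$) the unique solution $z \in H^1_\star(I; H_0^1(\Omega))$ of $\dot z \in N_{\tilde K}^{\textup{Hilbert}}(u - z)$ a.e.\ on $(0,T)$, i.e.\ the play operator with characteristic $\tilde K$ and initial output $z(0) = 0$. One then checks that the admissibility condition built into $\GG_0$ is exactly the admissibility condition for $\mathcal{P}$: since $\Delta(-\Delta^{-1} g(0)) = -g(0)$, the bound $\norm{g(0)}_{L^\infty(\Omega)} \le 1$ is equivalent to $-1 \le -g(0) \le 1$, that is, to $-\Delta^{-1} g(0) \in \tilde K$. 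Thus $\SS$ restricted to $\GG_0$ corresponds, under the above isometry, precisely to $\mathcal{P}$ restricted to its admissible inputs.

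Now the two assertions follow by quoting the corresponding statements for the play operator from \cite[Remark~I.3.10, Theorem~I.3.12]{Krejci1996}: $\mathcal{P}$ is continuous from $H^1(I; H_0^1(\Omega))$ into itself, and Lipschitz continuous from $W^{1,1}(I; H_0^1(\Omega))$ into $C(\bar I; H_0^1(\Omega))$, hence in particular into $L^\infty(I; H_0^1(\Omega))$. Precomposing with the isometry $-\Delta^{-1}$ transfers these properties verbatim to $\SS$ on $H^1(I; H^{-1}(\Omega)) \cap \GG_0$ and on $W^{1,1}(I; H^{-1}(\Omega)) \cap \GG_0$, which is the claim.

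The main point requiring care is the identification itself: one has to make sure that $\tilde K$ is an admissible characteristic set for the cited theorems (it is closed and convex, which is all that is needed, even though it has empty interior in $H_0^1(\Omega)$) and that the prescribed initial condition $z(0)=0$ is the initial output of the play. If one prefers to keep the Lipschitz part self-contained, testing the difference of two copies of \eqref{eq:evi} with the respective admissible elements $u_i(t) - z_i(t) \in \tilde K$ gives $\frac12 \frac{\d}{\dt}\norm{z_1-z_2}_{H_0^1(\Omega)}^2 \le \scalarprod{\dot z_1 - \dot z_2}{u_1-u_2}_{H_0^1(\Omega)}$; integrating, integrating by parts in time, using $z_1(0)=z_2(0)=0$, and a standard maximum-over-$[0,t]$ argument then yields $\norm{z_1-z_2}_{C(\bar I;H_0^1(\Omega))} \le C\,\norm{u_1-u_2}_{W^{1,1}(I;H_0^1(\Omega))}$. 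The $H^1$-continuity statement is the genuinely delicate one — the play operator is \emph{not} Lipschitz between $H^1$ spaces — and this is where reliance on \cite{Krejci1996} is most valuable.
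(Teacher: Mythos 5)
Your argument is exactly the paper's: the paper gives no separate proof, but simply observes that $(-\Delta^{-1}g)\mapsto z$ is the play operator with characteristic set $\tilde K$ (via \autoref{lem:evi} and the fact that $-\Delta$ is the Riesz isomorphism for the chosen scalar product on $H_0^1(\Omega)$) and quotes \cite[Remark~I.3.10, Theorem~I.3.12]{Krejci1996}, which is precisely your identification and citation. Your additional checks (that $\GG_0$ encodes the admissibility $-\Delta^{-1}g(0)\in\tilde K$, and the self-contained testing argument for the $W^{1,1}$--$L^\infty$ Lipschitz estimate) are correct and only elaborate the same route.
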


The next lemma provides the energy equality \eqref{eq:orthogonality},
which will be crucial to prove the consistency
of the regularization in $H^1_\star(I; H_0^1(\Omega))$, cf.\ \autoref{thm:convergence_in_H1}.
\begin{lemma}
	\label{lem:boundedness_unregularized}
	Let $g \in H^1(I; H^{-1}(\Omega)) \cap \GG_0$ be given
	and set $z = \SS(g)$.
	Then, we have
	\begin{equation}
		\label{eq:orthogonality}
		\dual{\dot z(t)}{
			\Delta \dot z(t) + \dot g(t)
		}
		_{H_0^1(\Omega), H^{-1}(\Omega)}
		= 0
		\qquad\text{for a.a.\ } t \in I.
	\end{equation}
\end{lemma}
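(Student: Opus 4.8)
The plan is to build on the variational-inequality reformulation \eqref{eq:evi_dual} from \autoref{lem:evi}. Set $\chi := \Delta z + g$. Since $z \in H^1_\star(I;H_0^1(\Omega))$ and $g \in H^1(I;H^{-1}(\Omega))$, we have $\chi \in H^1(I;H^{-1}(\Omega)) \embeds C(\bar I;H^{-1}(\Omega))$, and, $\Delta$ being bounded and linear, its weak time derivative is $\dot\chi = \Delta\dot z + \dot g$; hence \eqref{eq:orthogonality} is precisely $\dual{\dot z(t)}{\dot\chi(t)}_{H_0^1(\Omega),H^{-1}(\Omega)} = 0$ for a.a.\ $t$. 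From \eqref{eq:evi_dual} we know $\dot z(t) \in N_K(\chi(t))$ for a.a.\ $t \in I$, which in particular forces $\chi(t) \in K$ for a.a.\ $t$. As $K$ is bounded, convex and closed in $L^2(\Omega)$ and $L^2(\Omega)$ embeds continuously into $H^{-1}(\Omega)$, the set $K$ is closed in $H^{-1}(\Omega)$; combined with the continuity of $\chi$ this upgrades to $\chi(t) \in K$ for \emph{every} $t \in \bar I$.

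First I would fix $t \in I$ outside the exceptional null set, i.e.\ a common Lebesgue point of $\dot z$ and $\dot\chi$ at which $\dot z(t) \in N_K(\chi(t))$ holds. For every $s \ne 0$ with $t+s \in \bar I$, inserting the competitor $\chi(t+s) \in K$ into the definition of $N_K(\chi(t))$ gives
\begin{equation*}
	\dual{\dot z(t)}{\chi(t+s) - \chi(t)}_{H_0^1(\Omega),H^{-1}(\Omega)} \le 0 .
\end{equation*}
I would divide this by $s$, use $\chi(t+s) - \chi(t) = \int_t^{t+s}\dot\chi(\tau)\,\d\tau$ and the Lebesgue differentiation theorem in $H^{-1}(\Omega)$ to pass to the limit: letting $s \downarrow 0$ yields $\dual{\dot z(t)}{\dot\chi(t)}_{H_0^1(\Omega),H^{-1}(\Omega)} \le 0$, while letting $s \uparrow 0$ (so that dividing the nonpositive quantity by $s<0$ reverses the sign) yields $\dual{\dot z(t)}{\dot\chi(t)}_{H_0^1(\Omega),H^{-1}(\Omega)} \ge 0$. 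Together these give \eqref{eq:orthogonality}. Observe that no uniformity in $s$ is needed, since the only $s$-dependent ingredient, namely $\chi(t+s) \in K$, holds for all admissible $s$ at once.

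The one point needing care is this limit passage, which rests on the a.e.\ differentiability of Bochner--Sobolev functions (the Lebesgue-point property of $\dot\chi \in L^1(I;H^{-1}(\Omega))$) and on the continuity of the duality pairing; the closedness of $K$ in $H^{-1}(\Omega)$ and the commutation of $\Delta$ with $\d/\dt$ are routine. As a cross-check, the same conclusion follows from the primal form \eqref{eq:evi}: writing $v := -z - \Delta^{-1}g \in H^1(I;H_0^1(\Omega))$ and using the Hilbert-space normal cone, the identical two-sided difference-quotient argument yields $\scalarprod{\dot z(t)}{\dot v(t)}_{H_0^1(\Omega)} = 0$, which reduces to \eqref{eq:orthogonality} via $\scalarprod{w}{\Delta^{-1}h}_{H_0^1(\Omega)} = -\dual{w}{h}_{H_0^1(\Omega),H^{-1}(\Omega)}$ and $\norm{\dot z(t)}_{H_0^1(\Omega)}^2 = -\dual{\dot z(t)}{\Delta\dot z(t)}_{H_0^1(\Omega),H^{-1}(\Omega)}$.
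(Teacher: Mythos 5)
Your proposal is correct and follows essentially the same route as the paper: the paper also tests the normal-cone inclusion \eqref{eq:evi_dual} with the admissible competitors $\Delta z(t\pm h)+g(t\pm h)\in K$ and passes to the limit $h\searrow 0$ via Lebesgue's differentiation theorem for Bochner integrals, obtaining the two one-sided inequalities that combine to \eqref{eq:orthogonality}. Your additional remarks (closedness of $K$ in $H^{-1}(\Omega)$, commutation of $\Delta$ with the time derivative, the cross-check via \eqref{eq:evi}) are just more explicit bookkeeping of the same argument.
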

\begin{proof}
	Using \eqref{eq:evi_dual}
	and
	$\Delta z(s) + g(s) \in K$ for all $s \in I$,
	we find
	\begin{equation*}
		\dual{\dot z(t)}{
			\Delta z(t \pm h) + g(t \pm h)
			-
			(
				\Delta z(t) + g(t)
		)
		}
		_{H_0^1(\Omega), H^{-1}(\Omega)}
		\le 0
	\end{equation*}
	for almost all $t \in I$ and all $h > 0$
	such that $t \pm h \in I$.
	Using Lebesgue's differentiation theorem,
	see \cite[Theorem~II.2.9]{DiestelUhl1977}
	for the version with Bochner integrals,
	we can pass to the limit $h \searrow 0$.
	This yields the claim,
	see also \cite[(I.3.22)(ii)]{Krejci1996}.
\end{proof}
We note that the a-priori energy estimate
\begin{equation}
	\label{eq:energy_estimate}
	\norm{\dot z(t)}_{H_0^1(\Omega)}
	=
	\norm{\Delta \dot z(t)}_{H^{-1}(\Omega)}
	\le
	\norm{\dot g(t)}_{H^{-1}(\Omega)}
	\qquad\text{for a.a.\ } t \in I
\end{equation}
follows immediately from \eqref{eq:orthogonality}.

In order to prove the existence of solutions of the optimal control problem
\eqref{eq:optimal_control_problem},
we need to show a weak continuity result for $\SS$.
Recall, that $H^1(I; L^2(\Omega))$ is \emph{not} compactly embedded in $H^1(I;H^{-1}(\Omega))$,
hence, the following result is not a simple consequence of \autoref{lem:mapping_properties_control_state}.
Similarly, it does not directly follow from Helly's selection theorem,
which would only give pointwise weak convergence of the state variable.
We note that a similar argument was used in \cite[Theorem~2.3, Section~2.3]{Wachsmuth2011:1}.
\begin{lemma}
	\label{lem:weak_continuity}
	Let $\{g_n\}_{n \in \N} \subset H^1(I; L^2(\Omega)) \cap \GG_0$
	be given such that $g_n \weakly g$ in $H^1(I; L^2(\Omega))$.
	Then, $z_n := \SS(g_n) \weakly \SS(g) =: z$ in $H^1_\star(I; H_0^1(\Omega))$
	and
	$z_n \to z$ in $C(\bar I; H_0^1(\Omega))$.
\end{lemma}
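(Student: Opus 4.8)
The plan is to exploit the a-priori bounds to extract weakly convergent subsequences, and then to identify the weak limit as $\SS(g)$ by passing to the limit in the variational inequality \eqref{eq:evi_dual}. First, since $g_n \weakly g$ in $H^1(I;L^2(\Omega))$, the sequence $\{g_n\}$ is bounded in that space, hence $\{\dot g_n\}$ is bounded in $L^2(I;L^2(\Omega)) \embeds L^2(I;H^{-1}(\Omega))$. By the energy estimate \eqref{eq:energy_estimate}, $\{\dot z_n\}$ is bounded in $L^2(I;H_0^1(\Omega))$, so $\{z_n\}$ is bounded in $H^1_\star(I;H_0^1(\Omega))$. After passing to a subsequence (not relabeled), $z_n \weakly \hat z$ in $H^1_\star(I;H_0^1(\Omega))$ for some $\hat z$, and by the Aubin–Lions lemma (using that $H_0^1(\Omega) \embeds L^2(\Omega)$ compactly) we also get $z_n \to \hat z$ in $C(\bar I; L^2(\Omega))$; combined with the bound in $L^\infty(I;H_0^1(\Omega))$ and a standard interpolation/equicontinuity argument, one upgrades this to $z_n \to \hat z$ in $C(\bar I; H_0^1(\Omega))$.

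It remains to show $\hat z = \SS(g)$, i.e.\ that $\hat z$ solves \eqref{eq:evi_dual} with datum $g$. The natural route is to test the variational inequality: for any measurable selection $v(t) \in K$ and any nonnegative $\varphi \in L^\infty(I)$, integrating \eqref{eq:evi_dual} for $z_n$ gives
\begin{equation*}
	\int_I \varphi(t)\, \dual{\dot z_n(t)}{v(t) - \Delta z_n(t) - g_n(t)}_{H_0^1,H^{-1}}\,\dt \le 0.
\end{equation*}
Now $\dot z_n \weakly \dot{\hat z}$ in $L^2(I;H_0^1(\Omega))$ while $\Delta z_n + g_n \to \Delta \hat z + g$ strongly in $L^2(I;H^{-1}(\Omega))$ (using $z_n \to \hat z$ in $C(\bar I;H_0^1(\Omega))$ for the first summand and $g_n \weakly g$ plus, if needed, the compact embedding in time for the second — actually $g_n \to g$ strongly in $C(\bar I;H^{-1}(\Omega))$ by Aubin–Lions since $L^2(\Omega) \embeds H^{-1}(\Omega)$ is compact). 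Hence the pairing passes to the limit and yields the integrated inequality for $\hat z$. Since $v$ and $\varphi$ are arbitrary, a Lebesgue-point argument recovers the pointwise inclusion $\dot{\hat z}(t) \in N_K(\Delta \hat z(t) + g(t))$ for a.a.\ $t$; one also needs $\Delta \hat z(t) + g(t) \in K$ a.e., which follows from the closedness and convexity of $K$ together with $\Delta z_n(t) + g_n(t) \in K$ and the convergence. By uniqueness of the energetic solution (stated after \autoref{lem:evi}), $\hat z = \SS(g) = z$. Since the limit is independent of the subsequence, the whole sequence converges, establishing $z_n \weakly z$ in $H^1_\star(I;H_0^1(\Omega))$ and $z_n \to z$ in $C(\bar I;H_0^1(\Omega))$.

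The main obstacle is obtaining the \emph{strong} convergence $z_n \to z$ in $C(\bar I;H_0^1(\Omega))$ rather than merely in a weaker norm or pointwise in time. Weak convergence in $H^1(I;H_0^1(\Omega))$ plus Aubin–Lions only gives strong convergence in $C(\bar I;L^2(\Omega))$, and Helly's theorem only gives pointwise-in-$t$ weak convergence in $H_0^1(\Omega)$; neither alone yields the claimed uniform $H_0^1$-convergence. To close this gap one must exploit the energy equality \eqref{eq:orthogonality}: it gives precise control of $\norm{\dot z_n}_{L^2(I;H_0^1)}$, and combined with weak lower semicontinuity and the limit equation one can show $\norm{\dot z_n}_{L^2(I;H_0^1)} \to \norm{\dot z}_{L^2(I;H_0^1)}$, hence $\dot z_n \to \dot z$ strongly in $L^2(I;H_0^1(\Omega))$, which together with $z_n(0) = 0$ and the fundamental theorem of calculus forces $z_n \to z$ in $C(\bar I;H_0^1(\Omega))$. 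Verifying the norm convergence of the time-derivatives — i.e.\ no loss of energy in the limit — is the technical heart of the argument and is where the quadratic structure of $\EE$ and the orthogonality relation are indispensable.
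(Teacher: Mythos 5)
There is a genuine gap at the decisive step, the uniform convergence $z_n \to z$ in $C(\bar I; H_0^1(\Omega))$. Your first paragraph claims this follows from $z_n \to \hat z$ in $C(\bar I; L^2(\Omega))$ (Aubin--Lions) together with the $L^\infty(I;H_0^1(\Omega))$ bound ``and a standard interpolation/equicontinuity argument''; this is false --- interpolation only upgrades to $C(\bar I; H^s(\Omega))$ for $s<1$, never to the top space $H_0^1(\Omega)$, as you yourself concede in the last paragraph. Your fallback strategy there (use the energy equality \eqref{eq:orthogonality} to show $\norm{\dot z_n}_{L^2(I;H_0^1(\Omega))} \to \norm{\dot z}_{L^2(I;H_0^1(\Omega))}$, hence $\dot z_n \to \dot z$ strongly, hence uniform convergence by the fundamental theorem of calculus) does not close either: the identity \eqref{eq:orthogonality} reduces the question to the limit of $\int_I \dual{\dot z_n}{\dot g_n}\,\dt$, which is a product of two merely \emph{weakly} convergent sequences ($\dot g_n \weakly \dot g$ only, under the lemma's hypotheses), so the limsup argument breaks down. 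Indeed, this is precisely why the paper's \autoref{thm:convergence_in_H1} assumes \emph{strong} convergence of the controls in $H^1(I;H^{-1}(\Omega))$ before concluding strong $H^1$-convergence of the states; under weak convergence of $g_n$ the strong convergence of $\dot z_n$ you aim for is neither claimed by the lemma nor to be expected. In addition, your identification of the weak limit (passing to the limit in \eqref{eq:evi_dual} with arbitrary selections $v(t)\in K$) uses $\Delta z_n \to \Delta \hat z$ strongly in $L^2(I;H^{-1}(\Omega))$, i.e.\ exactly the unestablished strong $H_0^1$-convergence, so that step is circular as written; a minor further omission is the verification that $g \in \GG_0$ (via $g_n(0)\weakly g(0)$) so that $\SS(g)$ is defined at all.

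The paper avoids all of this by a direct Minty-type cross-testing with the known candidate $z=\SS(g)$ (no subsequence extraction, no limit identification): test \eqref{eq:evi_dual} for $z_n$ with the admissible element $\Delta z + g \in K$ and \eqref{eq:evi_dual} for $z$ with $\Delta z_n + g_n \in K$, add, and integrate over $(0,t)$ to get $\tfrac12\norm{z_n(t)-z(t)}_{H_0^1(\Omega)}^2 \le \int_0^t \dual{\dot z_n - \dot z}{g_n - g}\,\ds$. The first factor is bounded by the a-priori estimate \eqref{eq:energy_estimate}, and the second tends to zero because the compact embedding $H^1(I;L^2(\Omega)) \embeds L^2(I;H^{-1}(\Omega))$ is applied to the \emph{controls}, giving $g_n \to g$ strongly there. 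This yields $z_n \to z$ in $C(\bar I;H_0^1(\Omega))$ directly, and the weak convergence in $H^1_\star(I;H_0^1(\Omega))$ then follows from boundedness. If you want to salvage your write-up, replace the interpolation claim and the energy-equality strategy by this monotonicity argument; your correct observations (boundedness via \eqref{eq:energy_estimate}, strong $L^2(I;H^{-1}(\Omega))$-convergence of $g_n$ by compactness) are exactly the two ingredients it needs.
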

\begin{proof}
	The assumptions imply that $g_n(0)\rightharpoonup  g(0)$ in $H^{-1}(\Omega)$.
	Hence, $g(0)$ belongs to $\GG_0$, which makes $z=\SS(g)$ well-defined.
	Due to \eqref{eq:energy_estimate}, the sequence
	$\{z_n\}_{n \in \N}$ is bounded in $H^1_\star(I; H_0^1(\Omega))$.

	From \eqref{eq:evi_dual} we find
	for arbitrary $t \in \bar I$
	\begin{align*}
		\int_0^t
		\dual{\dot z_n}{(\Delta z + g) - (\Delta z_n + g_n)}_{H_0^1(\Omega), H^{-1}(\Omega)}
		\,\ds
		&\le 0
		,
		\\
		\int_0^t
		\dual{\dot z}{(\Delta z_n + g_n) - (\Delta z + g)}_{H_0^1(\Omega), H^{-1}(\Omega)}
		\,\ds
		&\le 0
		.
	\end{align*}
	Adding these inequalities yields
	\begin{equation*}
		\int_0^t
		\scalarprod{\nabla (\dot z_n - \dot z)}{\nabla (z_n - z)}_{L^2(\Omega)}
		\, \ds
		\le
		\int_0^t
		\dual{\dot z_n - \dot z}{g_n - g}_{H_0^1(\Omega), H^{-1}(\Omega)}
		\,\ds
		,
	\end{equation*}
	which gives
	\begin{align*}
		\frac12 \, \norm{z_n(t) - z(t)}_{H_0^1(\Omega)}^2
		&\le
		\int_0^t
		\dual{\dot z_n - \dot z}{g_n - g}_{H_0^1(\Omega), H^{-1}(\Omega)}
		\,\ds
		\\&\le
		\norm{\dot z_n - \dot z}_{L^2(I; H_0^1(\Omega))}
		\,
		\norm{g_n - g}_{L^2(I; H^{-1}(\Omega))}
		.
	\end{align*}
	Owing to \eqref{eq:energy_estimate}, we have
	\begin{equation*}
		\frac12 \, \norm{z_n(t) - z(t)}_{H_0^1(\Omega)}^2
		\le
		\bigh(){
			\norm{g_n}_{H^1(I; H^{-1}(\Omega))}
			+
			\norm{g}_{H^1(I; H^{-1}(\Omega))}
		}
		\,
		\norm{g_n - g}_{L^2(I; H^{-1}(\Omega))}
		.
	\end{equation*}
	Due to the compact embedding $H^1(I; L^2(\Omega)) \hookrightarrow L^2(I; H^{-1}(\Omega))$,
	we can pass to the limit to obtain the convergence $z_n \to z$ in $C(\bar I; H_0^1(\Omega))$.
	Since $\{z_n\}_{n \in \N}$ is bounded in $H^1_\star(I; H_0^1(\Omega))$,
	the weak convergence $z_n\weakly z$ in $H^1_\star(I; H_0^1(\Omega))$ follows.
\end{proof}

Now, we are in the position to prove the existence of solutions of \eqref{eq:optimal_control_problem}.
\begin{lemma}
	\label{lem:existence}
	There exists a (global) optimal control of \eqref{eq:optimal_control_problem}.
\end{lemma}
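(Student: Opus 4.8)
The plan is to use the direct method of the calculus of variations. First I would argue that the feasible set of \eqref{eq:optimal_control_problem} is nonempty: taking $g \equiv 0$ satisfies $g(0) = 0$ and $0 \in \GG_0$, and the associated state $z = \SS(0)$ (in fact $z \equiv 0$) gives a feasible pair, so the infimum $m := \inf J$ is well-defined. Since $j_1$ and $j_2$ are bounded from below and $\frac12\|g\|_{H^1(I;L^2(\Omega))}^2 \ge 0$, we have $m > -\infty$. Hence there exists a minimizing sequence $\{g_n\}_{n\in\N} \subset H^1(I;L^2(\Omega))$ with $g_n(0) = 0$, $g_n \in \GG_0$, and $z_n := \SS(g_n)$ such that $J(z_n,g_n) \to m$.

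Next I would extract weak limits. From $J(z_n,g_n) \to m$ and the lower bounds on $j_1, j_2$, the term $\frac12\|g_n\|_{H^1(I;L^2(\Omega))}^2$ stays bounded, so $\{g_n\}$ is bounded in the Hilbert space $H^1(I;L^2(\Omega))$. Passing to a subsequence (not relabeled), $g_n \weakly \bar g$ in $H^1(I;L^2(\Omega))$. The constraint $g(0) = 0$ is preserved under weak convergence since the trace map $H^1(I;L^2(\Omega)) \to L^2(\Omega)$, $g \mapsto g(0)$ is linear and bounded, hence weakly continuous; thus $\bar g(0) = 0$, and in particular $\bar g \in \GG_0$. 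Now \autoref{lem:weak_continuity} applies: $z_n := \SS(g_n) \weakly \SS(\bar g) =: \bar z$ in $H^1_\star(I;H_0^1(\Omega))$ and $z_n \to \bar z$ in $C(\bar I; H_0^1(\Omega))$. In particular $z_n \to \bar z$ in $L^2(I;H_0^1(\Omega))$ and $z_n(T) \to \bar z(T)$ in $H_0^1(\Omega)$, so $(\bar z,\bar g)$ is feasible for \eqref{eq:optimal_control_problem}.

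Finally I would pass to the limit in the objective. Since $j_1$ is continuously Fréchet differentiable on $L^2(I;H_0^1(\Omega))$, it is continuous there, so $j_1(z_n) \to j_1(\bar z)$; likewise $j_2(z_n(T)) \to j_2(\bar z(T))$ by continuity of $j_2$ on $H_0^1(\Omega)$. The remaining term $\frac12\|\cdot\|_{H^1(I;L^2(\Omega))}^2$ is convex and continuous, hence weakly lower semicontinuous, so $\liminf_n \frac12\|g_n\|_{H^1(I;L^2(\Omega))}^2 \ge \frac12\|\bar g\|_{H^1(I;L^2(\Omega))}^2$. Combining,
\begin{equation*}
	J(\bar z, \bar g)
	=
	j_1(\bar z) + j_2(\bar z(T)) + \tfrac12\|\bar g\|_{H^1(I;L^2(\Omega))}^2
	\le
	\liminf_{n\to\infty} J(z_n, g_n)
	=
	m,
\end{equation*}
and since $(\bar z,\bar g)$ is feasible, $J(\bar z,\bar g) = m$, so $(\bar z,\bar g)$ is a global optimal control. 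The only genuinely non-routine ingredient is the weak continuity of the control-to-state map, but that is precisely \autoref{lem:weak_continuity}; everything else is the standard lower-semicontinuity/compactness bookkeeping, so I do not expect a real obstacle here.
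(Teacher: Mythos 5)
Your proposal is correct and follows essentially the same route as the paper: a minimizing sequence, boundedness of the controls via the $H^1(I;L^2(\Omega))$-regularization term, weak subsequential convergence, \autoref{lem:weak_continuity} for the states, and weak lower semicontinuity of $J$. The extra bookkeeping you supply (nonemptiness of the feasible set, weak continuity of the trace $g \mapsto g(0)$, splitting the lower semicontinuity of $J$ into its three terms) is all consistent with the paper's more condensed argument.
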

The proof is standard, but included for the reader's convenience.
\begin{proof}
	We denote by $j$ the infimal value
	of the optimal control problem
	and by $\{(z_n, g_n)\}_{n \in \N}$
	a minimizing sequence.
	By the boundedness
	of $\{g_n\}_{n \in \N}$ in $H^1(I; L^2(\Omega))$
	we obtain the weak convergence of a subsequence (without relabeling) in
	$H^1(I; L^2(\Omega))$
	towards $\bar g$.

	Now, we have $z_n = \SS(g_n) \to \SS(\bar g)$ in $C(\bar I; H_0^1(\Omega))$
	and $z_n(T) \to z(T)$ in $H_0^1(\Omega)$
	due to \autoref{lem:weak_continuity}.
	This implies
	\begin{equation*}
		J(\bar z, \bar g)
		\le
		\liminf_{n \to \infty} J(z_n, g_n)
		=
		j.
	\end{equation*}
	Hence, $(\bar z, \bar g)$ is globally optimal for \eqref{eq:optimal_control_problem}.
\end{proof}

\section{Regularized optimal control problem}
\label{sec:regularized}

In this section,
we study the regularized optimal control problem.

\subsection{Regularized dissipation}
\label{subsec:regularized_dissipation}
For given parameter $\rho > 0$, let us define the
regularized dissipation by
\begin{equation}
	\label{eq:regularized_dissipation}
	\DD_\rho : H_0^1(\Omega) \to \R,
	\quad
	\DD_\rho(\dot z)
	:=
	\int_\Omega \absrho{\dot z} + \frac\rho2 \, \abs{\nabla \dot z}^2 \, \dx.
\end{equation}
Note that the additional quadratic term in $\DD_\rho$ will add some
viscosity to our state equation.
In the regularization \eqref{eq:regularized_dissipation},
$\absrho{\cdot}$
is a regularized version of the modulus function $\abs{\cdot} : \R \to \R$
satisfying the following assumption:
\begin{assumption}
	\label{asm:regularized_absolute_value}
	The family $\{\absrho{\cdot}\}_{\rho > 0}$ satisfies
	\begin{enumerate}
		\item\label{absrho_1}
			$\absrho{ \cdot }$ is $C^{2,1}(\R,\R)$ and convex,
		\item\label{absrho_2}
			$\absrho{v} = \absrho{-v}$ for all $v \in \R$,
		\item\label{absrho_3}
			$\absrho{ v } = \abs{v}$
			for all $v \in \R$ with $\abs{v} \ge \rho$, and
		\item\label{absrho_4}
			$\absrho{v}'' \le \frac2\rho$ for all $v \in \R$.
	\end{enumerate}
\end{assumption}
Note that this assumption implies
\begin{equation*}
	\absrho{v}' \in [-1,1],
	\quad\text{and}\quad
	\absrho{v}'' \ge 0
	\quad \forall v\in \R
\end{equation*}
by convexity of $\absrho{\cdot}$.

\begin{lemma}\label{lem:absrho}
Let $\absrho{\cdot}$ satisfy \autoref{asm:regularized_absolute_value}.
 Then it holds
\begin{equation}\label{eqabsrhol1}
 |v| \le \absrho{v}\le |v|+\rho \quad \text{ and } \quad \absrho{v}' \, v \ge |v|-\rho  \quad \forall v\in \R.
\end{equation}
\end{lemma}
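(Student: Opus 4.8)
The plan is to establish the three inequalities in \eqref{eqabsrhol1} one after the other, using nothing beyond convexity of $\absrho{\cdot}$, its symmetry, the exact identification $\absrho{v}=\abs v$ for $\abs v\ge\rho$, and the curvature bound $\absrho{v}''\le 2/\rho$, all of which are provided by \autoref{asm:regularized_absolute_value}. The only step needing a little care is the upper bound close to $v=0$; the rest is routine convex analysis.

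\emph{Lower bound $\abs v\le\absrho{v}$.} Since $\absrho{\cdot}$ is convex and $C^1$, every tangent lies below its graph, i.e.\ $\absrho{v}\ge\absrho{w}+\absrho{w}'\,(v-w)$ for all $v,w\in\R$. I would evaluate this at $w=\rho$: the identification property together with $C^1$-regularity forces $\absrho{\rho}=\rho$ and $\absrho{\rho}'=1$, so $\absrho{v}\ge v$. Using the symmetry $\absrho{v}=\absrho{-v}$ (or, equivalently, taking $w=-\rho$) one likewise gets $\absrho{v}\ge-v$, hence $\absrho{v}\ge\abs v$.

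\emph{Upper bound $\absrho{v}\le\abs v+\rho$.} By symmetry it is enough to treat $v\ge 0$, and for $v\ge\rho$ the claim is immediate. For $0\le v\le\rho$ I would expand $\absrho{\cdot}$ by Taylor's formula with integral remainder based at $\rho$, which is legitimate because $\absrho{\cdot}\in C^{2,1}$:
\begin{align*}
	\absrho{v}
	&= \absrho{\rho} + \absrho{\rho}'\,(v-\rho) + \int_\rho^v (v-s)\,\absrho{s}''\,\ds \\
	&= v + \int_v^\rho (s-v)\,\absrho{s}''\,\ds .
\end{align*}
The remainder is nonnegative because $\absrho{\cdot}''\ge 0$, and it is at most $\frac2\rho\int_v^\rho (s-v)\,\ds = (\rho-v)^2/\rho \le \rho$ by the curvature bound; hence $\absrho{v}\le v+\rho$. (Incidentally this re-proves $\absrho{v}\ge v$ for $v\ge0$ as well.)

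\emph{The estimate $\absrho{v}'\,v\ge\abs v-\rho$.} Here I would use convexity once more, now comparing the tangent at $v$ with the value at the origin: $\absrho{0}\ge\absrho{v}+\absrho{v}'\,(0-v)$, that is, $\absrho{v}'\,v\ge\absrho{v}-\absrho{0}$. Inserting $\absrho{v}\ge\abs v$ from the first step and $\absrho{0}\le\rho$ (the upper bound evaluated at $v=0$) yields the claim. I expect the upper bound to be the only real obstacle: estimates relying solely on $\absrho{\cdot}'\in[-1,1]$ are too weak near $v=0$, so the second-derivative bound $\absrho{\cdot}''\le2/\rho$ is genuinely needed, and it is cleanest to exploit it through a Taylor expansion anchored at $\rho$, where $\absrho{\cdot}$ and $\absrho{\cdot}'$ take known values.
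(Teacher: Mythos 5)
Your proof is correct, but it takes a partly different route from the paper's. For the lower bound you use the tangent inequalities at $\pm\rho$ (where $\absrho{\pm\rho}=\rho$ and, by $C^1$-regularity and the exactness property $\absrho{v}=\abs{v}$ for $\abs{v}\ge\rho$, $\absrho{\pm\rho}'=\pm1$), which is essentially the paper's ``convexity plus exactness'' argument spelled out. The real divergences are the other two estimates. For the upper bound you invoke the curvature bound $\absrho{\cdot}''\le 2/\rho$ through a Taylor expansion anchored at $\rho$; this works, but your closing remark that this bound is ``genuinely needed'' is not right: on $[-\rho,\rho]$ convexity alone gives $\absrho{v}\le\max\bigl(\absrho{-\rho},\absrho{\rho}\bigr)=\rho\le\abs{v}+\rho$, which is exactly the elementary argument the paper alludes to (what is too weak is only the crude estimate via $\absrho{\cdot}'\in[-1,1]$ without using monotonicity of the derivative). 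For the third inequality you use the subgradient inequality at $v$ compared with $0$, namely $\absrho{v}'\,v\ge\absrho{v}-\absrho{0}$, and then feed in the two bounds already proved; the paper instead splits off $\abs{v}\ge\rho$ (where the claim is immediate) and on $[-\rho,\rho]$ uses monotonicity of $\absrho{\cdot}'$ together with $\absrho{0}'=0$ from symmetry to get $\absrho{v}'\,v\ge0\ge\abs{v}-\rho$. Your version buys a uniform, case-free argument at the price of depending on the first two estimates, while the paper's is independent of them; both are equally rigorous.
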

\begin{proof}
 \crefname{enumi}{Property}{Properties}
 The first inequality follows from convexity and \autoref{absrho_3}. The second inequality obviously holds for
 $|v|\ge \rho$ due to \autoref{absrho_3}. Now let $v \in [-\rho,\rho]$ be given.
 Using the monotonicity of $\absrho{\cdot}'$ due to \autoref{absrho_1},
 we have
 \begin{equation*}
	 \absrho{v}' \, v
	 =
	 \absrho{v}' \, (v - 0)
	 \ge
	 \absrho{0}' \, (v - 0)
	 =
	 0
	 \ge
	 \abs{v} - \rho,
 \end{equation*}
 since $\absrho{0}' = 0$ follows from \autoref{absrho_2}.
\end{proof}

Let us remark that
\autoref{asm:regularized_absolute_value} is satisfied, e.g., by
\begin{equation*}
	\absrho{v}'' := 2\, \rho^{-2} \, \max\bigh(){\rho - \abs{v}, 0},
	\quad
	\absrho{v}' := \int_0^v \absrho{s}'' \, \ds,
	\quad
	\absrho{v} := \rho + \int_{-\rho}^v \absrho{s}' \, \ds.
\end{equation*}

\subsection{Regularized state equation}
\label{subsec:regularized_state}
Let us now discuss the regularized state equation.
In particular,
we will prove the differentiability of the solution map $\SS_\rho$
and show a-priori stability results.

We recall the regularized problem \eqref{eq:state_equation_reg}
\begin{equation*}
	0 = \partial_{\dot z} \DD_\rho(\dot z) + \partial_z \EE(z,g)
	\quad
	\text{in } H^{-1}(\Omega), \text{ a.e.\ on } I, \quad z(0)=0.
\end{equation*}
By using the differentiability of $\absrho{\cdot}$, we obtain
the equivalent formulation
\begin{equation}
	\label{eq:regularized_state_eq}
	\absrho{\dot z}' - \rho \, \Delta \dot z - \Delta z = g
	\quad
	\text{in } H^{-1}(\Omega), \text{ a.e.\ on } I.
\end{equation}
This equation can be written as the system
\begin{subequations}
	\label{eq:regularized_system}
	\begin{align}
		\label{eq:regularized_system_1}
		\dot z &=
		\mrep{w}{\Delta z + g}
		\quad
		\text{in } \mrep{H_0^1(\Omega),}{H^{-1}(\Omega),}
		\quad\text{a.e.\ on } I
		,
		\\
		\label{eq:regularized_system_2}
		- \rho \, \Delta w + \absrho{w}'
		&=
		\Delta z + g
		\quad
		\text{in } H^{-1}(\Omega),
		\quad\text{a.e.\ on } I
		,
	\end{align}
\end{subequations}
equipped with the initial condition $z(0) = 0$.
In order to discuss the solvability of \eqref{eq:regularized_system},
we first analyze the semilinear equation
\begin{equation}
	\label{eq:semilinear_rho}
	- \rho \, \Delta w + \absrho{w}' = v\quad \text{in } H^{-1}(\Omega).
\end{equation}
Due to the monotonicity of $\absrho{\cdot}'$, this equation has a unique weak solution $w\in H^1_0(\Omega)$ for all $v\in H^{-1}(\Omega)$.
Moreover, the solution depends Lipschitz continuously on the right-hand side.
Let us denote by
$T_\rho := (- \rho \, \Delta + \absrho{\cdot}')^{-1}$
the associated solution  mapping,
which is globally Lipschitz continuous from $H^{-1}(\Omega)$ to $H_0^1(\Omega)$ for fixed, positive $\rho$.

Using this mapping, equation \eqref{eq:regularized_system} can be written as
\begin{equation}
	\label{eq:regularized_state_as_ODE}
	\dot z
	=
	T_\rho
	(g + \Delta z)
	\quad
	\text{in } H_0^1(\Omega), \text{ a.e.\ on } I,
\end{equation}
which is an ODE in $H_0^1(\Omega)$. Due to the global Lipschitz continuity of $T_\rho$, we have the following classical result.

\begin{theorem}
Let $\rho>0$ be given.
 For each $g\in L^2(I; H^{-1}(\Omega))$, there exists a unique solution $z\in H^1_\star(I;H^1_0(\Omega))$
 of the regularized state equation \eqref{eq:state_equation_reg}.
 The mapping $\SS_\rho$, which maps $g$ to $z$, is continuous with respect to these spaces.
\end{theorem}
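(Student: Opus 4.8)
The plan is to build on the reformulation \eqref{eq:regularized_state_as_ODE} of the regularized state equation as the ODE $\dot z = T_\rho(g + \Delta z)$ in $H_0^1(\Omega)$ with $z(0) = 0$, and to solve it by a contraction argument in $C(\bar I; H_0^1(\Omega))$. First I would record that $T_\rho(0) = 0$: the function $w = 0$ solves \eqref{eq:semilinear_rho} with $v = 0$ because $\absrho{0}' = 0$ by \autoref{absrho_2}. Writing $L_\rho$ for the global Lipschitz constant of $T_\rho : H^{-1}(\Omega) \to H_0^1(\Omega)$ and $\norm{\Delta}$ for the operator norm of $\Delta : H_0^1(\Omega) \to H^{-1}(\Omega)$, this yields, for every $v \in H_0^1(\Omega)$ and a.a.\ $s \in I$, the bound
\begin{equation*}
	\norm{T_\rho(g(s) + \Delta v)}_{H_0^1(\Omega)} \le L_\rho \, \bigl( \norm{g(s)}_{H^{-1}(\Omega)} + \norm{\Delta} \, \norm{v}_{H_0^1(\Omega)} \bigr).
\end{equation*}

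Next I would introduce the map $\Phi$ on $C(\bar I; H_0^1(\Omega))$ defined by $(\Phi z)(t) := \int_0^t T_\rho(g(s) + \Delta z(s)) \, \ds$. For $z \in C(\bar I; H_0^1(\Omega))$ the integrand is strongly measurable, being the composition of the strongly measurable map $s \mapsto g(s) + \Delta z(s)$ with the continuous map $T_\rho$, and by the displayed bound it is dominated in $H_0^1(\Omega)$ by an $L^2(I)$-function; hence $\Phi z$ is well-defined, lies in $C(\bar I; H_0^1(\Omega))$ and satisfies $(\Phi z)(0) = 0$. A fixed point of $\Phi$ is exactly a solution of \eqref{eq:regularized_state_as_ODE}, hence of \eqref{eq:state_equation_reg}. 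To produce one I would endow $C(\bar I; H_0^1(\Omega))$ with the Bielecki-type norm $\norm{z}_\lambda := \sup_{t \in \bar I} e^{-\lambda t} \norm{z(t)}_{H_0^1(\Omega)}$; using the Lipschitz property of $T_\rho$ one obtains $\norm{\Phi z_1 - \Phi z_2}_\lambda \le (L_\rho \norm{\Delta} / \lambda) \, \norm{z_1 - z_2}_\lambda$, so that $\Phi$ is a contraction once $\lambda > L_\rho \norm{\Delta}$. Banach's fixed-point theorem then gives a unique $z \in C(\bar I; H_0^1(\Omega))$ with $z = \Phi z$. Such a $z$ is absolutely continuous with $\dot z(s) = T_\rho(g(s) + \Delta z(s))$ for a.a.\ $s$, and the displayed bound shows $\dot z \in L^2(I; H_0^1(\Omega))$, hence $z \in H^1_\star(I; H_0^1(\Omega))$. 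Uniqueness in $H^1_\star(I; H_0^1(\Omega))$ follows since every such solution satisfies $z = \Phi z$ after integration in time.

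It remains to show the (in fact Lipschitz) continuity of $\SS_\rho$. Given $g_1, g_2 \in L^2(I; H^{-1}(\Omega))$ and $z_i := \SS_\rho(g_i)$, subtracting the two equations and using the Lipschitz property of $T_\rho$ yields $\norm{\dot z_1(s) - \dot z_2(s)}_{H_0^1(\Omega)} \le L_\rho \, ( \norm{g_1(s) - g_2(s)}_{H^{-1}(\Omega)} + \norm{\Delta} \, \norm{z_1(s) - z_2(s)}_{H_0^1(\Omega)} )$ for a.a.\ $s$. Integrating in time and applying Gronwall's inequality gives $\norm{z_1 - z_2}_{C(\bar I; H_0^1(\Omega))} \le C_\rho \, \norm{g_1 - g_2}_{L^2(I; H^{-1}(\Omega))}$, and then the same pointwise estimate controls $\norm{\dot z_1 - \dot z_2}_{L^2(I; H_0^1(\Omega))}$; hence $\SS_\rho$ is Lipschitz, in particular continuous, from $L^2(I; H^{-1}(\Omega))$ to $H^1_\star(I; H_0^1(\Omega))$.

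I do not expect a genuine obstacle here: once \eqref{eq:regularized_state_as_ODE} is at hand, the statement reduces to the classical Carathéodory/Picard theory for ODEs with a globally Lipschitz right-hand side in the Hilbert space $H_0^1(\Omega)$, and one could equally well quote an abstract existence theorem instead of spelling out the contraction. The only points requiring some care are the strong measurability of $s \mapsto T_\rho(g(s) + \Delta z(s))$ and the fact that the whole argument is for fixed $\rho > 0$; the constants $L_\rho$ and $C_\rho$ degenerate as $\rho \searrow 0$, which is harmless at this stage but becomes the crux of the later limit analysis.
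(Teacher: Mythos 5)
Your argument is correct and follows essentially the same route as the paper: both rest on the reformulation \eqref{eq:regularized_state_as_ODE} as an ODE in $H_0^1(\Omega)$ with the globally Lipschitz right-hand side $T_\rho$, the only difference being that the paper simply cites the abstract existence and continuity result of \cite[Satz 1.3, p.~166]{GajewskiGroegerZacharias}, whereas you spell out the Picard--Bielecki contraction and Gronwall estimates yourself. That is the classical proof of the cited theorem, so there is nothing genuinely different to compare.
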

\begin{proof}
The result follows directly from \cite[Satz 1.3, p. 166]{GajewskiGroegerZacharias}.
\end{proof}

In the next step, we will investigate the differentiability of $\SS_\rho$.
Due to the properties of $\absrho{\cdot}'$, the operator $T_\rho$ is Fréchet differentiable from $H^{-1}(\Omega)$ to $H^1_0(\Omega)$.
Let $v,h\in H^{-1}(\Omega)$ be given with $w = T_\rho(v)$.
By standard arguments it can be proven that $y = T_\rho'(v) \, h$ is given as the unique weak solution of
the equation
\begin{equation}
	- \rho \, \Delta y + \absrho{w}'' \, y = h.
\end{equation}
Moreover due to $\absrho{w}''\ge0$, we can bound the norm of $T_\rho'(v)$ uniformly with respect to $v$ by
\[
 \|y\|_{H^1_0(\Omega) } = \|T_\rho'(v)\,h\|_{H^1_0(\Omega) } \le \rho^{-1/2} \, \|h\|_{H^{-1}(\Omega)}.
\]
Hence, the linearized ODE
\[
 \dot \zeta = T_\rho'( g+\Delta z)(h+\Delta\zeta)
	\quad
	\text{in } H_0^1(\Omega), \text{ a.e.\ on } I
\]
with the initial condition $\zeta(0) = 0$
is uniquely solvable provided $g\in L^2(I;H^{-1}(\Omega))$, $z=\SS_\rho(g)$, and $h\in L^2(I;H^{-1}(\Omega))$,
see again \cite{GajewskiGroegerZacharias}.
Summarizing these arguments leads to
the following differentiability result.
\begin{theorem}
	\label{thm:differentiability}
Let $\rho>0$ be given.
	The regularized control-to-state map $\SS_\rho$
	is Fréchet differentiable
	from
	$L^2(I; H^{-1}(\Omega))$
	to
	$H^1_\star(I;H^1_0(\Omega))$.
	The directional derivative
	$\zeta = \SS_\rho'(g) \, h$
	satisfies the system
	\begin{subequations}
		\label{eq:regularized_linearized_system}
		\begin{align}
			\label{eq:regularized_linearized_system_1}
			\dot \zeta &=
			\mrep{\omega}{\Delta \zeta + g}
			\quad
			\text{in } H_0^1(\Omega)
			, \text{ a.e.\ on } I,
			\\
			\label{eq:regularized_linearized_system_2}
			- \rho \, \Delta \omega + \absrho{\dot z}'' \, \omega
			&=
			\Delta \zeta + h
			\quad
			\text{in } H^{-1}(\Omega), \text{ a.e.\ on } I,
			\\
			\zeta(0) &= 0,
		\end{align}
	\end{subequations}
	where $z$ is given by $z=\SS_\rho(g)$.
\end{theorem}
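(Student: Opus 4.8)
The plan is to reduce everything to the ODE reformulation \eqref{eq:regularized_state_as_ODE}, i.e.\ $\dot z = F(z,g)$ with $z(0)=0$, where $F : H_0^1(\Omega) \times H^{-1}(\Omega) \to H_0^1(\Omega)$, $F(z,v) := T_\rho(\Delta z + v)$, and to transfer differentiability from $F$ to the solution operator. Since $\Delta$ is an isometry from $H_0^1(\Omega)$ onto $H^{-1}(\Omega)$ and $T_\rho$ is Fréchet differentiable with $\|T_\rho'(v)\|_{H^{-1}(\Omega)\to H_0^1(\Omega)} \le \rho^{-1/2}$ uniformly in $v$, the partial derivatives $F_z(z,v) = T_\rho'(\Delta z + v)\,\Delta$ and $F_v(z,v) = T_\rho'(\Delta z + v)$ exist and are bounded by $\rho^{-1/2}$, uniformly in $(z,v)$. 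The candidate for $\SS_\rho'(g)h$ is the function $\zeta$ solving the linear ODE $\dot\zeta = F_z(z,g)\zeta + F_v(z,g)h$, $\zeta(0)=0$, with $z = \SS_\rho(g)$, whose unique solvability in $H^1_\star(I;H_0^1(\Omega))$ was already noted; setting $\omega := \dot\zeta = T_\rho'(\Delta z + g)(\Delta\zeta + h)$ and using $T_\rho(\Delta z + g) = \dot z$ shows that $\omega$ solves $-\rho\Delta\omega + \absrho{\dot z}''\,\omega = \Delta\zeta + h$, so the pair $(\zeta,\omega)$ is exactly a solution of \eqref{eq:regularized_linearized_system}. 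It thus remains only to prove the differentiability claim itself.

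First I would upgrade $T_\rho$, hence $F$, to a $C^1$ map. Given $v_1,v_2$ with $w_i := T_\rho(v_i)$ and $h \in H^{-1}(\Omega)$, the functions $y_i := T_\rho'(v_i)h$ solve $-\rho\Delta y_i + \absrho{w_i}''\,y_i = h$, so their difference solves $-\rho\Delta(y_1-y_2) + \absrho{w_1}''(y_1-y_2) = (\absrho{w_2}'' - \absrho{w_1}'')\,y_2$; testing with $y_1-y_2$, using $\absrho{\cdot}'' \ge 0$, the Lipschitz continuity of $\absrho{\cdot}''$ (provided by the $C^{2,1}$-regularity in \autoref{asm:regularized_absolute_value}), a Sobolev embedding for the product on the right-hand side, and the Lipschitz continuity of $T_\rho$, yields $\|T_\rho'(v_1) - T_\rho'(v_2)\|_{H^{-1}(\Omega)\to H_0^1(\Omega)} \le C_\rho\,\|v_1-v_2\|_{H^{-1}(\Omega)}$ on bounded sets. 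Then, with $z_h := \SS_\rho(g+h)$ and $r_h := z_h - z - \zeta \in H^1_\star(I;H_0^1(\Omega))$, the fundamental theorem of calculus for $F$ along the segment joining $(z(t),g(t))$ to $(z_h(t),g(t)+h(t))$ gives $\dot r_h = F_z(z,g)\,r_h + A_h$, $r_h(0)=0$, with $\|A_h(t)\|_{H_0^1(\Omega)}$ bounded by the local modulus of continuity of $F'$ at $(z(t),g(t))$, evaluated at $\|z_h(t)-z(t)\|_{H_0^1(\Omega)} + \|h(t)\|_{H^{-1}(\Omega)}$, times that same quantity. A Gronwall estimate on \eqref{eq:regularized_state_as_ODE}, using the Lipschitz continuity of $T_\rho$, gives $\|z_h - z\|_{C(\bar I;H_0^1(\Omega))} \le C\,\|h\|_{L^2(I;H^{-1}(\Omega))}$; splitting $I$ according to whether $\|h(t)\|_{H^{-1}(\Omega)}$ is small and using the continuity of $F'$ then yields $\|A_h\|_{L^2(I;H_0^1(\Omega))} = o(\|h\|_{L^2(I;H^{-1}(\Omega))})$. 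Since $\|F_z(z(t),g(t))\|_{H_0^1(\Omega)\to H_0^1(\Omega)} \le \rho^{-1/2}$, Gronwall's inequality gives $\|r_h\|_{C(\bar I;H_0^1(\Omega))} = o(\|h\|_{L^2(I;H^{-1}(\Omega))})$, and reinserting this into the equation for $\dot r_h$ yields the same bound for $\|\dot r_h\|_{L^2(I;H_0^1(\Omega))}$. Hence $\SS_\rho$ is Fréchet differentiable from $L^2(I;H^{-1}(\Omega))$ to $H^1_\star(I;H_0^1(\Omega))$ with $\SS_\rho'(g)h = \zeta$.

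I expect the only genuinely non-routine step to be the $C^1$-regularity of $F$, i.e.\ the continuity of $v \mapsto T_\rho'(v)$: mere Fréchet differentiability of $T_\rho$ would not by itself force the solution operator to be differentiable, and the estimate for $T_\rho'(v_1) - T_\rho'(v_2)$ is precisely where one must absorb the product $(\absrho{w_1}'' - \absrho{w_2}'')\,y_2$, using the $C^{2,1}$-hypothesis on $\absrho{\cdot}$ together with a Sobolev embedding — the one place where dimension-dependent care (or elliptic regularity for $w_i$) enters. Once $F$ is known to be $C^1$ with globally bounded, locally Lipschitz derivatives, the differentiability of the solution operator of $\dot z = F(z,g)$ with respect to the parameter $g$, together with the variational equation, is a classical fact for ODEs in Banach spaces, cf.\ \cite{GajewskiGroegerZacharias}; the Gronwall argument above merely makes it explicit, the remaining subtlety being the pointwise-in-time handling of the modulus of continuity of $F'$ when showing $\|A_h\|_{L^2(I;H_0^1(\Omega))} = o(\|h\|)$.
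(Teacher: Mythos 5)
Your architecture is the same as the paper's: rewrite the regularized state equation as the Banach-space ODE \eqref{eq:regularized_state_as_ODE}, differentiate $T_\rho$ and use the uniform bound $\norm{T_\rho'(v)}\le\rho^{-1/2}$, identify the candidate derivative as the solution of the linearized ODE (your observation that setting $\omega:=\dot\zeta$ recovers \eqref{eq:regularized_linearized_system} is exactly right), and conclude by differentiability of the ODE solution map with respect to the data. The paper compresses the last step into the phrase ``summarizing these arguments'' with a reference to \cite{GajewskiGroegerZacharias}, whereas you make it explicit via a Gronwall/remainder argument; your Lipschitz estimate for $v\mapsto T_\rho'(v)$ is a reasonable way of filling in the paper's ``by standard arguments'', and your caveat about the triple product $(\absrho{w_1}''-\absrho{w_2}'')\,y_2\,(y_1-y_2)$ is justified (note that on a Lipschitz domain with $H^{-1}$ data there is no elliptic regularity for $w_i$ to fall back on, so this step really is dimension-restricted as written).

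The genuine gap is the claim $\norm{A_h}_{L^2(I;H_0^1(\Omega))}=o\bigl(\norm{h}_{L^2(I;H^{-1}(\Omega))}\bigr)$ ``by splitting $I$''. Pointwise in $t$ you only have $\norm{A_h(t)}_{H_0^1(\Omega)}\le C\min\bigl\{(\norm{z_h(t)-z(t)}_{H_0^1(\Omega)}+\norm{h(t)}_{H^{-1}(\Omega)})^2,\ \norm{z_h(t)-z(t)}_{H_0^1(\Omega)}+\norm{h(t)}_{H^{-1}(\Omega)}\bigr\}$; on the portion of $I$ where $\norm{h(t)}_{H^{-1}(\Omega)}$ is not small the quadratic bound is useless and the linear bound only returns $O(\norm{h}_{L^2})$, and the smallness of the measure of that set does not help, because the $L^2$-in-time mass of $h$ may concentrate there. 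Concretely, for $h_n=a_n\chi_{E_n}k$ with $a_n\to\infty$ and $a_n^2\,\abs{E_n}\to0$ one has, on $E_n$, $T_\rho(V+a_nk)-T_\rho(V)-T_\rho'(V)(a_nk)=a_n\bigl[(-\rho\Delta)^{-1}-(-\rho\Delta+\absrho{\dot z}'')^{-1}\bigr]k+O(1)$, which is genuinely of order $a_n$ wherever $\absrho{\dot z}''\neq0$, so $\norm{A_{h_n}}_{L^2(I;H_0^1(\Omega))}$ stays comparable to $\norm{h_n}_{L^2(I;H^{-1}(\Omega))}$: this is the familiar absence of a norm gap, here in the time variable, and no splitting of $I$ can repair it. What your argument does establish is $\norm{A_h}_{L^1(I;H_0^1(\Omega))}\le C\norm{h}_{L^2(I;H^{-1}(\Omega))}^2$ and hence, via Gronwall, Fréchet differentiability into $C(\bar I;H_0^1(\Omega))$. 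To obtain the full $H^1_\star(I;H_0^1(\Omega))$ statement along your lines you must measure the perturbation in a norm controlling $\sup_t\norm{h(t)}_{H^{-1}(\Omega)}$, e.g.\ $L^\infty(I;H^{-1}(\Omega))$ or $H^1(I;L^2(\Omega))$ — the space in which the increments actually live when the theorem is invoked in \autoref{thm:regularized_optimality}; then the quadratic pointwise bound applies on all of $I$ and the required $o$-estimate for $\dot r_h$ in $L^2(I;H_0^1(\Omega))$ follows. In other words, your proof breaks exactly at the one point the paper leaves implicit, and as written the estimate you assert there cannot be salvaged in the stated norms.
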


Now, we show a regularized counterpart to the Lipschitz continuity of $\SS$, cf.\ \autoref{lem:mapping_properties_control_state}.
\begin{lemma}
	\label{lem:uniform_lipschitz_regularized}
	Let $g_1, g_2 \in W^{1,1}(I; H^{-1}(\Omega))$ and $\rho > 0$ be given.
	Then it holds
	\begin{equation*}
		\norm{ \SS_\rho(g_2) - \SS_\rho(g_1) }_{C(\bar I;H_0^1(\Omega))}
		\le
		C \, \norm{ g_2 - g_1 }_{W^{1,1}(I;H^{-1}(\Omega))}
	\end{equation*}
	with $C>0$ solely depending on $T$.
\end{lemma}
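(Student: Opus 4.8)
The plan is to establish the Lipschitz estimate by a Gronwall argument applied to the difference of the two states, working in the stronger norm induced by the viscosity. Write $z_i = \SS_\rho(g_i)$, $i=1,2$, and set $z := z_2 - z_1$, $g := g_2 - g_1$. Subtracting the two copies of the regularized state equation \eqref{eq:regularized_state_eq}, we get
\begin{equation*}
	\absrho{\dot z_2}' - \absrho{\dot z_1}' - \rho \, \Delta \dot z - \Delta z = g
	\quad\text{in } H^{-1}(\Omega), \text{ a.e.\ on } I.
\end{equation*}
First I would test this with $\dot z(t) \in H_0^1(\Omega)$. The key structural observation is that the term coming from the regularized dissipation is monotone: by convexity of $\absrho{\cdot}$ (\autoref{asm:regularized_absolute_value}\,\ref{absrho_1}) we have, pointwise in $x$, $(\absrho{\dot z_2}' - \absrho{\dot z_1}')(\dot z_2 - \dot z_1) \ge 0$, hence
\begin{equation*}
	\int_\Omega (\absrho{\dot z_2}' - \absrho{\dot z_1}') \, \dot z \, \dx \ge 0.
\end{equation*}
The viscosity term gives $-\langle \Delta \dot z, \dot z\rangle = \norm{\dot z}_{H_0^1(\Omega)}^2$, and the elliptic term gives $-\langle \Delta z, \dot z\rangle = \scalarprod{z}{\dot z}_{H_0^1(\Omega)} = \frac12 \frac{\d}{\dt}\norm{z}_{H_0^1(\Omega)}^2$. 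Therefore
\begin{equation*}
	\frac12 \frac{\d}{\dt}\norm{z(t)}_{H_0^1(\Omega)}^2 + \norm{\dot z(t)}_{H_0^1(\Omega)}^2
	\le
	\dual{g(t)}{\dot z(t)}_{H^{-1}(\Omega), H_0^1(\Omega)}
	\le
	\norm{g(t)}_{H^{-1}(\Omega)} \, \norm{\dot z(t)}_{H_0^1(\Omega)}.
\end{equation*}

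From here there are two natural routes, and the slight subtlety is that we only control $g$ in $W^{1,1}$, i.e.\ $\dot g \in L^1(I;H^{-1}(\Omega))$, not in $L^2$. I would avoid absorbing $\norm{\dot z}$ into the left-hand side via Young's inequality (which would cost an $L^2$-norm of $g$); instead integrate the term $\dual{g}{\dot z}$ by parts in time. Since $z(0)=0$,
\begin{equation*}
	\int_0^t \dual{g}{\dot z}_{H^{-1}(\Omega),H_0^1(\Omega)} \ds
	=
	\dual{g(t)}{z(t)}_{H^{-1}(\Omega),H_0^1(\Omega)} - \dual{g(0)}{z(0)} - \int_0^t \dual{\dot g}{z}_{H^{-1}(\Omega),H_0^1(\Omega)} \ds,
\end{equation*}
and both boundary/integral terms are bounded by $\norm{g}_{W^{1,1}(I;H^{-1}(\Omega))} \, \norm{z}_{C(\bar I;H_0^1(\Omega))}$ (using $\norm{g(t)}_{H^{-1}} \le \norm{g}_{W^{1,1}(I;H^{-1})}$ up to the embedding constant depending only on $T$). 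Dropping the nonnegative $\norm{\dot z}^2$ term and using $\max_{s\le t}$, this yields
\begin{equation*}
	\frac12 \norm{z(t)}_{H_0^1(\Omega)}^2
	\le
	C \, \norm{g}_{W^{1,1}(I;H^{-1}(\Omega))} \, \norm{z}_{C(\bar I;H_0^1(\Omega))} .
\end{equation*}

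Taking the supremum over $t \in \bar I$ on the left, writing $a := \norm{z}_{C(\bar I;H_0^1(\Omega))}$, gives $\tfrac12 a^2 \le C \norm{g}_{W^{1,1}} \, a$, hence $a \le 2C \norm{g}_{W^{1,1}(I;H^{-1}(\Omega))}$, which is exactly the claim with a constant depending only on $T$. I do not expect a serious obstacle: the only point requiring a little care is the justification of the integration-by-parts / chain-rule identities for Bochner functions (legitimate since $z \in H^1_\star(I;H_0^1(\Omega))$ and $g \in W^{1,1}(I;H^{-1}(\Omega))$, so the duality pairing $t\mapsto \dual{g(t)}{z(t)}$ is absolutely continuous) and the fact that the monotone dissipation term can simply be discarded — it never needs to be estimated, only its sign is used.
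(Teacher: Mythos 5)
Your argument is correct and follows essentially the same route as the paper's proof: test the difference of the two regularized state equations with $\dot z_2-\dot z_1$, discard the monotone dissipation term and the (nonnegative) viscous term, and integrate the right-hand side by parts in time so that only the $W^{1,1}(I;H^{-1}(\Omega))$-norm of $g_2-g_1$ is needed, before taking the supremum and dividing. The only cosmetic slip is that the viscous term should carry the factor $\rho$, i.e.\ it equals $\rho\,\norm{\dot z}_{H_0^1(\Omega)}^2$, which is immaterial since the term is dropped anyway.
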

\begin{proof}
	By testing the state equations \eqref{eq:regularized_state_eq} for $z_1:=\SS_\rho(g_1)$ and $z_2:=\SS_\rho(g_2)$
	by $\dot z_2 - \dot z_1$, integrating over $(0,t)$, and taking the difference, we get
	\begin{multline*}
		\int_0^t \int_\Omega
		\bigh(){ \absrho{\dot z_2}' - \absrho{\dot z_1}' } \bigh(){\dot z_2 - \dot z_1}
		\, \dx \, \ds
		+
		\rho \, \norm{\dot z_2 - \dot z_1}_{L^2(0,t; H_0^1(\Omega))}^2
		\\
		+
		\int_0^t \scalarprod{\nabla(z_2 - z_1)}{\nabla(\dot z_2 - \dot z_1)} \, \ds
		=
		\int_0^t \dual{g_2 - g_1}{\dot z_2 - \dot z_1}_{H^{-1}(\Omega), H_0^1(\Omega)} \, \ds
		.
	\end{multline*}
	Using the monotonicity of $\absrho{\cdot}'$ and $z_1(0) = z_2(0) = 0$,
	we get for all $t\in \bar I$
	\begin{multline*}
		\frac12 \, \norm{z_2(t) - z_1(t)}_{H_0^1(\Omega)}^2
		\le
		\int_0^t \dual{g_2 - g_1}{\dot z_2 - \dot z_1}_{H^{-1}(\Omega), H_0^1(\Omega)} \, \ds
		\\
		\begin{aligned}
		&=
		-\int_0^t \dual{\dot g_2 - \dot g_1}{z_2 - z_1}_{H^{-1}(\Omega), H_0^1(\Omega)} \, \ds
		+ \dual{g_2(t) - g_1(t)}{z_2(t) - z_1(t)}_{H^{-1}(\Omega), H_0^1(\Omega)}
		\\
		&\le \norm{z_2-z_1}_{L^\infty(I;H^1_0(\Omega))}
		\left( \norm{\dot g_2-\dot g_1}_{L^1(I;H^{-1}(\Omega))} +  \norm{g_2-g_1}_{L^\infty(I;H^{-1}(\Omega))}\right)
		.
		\end{aligned}
	\end{multline*}
	Taking the supremum on the left-hand side, we obtain
	\begin{equation*}
		\norm{z_2 - z_1}_{L^\infty(I; H_0^1(\Omega))}
		\le
		2 \left( \norm{\dot g_2-\dot g_1}_{L^1(I;H^{-1}(\Omega))} +  \norm{g_2-g_1}_{L^\infty(I;H^{-1}(\Omega))}\right),
	\end{equation*}
	which shows the assertion.
\end{proof}

As last result in this section,
we provide some a-priori estimates
and, in particular,
provide the boundedness of $z = \SS_\rho(g)$
in $H^1(I; H_0^1(\Omega))$
independent of $\rho$.

\begin{lemma}
	\label{lem:boundedness_H1H1}
	Let $\rho>0$ and $g\in H^1(I;H^{-1}(\Omega)) \cap \GG_0$ be given, and let $z = \SS_\rho(g)$.
	Then it holds $z \in H^2(I; H_0^1(\Omega))$.
	In addition, there is a constant $C>0$ independent of $\rho$ (and $g$) such that
	\begin{equation*}
		\rho \, \norm{\dot z(T)}_{H_0^1(\Omega)}^2 + \norm{z}_{H^1(I;H_0^1(\Omega))}^2
		\le
		C \, \bigh(){ \rho + \norm{g}_{H^1(I;H^{-1}(\Omega))}^2}.
	\end{equation*}
	and
	\[
	 \|\dot z(0)\|_{H^1_0(\Omega)} \le C.
	\]
\end{lemma}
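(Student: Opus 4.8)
The plan is to obtain the $H^2$-regularity and the $\rho$-uniform estimates from two energy identities: one coming from testing the state equation with $\dot z$, and a second, "differentiated" one coming from testing the time-differentiated equation with $\ddot z$. First I would establish the $H^2$-regularity. Formally differentiating \eqref{eq:regularized_state_eq} in time gives
\begin{equation*}
	\absrho{\dot z}'' \, \ddot z - \rho \, \Delta \ddot z - \Delta \dot z = \dot g
	\quad \text{in } H^{-1}(\Omega), \text{ a.e.\ on } I,
\end{equation*}
which, read as a linear ODE for $\dot z$ in $H_0^1(\Omega)$ driven by $\dot g \in L^2(I;H^{-1}(\Omega))$ with monotone principal part $(-\rho\Delta + \absrho{\dot z}''\,\cdot)$ (note $\absrho{\dot z}'' \ge 0$), is uniquely solvable with $\ddot z \in L^2(I;H_0^1(\Omega))$ by the same Gajewski–Gröger–Zacharias result used for \autoref{thm:differentiability}; this yields $z \in H^2(I;H_0^1(\Omega))$ and also makes the manipulations below rigorous. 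For the bound on $\|\dot z(0)\|_{H_0^1(\Omega)}$, I would evaluate \eqref{eq:regularized_state_eq} at $t=0$, test with $\dot z(0)$, and use $\absrho{\dot z(0)}'\,\dot z(0) \ge |\dot z(0)| - \rho \ge -\rho\,\meas(\Omega)$ from \autoref{lem:absrho} together with $z(0)=0$ and $\|g(0)\|_{H^{-1}} \le C$ (which holds since $g(0)\in \GG_0$, i.e.\ $\|g(0)\|_{L^\infty(\Omega)}\le 1$); this gives $\rho\,\|\dot z(0)\|_{H_0^1}^2 \le C(\rho + \|\dot z(0)\|_{H_0^1})$, but to get a $\rho$-\emph{independent} bound I instead test the differentiated equation — see below.

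For the main estimate, I would test the differentiated equation with $\ddot z$ and integrate over $(0,t)$:
\begin{equation*}
	\int_0^t\!\!\int_\Omega \absrho{\dot z}''\, |\ddot z|^2 \,\dx\,\ds
	+ \frac\rho2 \,\norm{\dot z(t)}_{H_0^1(\Omega)}^2
	+ \int_0^t \dual{-\Delta \dot z}{\ddot z}\,\ds
	= \frac\rho2\,\norm{\dot z(0)}_{H_0^1(\Omega)}^2 + \int_0^t \dual{\dot g}{\ddot z}\,\ds.
\end{equation*}
The first term is nonnegative. The awkward term is $\int_0^t \dual{-\Delta\dot z}{\ddot z}\,\ds = \tfrac12\|\dot z(t)\|_{H_0^1}^2 - \tfrac12\|\dot z(0)\|_{H_0^1}^2$ — wait, this has the wrong sign to be useful as a positive quantity once combined with the boundary terms, so the $\rho$-independent control of $\|\dot z\|_{L^2(I;H_0^1)}$ must come differently. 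The clean route is to instead go back to \eqref{eq:orthogonality}'s regularized analogue: testing \eqref{eq:regularized_state_eq} with $\dot z$ and using $\absrho{\dot z}'\,\dot z \ge |\dot z|-\rho$ gives
\begin{equation*}
	\rho\,\norm{\dot z}_{L^2(0,t;H_0^1(\Omega))}^2 + \tfrac12\norm{z(t)}_{H_0^1(\Omega)}^2
	\le \rho\, T\,\meas(\Omega) + \int_0^t \dual{g}{\dot z}_{H^{-1},H_0^1}\,\ds,
\end{equation*}
and after integration by parts in the last term ($z(0)=0$) this controls $\|z\|_{C(\bar I;H_0^1)}$ by $C(\rho + \|g\|_{H^1(I;H^{-1})}\,\|z\|_{C(\bar I;H_0^1)})$, hence $\|z\|_{L^\infty(I;H_0^1)}\le C(\sqrt\rho + \|g\|_{H^1(I;H^{-1})})$. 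To upgrade to $\|\dot z\|_{L^2(I;H_0^1)}$: differentiate the equation, test with $\dot z$ itself (not $\ddot z$), integrate by parts in the $\absrho{\dot z}''\ddot z$ term using $\frac{d}{dt}\absrho{\dot z}' = \absrho{\dot z}''\ddot z$, so $\int_0^t\dual{\frac{d}{dt}\absrho{\dot z}'}{\dot z} = \dual{\absrho{\dot z}'}{\dot z}\big|_0^t - \int_0^t\dual{\absrho{\dot z}'}{\ddot z}$; the last integral is $\int_0^t\frac{d}{dt}\!\int_\Omega\absrho{\dot z}\,\dx\,\ds = \int_\Omega(\absrho{\dot z(t)}-\absrho{\dot z(0)})\,\dx$, all controlled by $\||\dot z|\|_{L^\infty(I;L^1)}+\rho\,\meas(\Omega)$. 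Together with the $\rho\,\|\nabla\dot z\|_{L^2}^2$ term that appears with a good sign and the $\dual{\dot g}{\dot z}$ term, this closes the estimate for $\rho\,\|\dot z(T)\|_{H_0^1}^2 + \|\dot z\|_{L^2(I;H_0^1)}^2 \le C(\rho + \|g\|_{H^1(I;H^{-1})}^2)$, and $\|\dot z(0)\|_{H_0^1}\le C$ then follows from the evaluation-at-zero computation above combined with the now-available bound (or directly, since the differentiated identity shows $\frac\rho2\|\dot z(0)\|^2$ is dominated by $\rho$-free quantities).

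The main obstacle I anticipate is \textbf{getting the $\rho$-independence of the $H^1(I;H_0^1)$ bound right}: the viscous term $\rho\,\|\dot z\|_{L^2(I;H_0^1)}^2$ carries an explicit $\rho$, so one cannot extract $\|\dot z\|_{L^2(I;H_0^1)}$ from it uniformly; the uniform control of $\|\dot z\|_{L^2(I;H_0^1)}$ must instead be squeezed out of the elliptic term $\int_0^t\dual{-\Delta\dot z}{\dot z}$ in the differentiated equation, and one must carefully track that the potentially dangerous terms — namely $\frac\rho2\|\dot z(0)\|_{H_0^1}^2$ and $\int_0^t\dual{\dot g}{\dot z}$ — are absorbed (the former via the evaluation-at-zero estimate, which only produces $\rho\cdot(\text{bounded})$, and the latter by Young's inequality against the elliptic term). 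I would also need to be careful that every integration by parts in time is legitimate, which is exactly why the $H^2(I;H_0^1)$-regularity is proved first. A secondary technical point is that $\absrho{\dot z}''$ is only bounded by $2/\rho$, not uniformly, so it cannot be used to control anything $\rho$-independently; fortunately it only ever appears multiplied against a square with a favorable sign and can simply be discarded.
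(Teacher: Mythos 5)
Your main energy estimate is essentially the paper's: differentiate \eqref{eq:regularized_state_eq} in time, test with $\dot z$ (not $\ddot z$), handle the term $\absrho{\dot z}''\,\dot z\,\ddot z$ by an exact time integration (your integration by parts is equivalent to the paper's auxiliary function $f_\rho(r)=\int_0^r\absrho{s}''\,s\,\ds$ with $0\le f_\rho\le\rho$), and absorb $\int_I\dual{\dot g}{\dot z}\,\dt$ by Young's inequality against the elliptic term. The genuine gap is the $\rho$-uniform bound on $\norm{\dot z(0)}_{H_0^1(\Omega)}$. You need it, because integrating the viscous term $\rho\,\nabla\ddot z\cdot\nabla\dot z$ leaves $\tfrac\rho2\norm{\dot z(0)}_{H_0^1(\Omega)}^2$ on the right-hand side, and this must be shown to be $O(\rho)$ \emph{before} the main identity closes. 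Your evaluation at $t=0$ only uses $\norm{g(0)}_{H^{-1}(\Omega)}\le C$ and discards the $L^1$ term, which yields $\rho\,\norm{\dot z(0)}_{H_0^1(\Omega)}^2\le\rho\,\meas(\Omega)+C\,\norm{\dot z(0)}_{H_0^1(\Omega)}$, i.e.\ only $\norm{\dot z(0)}_{H_0^1(\Omega)}=O(\rho^{-1})$, so $\tfrac\rho2\norm{\dot z(0)}_{H_0^1}^2$ may blow up like $\rho^{-1}$. Your fallback, that ``the differentiated identity shows $\tfrac\rho2\norm{\dot z(0)}^2$ is dominated by $\rho$-free quantities,'' is circular: that identity takes $\tfrac\rho2\norm{\dot z(0)}^2$ as input data on its right-hand side; it does not control it. The fix is exactly the point the paper emphasizes after the lemma: the compatibility condition $g\in\GG_0$, i.e.\ $\norm{g(0)}_{L^\infty(\Omega)}\le 1$, is crucial. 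Test $-\rho\,\Delta\dot z(0)+\absrho{\dot z(0)}'=g(0)$ with $\dot z(0)$ and \emph{keep} the $L^1$ term: $\absrho{\dot z(0)}'\,\dot z(0)\ge\abs{\dot z(0)}-\rho$ gives $\norm{\dot z(0)}_{L^1(\Omega)}-\rho\,\meas(\Omega)+\rho\,\norm{\dot z(0)}_{H_0^1(\Omega)}^2\le\int_\Omega g(0)\,\dot z(0)\,\dx\le\norm{\dot z(0)}_{L^1(\Omega)}$, so the $L^1$ terms cancel and $\norm{\dot z(0)}_{H_0^1(\Omega)}^2\le\meas(\Omega)$ uniformly in $\rho$; then $\tfrac\rho2\norm{\dot z(0)}_{H_0^1}^2\le\tfrac\rho2\meas(\Omega)$ and your chain of estimates goes through.

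A secondary weakness is your route to $z\in H^2(I;H_0^1(\Omega))$: solving the formally differentiated equation as a fresh ODE via the Gajewski--Gröger--Zacharias theorem is not yet a proof, since you would have to verify that $v\mapsto(-\rho\Delta+\absrho{v}''\,\cdot)^{-1}(\Delta v+\dot g)$ is Lipschitz (this needs the Lipschitz continuity of $\absrho{\cdot}''$ together with Sobolev embeddings) and, more importantly, that the solution of this auxiliary ODE actually coincides with $\dot z$. The paper sidesteps this with a direct difference-quotient argument: the global Lipschitz continuity of $T_\rho$ gives $\norm{\dot z(t_2)-\dot z(t_1)}_{H_0^1(\Omega)}\le L_\rho\,\norm{g(t_2)-g(t_1)+\Delta z(t_2)-\Delta z(t_1)}_{H^{-1}(\Omega)}$, and since $\dot g$ and $\Delta\dot z$ lie in $L^2(I;H^{-1}(\Omega))$, finite differences yield $\dot z\in H^1(I;H_0^1(\Omega))$ — a $\rho$-dependent bound, but only the qualitative regularity is needed to justify the subsequent computations.
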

\begin{proof}
	We start by showing
	$\dot z \in H^1(I; H_0^1(\Omega))$.
	Since $T_\rho$ is globally Lipschitz continuous,
	we have
	\begin{align*}
		\norm{\dot z(t_2) - \dot z(t_1)}_{H_0^1(\Omega)}
		&=
		\norm{T_\rho(g(t_2) + \Delta z(t_2)) - T_\rho(g(t_1) + \Delta z(t_1))}_{H_0^1(\Omega)}
		\\&\le
		L_\rho \, \norm{g(t_2) + \Delta z(t_2) - g(t_1) - \Delta z(t_1)}_{H^{-1}(\Omega)}
		.
	\end{align*}
	with a $\rho$-dependent constant $L_\rho$. Since both $\dot g$ and $\Delta \dot z$ are in $L^2(I;H^{-1}(\Omega))$,
	one can prove with the help of finite differences that it holds $\dot z \in H^1(I; H_0^1(\Omega))$.

	Moreover, we obtain $\dot z(0) = T_\rho( \Delta z(0) + g(0) ) = T_\rho(g(0))$ by continuity. Testing the associated semilinear
	elliptic equation by $\dot z(0)$ and using $z(0)=0$ yields
	\[
		\int_\Omega \absrho{\dot z(0)}\,\dot z(0)\,\dx + \rho \, \|\dot z(0)\|_{H^1_0(\Omega)}^2 =  \int_\Omega g(0)\,\dot z(0)\,\dx.
	\]
	By using the second inequality  in  \eqref{eqabsrhol1} for $\absrho{\cdot}$ as well as the assumption $\|g(0)\|_{L^\infty(\Omega)}\le1$ we obtain
	\[
	 \|\dot z(0)\|_{L^1(\Omega)}  + \rho \, \|\dot z(0)\|_{H^1_0(\Omega)}^2 \le  \|\dot z(0)\|_{L^1(\Omega)} + \rho \, \meas(\Omega),
	\]
	which implies
	\begin{equation}\label{eq:estdotz0}
	  \|\dot z(0)\|_{H^1_0(\Omega)}^2 \le  \meas(\Omega).
	\end{equation}
	Now, let us differentiate \eqref{eq:regularized_state_eq} w.r.t.\ $t$
	to obtain
	\begin{equation*}
		\absrho{\dot z}'' \, \ddot z - \rho \, \Delta \ddot z - \Delta \dot z = \dot g.
	\end{equation*}
	Testing with $\dot z$ and integrating, we find
	\begin{equation*}
		\int_Q
		\absrho{\dot z}'' \, \dot z \, \ddot z + \rho \, \nabla \ddot z \cdot \nabla \dot z + \abs{\nabla \dot z}^2
		\,\dxt
		=
		\int_I
		\dual{\dot g}{\dot z}_{H^{-1}(\Omega), H_0^1(\Omega)}
		\,\dt
		.
	\end{equation*}
	Let us  introduce the function
	\begin{equation*}
		f_\rho (r) = \int_0^r \absrho{s}'' \, s \, \ds.
	\end{equation*}
	This construction implies
	\begin{equation*}
		\frac{\d}{\dt}
		f_\rho(\dot z)
		=
		f_\rho'(\dot z) \, \ddot z
		=
		\absrho{\dot z}'' \, \dot z \, \ddot z
		.
	\end{equation*}
	Consequently, we find
	\begin{multline*}
		\int_\Omega f_\rho(\dot z(T)) \, \dx
		+
		\frac\rho2
		\,
		\norm{\dot z(T)}_{H_0^1(\Omega)}^2
		+
		\norm{\dot z}_{L^2(I;H_0^1(\Omega))}^2
		\\
		\le
		\int_I
		\dual{\dot g}{\dot z}_{H^{-1}(\Omega), H_0^1(\Omega)}
		\,\dt
		+
		\int_\Omega f_\rho(\dot z(0)) \, \dx
		+
		\frac\rho2 \, \meas(\Omega)
		,
	\end{multline*}
	where we used in addition the estimate \eqref{eq:estdotz0} of $\dot z(0)$.
	Due to the assumptions on $\absrho{\cdot}$, the auxiliary function $f_\rho$ is bounded,
	and it holds 	$0 \le f_\rho(s) \le \rho$.
	Hence, we obtain
	\begin{equation}\label{eq:uniformzrho}
		\frac\rho2
		\,
		\norm{\dot z(T)}_{H_0^1(\Omega)}^2
		+
		\norm{\dot z}_{L^2(I;H_0^1(\Omega))}^2
		\\
		\le
		\int_I
		\dual{\dot g}{\dot z}_{H^{-1}(\Omega), H_0^1(\Omega)}
		\,\dt
		+
		\frac32 \, \rho \, \meas(\Omega)
		.
	\end{equation}
	Using Young's inequality, we finally obtain
	\begin{equation*}
		\rho \, \norm{\dot z(T)}_{H_0^1(\Omega)}^2 +
		\norm{\dot z}_{L^2(I;H_0^1(\Omega))}^2
		\le
		\norm{\dot g}_{L^2(I;H^{-1}(\Omega))}^2
			+
			3\,\rho \, \meas(\Omega)
		.
	\end{equation*}
	This shows the claim.
\end{proof}
We emphasize that the compatibility condition $g \in \GG_0$, i.e., $\norm{g}_{L^\infty(\Omega)} \le 1$,
is crucial for the validity of \autoref{lem:boundedness_H1H1}.

\subsection{Convergence of the regularization of the state equation}
\label{subsec:convergence_state_equation}
In this section, we show that the sequence $\{\SS_\rho(g)\}_{\rho > 0}$ converges towards the solution $\SS(g)$
of the unregularized system.
\begin{lemma}
	\label{cor:state_equation_conv}
	Let $\{g_n\}_{n \in \N} \in H^1(I;L^2(\Omega)) \cap \GG_0$ be given,
	such that $g_n \weakly g$ in $H^1(I; L^2(\Omega))$.
	Let $\{\rho_n\}_{n\in\N}$ be a positive sequence with $\rho_n \searrow 0$.
	For $n \in \N$  we set $z_n := \SS_{\rho_n}(g_n)$
	and $z: = \SS(g)$.
	Then,
	$z_n \weakly z$ in $H^1_\star(I; H_0^1(\Omega))$
	and
	$z_n \to z$ in $C(\bar I;H_0^1(\Omega))$.

	Moreover, in case $g_n \equiv g$,
	we have the estimate
	\begin{equation}
		\label{eq:state_equation_conv}
		\norm{z - z_n}_{C(\bar I;H^1_0(\Omega))}
		\le
		C \, (1 + \norm{z}_{H^1(I;H_0^1(\Omega))}) \, \rho_n^{1/2}
		,
	\end{equation}
	with $C > 0$ independent of $g, \rho_n$.
\end{lemma}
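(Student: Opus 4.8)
The plan is to prove all assertions at once by establishing a single quantitative bound for $\norm{z_n(t)-z(t)}_{H_0^1(\Omega)}$, where $z:=\SS(g)$, that degenerates as $\rho_n\searrow0$. First, $\{g_n\}$ is bounded in $H^1(I;L^2(\Omega))$, hence in $H^1(I;H^{-1}(\Omega))$, so \autoref{lem:boundedness_H1H1} (together with $\rho_n\le\rho_1$) shows that $\{z_n\}$ is bounded in $H^1_\star(I;H_0^1(\Omega))$, uniformly in $n$; in particular $C_0:=\sup_n\norm{\dot z_n-\dot z}_{L^2(I;H_0^1(\Omega))}<\infty$. Moreover, $g_n\weakly g$ in $H^1(I;L^2(\Omega))$ forces $g_n(0)\weakly g(0)$, so $g\in\GG_0$ and $z=\SS(g)$ is well-defined, with $\norm{\dot z}_{L^2(I;H_0^1(\Omega))}\le\norm{\dot g}_{L^2(I;H^{-1}(\Omega))}$ by \eqref{eq:energy_estimate}.

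The heart of the argument is a pair of pointwise-in-time variational inequalities. Writing the regularized equation \eqref{eq:regularized_state_eq} as $\Delta z_n+g_n=\absrhon{\dot z_n}'-\rho_n\Delta\dot z_n$ and pairing with $\dot z_n(s)$, I obtain, for a.a.\ $s\in I$ and \emph{every} $v\in K$, using $\absrhon{v}'\,v\ge\abs v-\rho_n$ from \autoref{lem:absrho} together with $\abs v\le1$,
\begin{equation}
  \label{eq:plan_reg_ineq}
  \dual{\dot z_n(s)}{v-(\Delta z_n(s)+g_n(s))}
  \le
  \rho_n\,\meas(\Omega)-\rho_n\,\norm{\dot z_n(s)}_{H_0^1(\Omega)}^2 .
\end{equation}
Retaining the (negative) viscous term on the right will be essential. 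On the other hand, by \autoref{lem:evi} the function $z=\SS(g)$ satisfies \eqref{eq:evi_dual}, i.e.\ $\Delta z(s)+g(s)\in K$ and $\dual{\dot z(s)}{\tilde v-(\Delta z(s)+g(s))}\le0$ for all $\tilde v\in K$ and a.a.\ $s\in I$. Inserting $\tilde v=\absrhon{\dot z_n(s)}'$, which lies in $K$ by \autoref{asm:regularized_absolute_value}, and using once more $\absrhon{\dot z_n(s)}'=(\Delta z_n(s)+g_n(s))+\rho_n\Delta\dot z_n(s)$ yields
\begin{equation}
  \label{eq:plan_evi_ineq}
  \dual{\dot z(s)}{(\Delta z_n(s)+g_n(s))-(\Delta z(s)+g(s))}
  \le
  \rho_n\,\scalarprod{\dot z_n(s)}{\dot z(s)}_{H_0^1(\Omega)} .
\end{equation}

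Choosing $v=\Delta z(s)+g(s)\in K$ in \eqref{eq:plan_reg_ineq}, adding \eqref{eq:plan_evi_ineq}, estimating $-\rho_n\norm{\dot z_n(s)}_{H_0^1(\Omega)}^2+\rho_n\scalarprod{\dot z_n(s)}{\dot z(s)}_{H_0^1(\Omega)}\le\tfrac{\rho_n}{4}\norm{\dot z(s)}_{H_0^1(\Omega)}^2$ by Young's inequality, and using $(\Delta z+g)-(\Delta z_n+g_n)=\Delta(z-z_n)+(g-g_n)$ together with $\dual{\dot z_n(s)-\dot z(s)}{\Delta(z(s)-z_n(s))}=\tfrac{\d}{\ds}\tfrac12\norm{z_n(s)-z(s)}_{H_0^1(\Omega)}^2$, I obtain the differential inequality
\begin{equation*}
  \frac{\d}{\ds}\frac12\norm{z_n(s)-z(s)}_{H_0^1(\Omega)}^2
  \le
  \dual{\dot z_n(s)-\dot z(s)}{g_n(s)-g(s)}
  +\rho_n\,\meas(\Omega)+\frac{\rho_n}{4}\,\norm{\dot z(s)}_{H_0^1(\Omega)}^2 .
\end{equation*}
Integrating over $(0,t)$ with $z_n(0)=z(0)=0$, bounding $\int_0^t\dual{\dot z_n-\dot z}{g_n-g}\ds\le\norm{\dot z_n-\dot z}_{L^2(I;H_0^1(\Omega))}\,\norm{g_n-g}_{L^2(I;H^{-1}(\Omega))}$, and taking the supremum over $t\in\bar I$, I arrive at
\begin{equation*}
  \norm{z_n-z}_{C(\bar I;H_0^1(\Omega))}^2
  \le
  2\,C_0\,\norm{g_n-g}_{L^2(I;H^{-1}(\Omega))}
  +2\,T\,\rho_n\,\meas(\Omega)
  +\frac{\rho_n}{2}\,\norm{z}_{H^1(I;H_0^1(\Omega))}^2 .
\end{equation*}
Since $H^1(I;L^2(\Omega))\embeds L^2(I;H^{-1}(\Omega))$ compactly, $\norm{g_n-g}_{L^2(I;H^{-1}(\Omega))}\to0$, so the right-hand side tends to $0$; this proves $z_n\to z$ in $C(\bar I;H_0^1(\Omega))$, and the weak convergence $z_n\weakly z$ in $H^1_\star(I;H_0^1(\Omega))$ then follows from the uniform bound of the first step together with uniqueness of the limit (each subsequence has a weakly convergent subsubsequence, whose limit coincides with $z$ by the strong convergence just obtained). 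For the rate, the hypothesis $g_n\equiv g$ removes the first term; taking square roots and using $\sqrt{a+b}\le\sqrt a+\sqrt b$ and $\norm{\dot z}_{L^2(I;H_0^1(\Omega))}\le\norm z_{H^1(I;H_0^1(\Omega))}$ gives \eqref{eq:state_equation_conv} with $C:=\max\{\sqrt{2\,T\,\meas(\Omega)},\,1\}$.

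I expect the crux to be the fact that $\Delta z_n+g_n$ need \emph{not} lie in the constraint set $K$: the viscous term $\rho_n\Delta\dot z_n$ may push it outside $[-1,1]$, and it is in general not even an $L^2(\Omega)$-function, so one cannot feed it into the limiting inequality \eqref{eq:evi_dual} directly. The remedy above is to test \eqref{eq:evi_dual} with $\absrhon{\dot z_n(s)}'$ instead, which \emph{does} belong to $K$, and then to keep careful track of the two $\rho_n$-perturbations that appear: the one in \eqref{eq:plan_reg_ineq} is negative and is kept, the one in \eqref{eq:plan_evi_ineq} is absorbed against it by Young's inequality, and the surviving $O(\rho_n)$ error is thereby controlled by $\norm{z}_{H^1(I;H_0^1(\Omega))}$ rather than by $\norm g$ — which is exactly the form demanded by \eqref{eq:state_equation_conv}. (If one simply discarded the negative term in \eqref{eq:plan_reg_ineq}, the error would instead be governed by $\norm{\dot z_n}_{L^2(I;H_0^1(\Omega))}$, bounded via \autoref{lem:boundedness_H1H1} only in terms of $\norm g$, and the stated rate would be lost.)
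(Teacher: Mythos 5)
Your proof is correct and takes essentially the same route as the paper: two monotonicity/convexity inequalities (one per equation) are added, \cref{lem:absrho} supplies the $O(\rho_n)$ error, Young's inequality absorbs the viscous cross term, the compact embedding $H^1(I;L^2(\Omega))\embeds L^2(I;H^{-1}(\Omega))$ handles $g_n-g$, and the resulting bound (with the same $\rho_n\meas(Q)+\tfrac{\rho_n}{4}\norm{z}_{H^1(I;H_0^1(\Omega))}^2$ error) yields both the convergence and the rate. The only cosmetic difference is that you phrase the limit equation through the dual formulation \eqref{eq:evi_dual} tested with $\absrhon{\dot z_n}'\in K$, whereas the paper uses the subdifferential inequalities of the two dissipation functionals directly.
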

\begin{proof}
	By \autoref{lem:boundedness_H1H1} we find that the sequence $\{z_n\}_{n \in \N}$ is bounded in $H^1(I; H_0^1(\Omega))$.
	From the state equation \eqref{eq:state_equation},
	we find
	\begin{equation*}
		\dual{\Delta z + g}{ \dot z_n - \dot z}_{H^{-1}(\Omega), H_0^1(\Omega)}
		+
		\norm{\dot z}_{L^1(\Omega)}
		\le
		\norm{\dot z_n}_{L^1(\Omega)}.
	\end{equation*}
	Similarly, by testing the regularized equation \eqref{eq:regularized_state_eq} by $\dot z_n - \dot z$ we obtain
	\[
		\dual{\Delta z_n + g_n + \rho_n \, \Delta \dot z_n}{ \dot z - \dot z_n}_{H^{-1}(\Omega), H_0^1(\Omega)}
		+
		\absrhon{\dot z_n}
		\le
		\absrhon{\dot z}.
	\]
	Adding both inequalities and integrating on $(0,t)$ for $t\in \bar I$ yields
	\begin{align*}
		&
		\rho_n \, \norm{\dot z - \dot z_n}_{L^2(0,t;H_0^1(\Omega))}^2
		+
		\frac12 \, \norm{z(t) - z_n(t)}_{H_0^1(\Omega)}^2
		\\ &\qquad
		\le
		\int_0^t \int_\Omega
		\abs{\dot z_n} - \absrhon{\dot z_n}
		+
		\absrhon{\dot z} - \abs{\dot z}
		\, \dx\,\ds
		+
		\rho_n \, \norm{\dot z}_{L^2(0,t;H_0^1(\Omega))} \, \norm{\dot z - \dot z_n}_{L^2(0,t;H_0^1(\Omega))}
		\\ &\qquad\qquad
		+
		\int_0^t \dual{g - g_n}{\dot z - \dot z_n}_{H^{-1}(\Omega), H_0^1(\Omega)} \, \ds
		.
	\end{align*}
	With \autoref{lem:absrho}, we can estimate the first integral.
	Applying Young's inequality
	we obtain
	\[
		\frac12 \, \norm{z(t) - z_n(t)}_{H_0^1(\Omega)}^2
		\le
		\rho_n \, \meas(Q)
		+
		\frac{\rho_n}4 \, \norm{z}_{H^1(I;H_0^1(\Omega))}^2
		+
		\int_0^t \dual{g - g_n}{\dot z - \dot z_n}_{H^{-1}(\Omega), H_0^1(\Omega)}
		.
	\]
	which proves the convergence claim
	due to $g_n \to g$ in $L^2(I; H^{-1}(\Omega))$, see the end of the proof of \autoref{lem:weak_continuity},
	as well as estimate \eqref{eq:state_equation_conv}.
\end{proof}

\begin{corollary}
	\label{cor:state_equation_conv_different_g}
	Let $\rho > 0$ and let $g, g_\rho \in H^1(I; L^2(\Omega)) \cap \GG_0$
	be given.
	We set $z_\rho := \SS_\rho(g_\rho)$
	and $z: = \SS(g)$.
	Then it holds
	\begin{equation*}
		\norm{ z-z_\rho}_{C(\bar I;H_0^1(\Omega))}
		\le
		C \, \bigh(){  (1 + \norm{z}_{H^1(I;H_0^1(\Omega))}) \, \rho^{1/2}
		+
		 \norm{g - g_\rho}_{W^{1,1}(I; H^{-1}(\Omega))}
		 }.
	\end{equation*}
	with $C>0$ independent of $\rho,g,g_\rho$.
\end{corollary}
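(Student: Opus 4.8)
The plan is to insert the intermediate state $\SS_\rho(g)$ and split the error by the triangle inequality,
\[
	\norm{z - z_\rho}_{C(\bar I; H_0^1(\Omega))}
	\le
	\norm{z - \SS_\rho(g)}_{C(\bar I; H_0^1(\Omega))}
	+
	\norm{\SS_\rho(g) - \SS_\rho(g_\rho)}_{C(\bar I; H_0^1(\Omega))}
	.
\]
The first term is a pure regularization error for a fixed control, so it is controlled by estimate \eqref{eq:state_equation_conv} of \autoref{cor:state_equation_conv}, applied with the constant sequence $g_n \equiv g$ and $\rho_n = \rho$; this gives
\[
	\norm{z - \SS_\rho(g)}_{C(\bar I; H_0^1(\Omega))}
	\le
	C \, (1 + \norm{z}_{H^1(I;H_0^1(\Omega))}) \, \rho^{1/2}
\]
with $C$ independent of $g$ and $\rho$.

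The second term is a Lipschitz estimate for the regularized solution map $\SS_\rho$ under a perturbation of the control. Since $g, g_\rho \in H^1(I;L^2(\Omega))$ embeds into $W^{1,1}(I;H^{-1}(\Omega))$, \autoref{lem:uniform_lipschitz_regularized} applies and yields
\[
	\norm{\SS_\rho(g) - \SS_\rho(g_\rho)}_{C(\bar I; H_0^1(\Omega))}
	\le
	C \, \norm{g - g_\rho}_{W^{1,1}(I;H^{-1}(\Omega))}
	,
\]
where crucially $C$ depends only on $T$ and, in particular, not on $\rho$. Adding the two estimates and enlarging the constant gives the claim.

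I do not expect a genuine obstacle here: the corollary is just an assembly of the two previously established bounds via the triangle inequality. The only point to keep track of — and the reason the statement is recorded separately — is that both constants must be uniform in $\rho$ and in the controls, which is exactly what \autoref{cor:state_equation_conv} and \autoref{lem:uniform_lipschitz_regularized} supply. Note in particular that the $\rho$-dependent Lipschitz constant $L_\rho$ of $T_\rho$ never enters, because \autoref{lem:uniform_lipschitz_regularized} is proved by an energy argument exploiting the monotonicity of $\absrho{\cdot}'$ rather than the Lipschitz constant of $T_\rho$; this is what keeps the final constant independent of $\rho$.
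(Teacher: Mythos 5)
Your proof is correct and is precisely the paper's argument: the paper's proof consists of the single instruction to combine \autoref{lem:uniform_lipschitz_regularized} and \autoref{cor:state_equation_conv}, which is exactly your triangle-inequality splitting through the intermediate state $\SS_\rho(g)$. Your added remark about why the constant stays independent of $\rho$ is accurate and consistent with how \autoref{lem:uniform_lipschitz_regularized} is proved.
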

\begin{proof}
	Combine
	\cref{lem:uniform_lipschitz_regularized,cor:state_equation_conv}.
\end{proof}

\begin{theorem}
	\label{thm:convergence_in_H1}
	Let $\{g_n\}_{n \in \N} \subset H^1(I; H^{-1}(\Omega)) \cap \GG_0$ be
	such that $g_n \to g$ in $H^1(I; H^{-1}(\Omega))$.
	Let $\{\rho_n\}_{n\in\N}$ be a positive sequence with $\rho_n \searrow 0$.
	Then,
	$z_n := \SS_{\rho_n}(g_n) \to \SS(g) =: z$ in $H^1_\star(I;H_0^1(\Omega))$.
\end{theorem}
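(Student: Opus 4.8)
The plan is to combine the $\rho$-uniform a-priori bounds of \autoref{lem:boundedness_H1H1} (which yield weak convergence in $H^1_\star(I;H_0^1(\Omega))$) with a comparison of the dissipated energies that upgrades weak to strong convergence.

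First I would collect the preliminary convergences. Since $g_n\to g$ in $H^1(I;H^{-1}(\Omega))\embeds C(\bar I;H^{-1}(\Omega))$, we have $g_n(0)\to g(0)$ in $H^{-1}(\Omega)$; as $\norm{g_n(0)}_{L^\infty(\Omega)}\le1$, a weak-$*$ compactness argument in $L^\infty(\Omega)$ gives $g\in\GG_0$, so that $z=\SS(g)$ is well defined and \autoref{lem:boundedness_unregularized} is applicable. By \autoref{lem:boundedness_H1H1}, $\{z_n\}$ is bounded in $H^1_\star(I;H_0^1(\Omega))$, and we also have the bounds $0\le f_{\rho_n}\le\rho_n$ and $\norm{\dot z_n(0)}_{H_0^1(\Omega)}^2\le\meas(\Omega)$ recorded in its proof. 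Moreover, splitting $z-z_n=(\SS(g)-\SS_{\rho_n}(g))+(\SS_{\rho_n}(g)-\SS_{\rho_n}(g_n))$, estimating the first summand by \eqref{eq:state_equation_conv} (whose proof uses only $g\in H^1(I;H^{-1}(\Omega))\cap\GG_0$ when $g_n\equiv g$) and the second by \autoref{lem:uniform_lipschitz_regularized}, together with $H^1(I;H^{-1}(\Omega))\embeds W^{1,1}(I;H^{-1}(\Omega))$, we obtain $z_n\to z$ in $C(\bar I;H_0^1(\Omega))$. Hence every weak subsequential limit of $\{z_n\}$ in $H^1_\star(I;H_0^1(\Omega))$ equals $z$, so $z_n\weakly z$ in $H^1_\star(I;H_0^1(\Omega))$; in particular $\dot z_n\weakly\dot z$ in $L^2(I;H_0^1(\Omega))$.

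The crucial point is to show $\norm{\dot z_n}_{L^2(I;H_0^1(\Omega))}\to\norm{\dot z}_{L^2(I;H_0^1(\Omega))}$. By \autoref{lem:boundedness_H1H1} we have $z_n\in H^2(I;H_0^1(\Omega))$, so we may differentiate \eqref{eq:regularized_state_eq} in time, test with $\dot z_n$, and integrate over $Q$; with the primitive $f_{\rho_n}(r)=\int_0^r\absrho{s}''\,s\,\ds$, exactly as in the proof of \autoref{lem:boundedness_H1H1}, this produces the identity
\begin{multline*}
	\norm{\dot z_n}_{L^2(I;H_0^1(\Omega))}^2
	=
	\int_I\dual{\dot g_n}{\dot z_n}_{H^{-1}(\Omega),H_0^1(\Omega)}\,\dt
	-\int_\Omega f_{\rho_n}(\dot z_n(T))\,\dx
	-\frac{\rho_n}2\,\norm{\dot z_n(T)}_{H_0^1(\Omega)}^2
	\\
	+\int_\Omega f_{\rho_n}(\dot z_n(0))\,\dx
	+\frac{\rho_n}2\,\norm{\dot z_n(0)}_{H_0^1(\Omega)}^2
	.
\end{multline*}
The two terms at $t=T$ are nonnegative and can be discarded, while the two terms at $t=0$ are bounded by $\tfrac32\,\rho_n\,\meas(\Omega)\to0$. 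Since $\dot g_n\to\dot g$ in $L^2(I;H^{-1}(\Omega))$ and $\dot z_n\weakly\dot z$ in $L^2(I;H_0^1(\Omega))$, the first term on the right converges to $\int_I\dual{\dot g}{\dot z}_{H^{-1}(\Omega),H_0^1(\Omega)}\,\dt$, which equals $\norm{\dot z}_{L^2(I;H_0^1(\Omega))}^2$ by the orthogonality relation \eqref{eq:orthogonality} of \autoref{lem:boundedness_unregularized}. Thus $\limsup_{n}\norm{\dot z_n}_{L^2(I;H_0^1(\Omega))}^2\le\norm{\dot z}_{L^2(I;H_0^1(\Omega))}^2$, and weak lower semicontinuity gives the matching inequality for the $\liminf$.

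Finally, norm convergence together with the weak convergence $\dot z_n\weakly\dot z$ in the Hilbert space $L^2(I;H_0^1(\Omega))$ yields $\dot z_n\to\dot z$ strongly; since $z_n(0)=z(0)=0$, this is equivalent to $z_n\to z$ in $H^1_\star(I;H_0^1(\Omega))$, which is the assertion. I expect the energy identity of the third paragraph to be the only genuinely delicate step: one must justify differentiating the regularized equation (legitimate by the $H^2$-regularity from \autoref{lem:boundedness_H1H1}) and carefully track the boundary contributions at $t=0$ and $t=T$, checking that they are all of order $O(\rho_n)$ and hence vanish in the limit; the remaining arguments are soft functional analysis.
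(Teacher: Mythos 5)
Your proposal is correct and follows essentially the same route as the paper: strong convergence in $C(\bar I;H_0^1(\Omega))$ via the Lipschitz estimate of \autoref{lem:uniform_lipschitz_regularized} combined with the rate estimate \eqref{eq:state_equation_conv} (which is exactly \autoref{cor:state_equation_conv_different_g}), weak convergence from the uniform bound of \autoref{lem:boundedness_H1H1}, the energy inequality \eqref{eq:uniformzrho} together with the orthogonality relation \eqref{eq:orthogonality} to get convergence of the norms, and then the weak-plus-norm (Radon--Riesz) argument. Your extra care in checking $g\in\GG_0$ and in noting that the rate estimate only needs $g\in H^1(I;H^{-1}(\Omega))\cap\GG_0$ is a nice touch but does not change the argument.
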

\begin{proof}
	By \autoref{cor:state_equation_conv_different_g}, we obtain the convergence $z_n\to z$ in $C(\bar I; H_0^1(\Omega))$.
	Due to \autoref{lem:boundedness_H1H1}, the sequence $\{z_n\}_{n \in \N}$ is bounded in $H^1_\star(I; H_0^1(\Omega))$.
	Thus, it converges weakly towards $z$ in $H^1_\star(I; H_0^1(\Omega))$.

	As in the proof of \autoref{lem:boundedness_H1H1}, see, e.g., \eqref{eq:uniformzrho},
	we obtain
	\begin{equation*}
		\norm{\dot z_{\rho_n}}_{L^2(I;H_0^1(\Omega))}^2
		\\
		\le
		\int_I
		\dual{\dot g_{\rho_n}}{\dot z_{\rho_n}}_{H^{-1}(\Omega), H_0^1(\Omega)}
		\,\dt
		+
		\frac32\, \rho_n \meas(\Omega)
		.
	\end{equation*}
	As $g_{\rho_n}\to g$ in $H^1(I; H^{-1}(\Omega))$ and $\dot z_{\rho_n} \rightharpoonup \dot z$ in
	$L^2(I;H^{-1}(\Omega))$, we can pass to the limit  to find
\begin{equation*}
	\limsup_{n\to\infty} \norm{\dot z_{\rho_n}}_{L^2(I;H_0^1(\Omega))}^2\leq
	\int_I \dual{\dot g}{\dot z}_{H^{-1}(\Omega),H_0^1(\Omega)}  \dt.
\end{equation*}
	Together with \eqref{eq:orthogonality} this implies
	\[
	\limsup_{n\to\infty} \norm{\dot z_{\rho_n}}_{L^2(I;H_0^1(\Omega))}^2\leq
	\int_I \dual{\dot g}{\dot z}_{H^{-1}(\Omega),H_0^1(\Omega)}  \dt
	= \int_I \|\dot z\|_{H_0^1(\Omega)}^2 \dt = \norm{\dot z}_{L^2(I;H_0^1(\Omega))}^2,
	\]
	which shows $\norm{\dot z_n}_{L^2(I;H_0^1(\Omega))} \to \norm{\dot z}_{L^2(I;H_0^1(\Omega))}$.
	The assertion follows from
	the weak convergence of $z_n$ to $z$ in $H^1_\star(I;H_0^1(\Omega))$.
\end{proof}

\subsection{Regularized optimal control problem}
\label{subsec:regularized_optimal_control}

Let  $(\bar z,\bar g)\in H^1_\star(I;H^1_0(\Omega))\times H^1_\star(I;L^2(\Omega))$ be a fixed local solution of \eqref{eq:optimal_control_problem}.
Then there is $\delta>0$ such that $J(\bar z,\bar g)\le J(z,g)$ for all $(z,g)$ with $\|g-\bar g\|_{H^1(I;L^2(\Omega))}\le\delta$ and satisfying \eqref{eq:state_equation}.
Let us consider the relaxed optimal control problem
with the regularized state equation \eqref{eq:regularized_system} as constraint
\begin{equation}
 \label{eq:regularized_optimal_control_problem}
 \tag{P${}_\rho$}
\begin{aligned}
	\text{Minimize} \qquad&
	J(z,g) + \frac 12\|g-\bar g\|_{H^1(I; L^2(\Omega))}^2 \\
\text{ subject to} \qquad& \text{$\|g-\bar g\|_{H^1(I; L^2(\Omega))} \le \delta$, $g(0)=0$,  and \eqref{eq:regularized_system}}.
\end{aligned}
\end{equation}
Note that $\bar g$ is a feasible control for this problem.
With similar arguments as in the proof of \cref{lem:existence} we
can show the existence of global solutions of \eqref{eq:regularized_optimal_control_problem}.

\begin{lemma}
There exists a (global) optimal control of \eqref{eq:regularized_optimal_control_problem}.
\end{lemma}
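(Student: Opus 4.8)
The plan is to invoke the direct method of the calculus of variations, in complete analogy with the proof of \cref{lem:existence}. Since $\bar g$ is feasible for \eqref{eq:regularized_optimal_control_problem}, the infimal value $j_\rho$ is finite; moreover, the reduced objective $g \mapsto J(\SS_\rho(g),g) + \tfrac12\,\norm{g-\bar g}_{H^1(I;L^2(\Omega))}^2$ is well defined (recall that $\SS_\rho$ is defined on all of $L^2(I;H^{-1}(\Omega)) \supset H^1(I;L^2(\Omega))$) and bounded from below, because $j_1,j_2$ are bounded from below and the remaining two terms are nonnegative. I would take a minimizing sequence $\{g_n\}_{n\in\N}$ of feasible controls and set $z_n:=\SS_\rho(g_n)$. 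The constraint $\norm{g_n-\bar g}_{H^1(I;L^2(\Omega))}\le\delta$ bounds $\{g_n\}_{n\in\N}$ in $H^1(I;L^2(\Omega))$, so after extracting a subsequence (without relabeling) we obtain $g_n\weakly g_\rho$ in $H^1(I;L^2(\Omega))$. The feasible set $\{g\in H^1(I;L^2(\Omega)):\norm{g-\bar g}_{H^1(I;L^2(\Omega))}\le\delta,\ g(0)=0\}$ is convex and closed, hence weakly sequentially closed; thus $g_\rho$ is feasible, and in particular $g_\rho(0)=0$, so that $g_\rho\in\GG_0$ and $z_\rho:=\SS_\rho(g_\rho)$ is well defined.

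The crucial point is the strong convergence $z_n\to z_\rho$, which is the regularized counterpart of \cref{lem:weak_continuity}. To obtain it, I would test \eqref{eq:regularized_state_eq}, written for $z_n$ and for $z_\rho$, with $\dot z_n-\dot z_\rho$, take the difference, and integrate over $(0,t)$. The term coming from $\absrho{\cdot}'$ is nonnegative by monotonicity, the viscous term $\rho\,\norm{\dot z_n-\dot z_\rho}_{L^2(0,t;H_0^1(\Omega))}^2$ is nonnegative and may be dropped, and the elliptic contribution equals $\tfrac12\,\norm{z_n(t)-z_\rho(t)}_{H_0^1(\Omega)}^2$ since $z_n(0)=z_\rho(0)=0$. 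Estimating the remaining duality pairing by Cauchy--Schwarz yields
\begin{equation*}
	\tfrac12\,\norm{z_n(t)-z_\rho(t)}_{H_0^1(\Omega)}^2
	\le
	\norm{g_n-g_\rho}_{L^2(I;H^{-1}(\Omega))}\,\norm{\dot z_n-\dot z_\rho}_{L^2(I;H_0^1(\Omega))}
	\qquad\text{for all } t\in\bar I .
\end{equation*}
By \cref{lem:boundedness_H1H1} (applicable since $g_n,g_\rho\in\GG_0$) the sequence $\{\dot z_n\}_{n\in\N}$ is bounded in $L^2(I;H_0^1(\Omega))$, and the compact embedding $H^1(I;L^2(\Omega))\embeds L^2(I;H^{-1}(\Omega))$ gives $g_n\to g_\rho$ in $L^2(I;H^{-1}(\Omega))$; together these imply $z_n\to z_\rho$ in $C(\bar I;H_0^1(\Omega))$, and in particular $z_n(T)\to z_\rho(T)$ in $H_0^1(\Omega)$.

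Finally, $z_n\to z_\rho$ in $C(\bar I;H_0^1(\Omega))\embeds L^2(I;H_0^1(\Omega))$ together with the continuous Fréchet differentiability of $j_1$ and $j_2$ gives $j_1(z_n)\to j_1(z_\rho)$ and $j_2(z_n(T))\to j_2(z_\rho(T))$, while $\norm{\cdot}_{H^1(I;L^2(\Omega))}^2$ and $\norm{\cdot-\bar g}_{H^1(I;L^2(\Omega))}^2$ are convex and continuous, hence weakly lower semicontinuous. Consequently,
\begin{multline*}
	J(z_\rho,g_\rho)+\tfrac12\,\norm{g_\rho-\bar g}_{H^1(I;L^2(\Omega))}^2
	\\
	\le
	\liminf_{n\to\infty}\Bigl( J(z_n,g_n)+\tfrac12\,\norm{g_n-\bar g}_{H^1(I;L^2(\Omega))}^2\Bigr)
	=
	j_\rho ,
\end{multline*}
so $(z_\rho,g_\rho)$ is a global minimizer of \eqref{eq:regularized_optimal_control_problem}. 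The only step with genuine content is the strong convergence $z_n\to z_\rho$ from merely weak convergence of the controls; as indicated it reduces to the monotonicity-based energy estimate combined with the compact embedding $H^1(I;L^2(\Omega))\embeds L^2(I;H^{-1}(\Omega))$, exactly as at the end of the proof of \cref{lem:weak_continuity}.
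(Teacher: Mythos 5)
Your proof is correct and follows essentially the route the paper intends: the paper only remarks that the existence follows ``with similar arguments as in the proof of \cref{lem:existence}'', i.e.\ the direct method with weak convergence of a minimizing sequence of controls and strong convergence of the associated states. Your explicit verification of the regularized counterpart of \cref{lem:weak_continuity} (the monotonicity/energy estimate, exactly as in \cref{lem:uniform_lipschitz_regularized}, combined with the compact embedding $H^1(I;L^2(\Omega)) \embeds L^2(I;H^{-1}(\Omega))$ and the uniform bound from \cref{lem:boundedness_H1H1}, which applies since $g(0)=0$ puts the feasible controls in $\GG_0$) is precisely the step the paper leaves implicit.
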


Due to special construction of \eqref{eq:regularized_optimal_control_problem}, we can prove convergence of global minimizers
to the local solution $(\bar z, \bar g)$.

\begin{theorem}
\label{thm:convergence_minimize}
Let $\{(z_\rho,g_\rho)\}_{\rho>0}$ denote a family of global solutions of \eqref{eq:regularized_optimal_control_problem}.
 Then it holds $g_\rho\to \bar g$ and $z_\rho \to \bar z$ for $\rho\searrow0$ in $H^1_\star(I;L^2(\Omega))$ and $C(\bar I; H_0^1(\Omega))$, respectively.
\end{theorem}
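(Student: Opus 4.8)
The plan is to use the standard $\Gamma$-convergence-type argument for the convergence of minimizers, exploiting the special structure of \eqref{eq:regularized_optimal_control_problem} with its Tikhonov-like term $\tfrac12\|g-\bar g\|_{H^1(I;L^2(\Omega))}^2$ and the box constraint $\|g-\bar g\|_{H^1(I;L^2(\Omega))}\le\delta$. First I would extract a weakly convergent subsequence: since each $g_\rho$ lies in the $\delta$-ball around $\bar g$ in $H^1_\star(I;L^2(\Omega))$, the family $\{g_\rho\}$ is bounded there, so along a subsequence $\rho_k\searrow0$ we have $g_{\rho_k}\weakly \tilde g$ in $H^1_\star(I;L^2(\Omega))$ for some $\tilde g$ with $\|\tilde g-\bar g\|_{H^1(I;L^2(\Omega))}\le\delta$ and $\tilde g(0)=0$ (the constraint set is weakly closed, being closed and convex). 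By \autoref{cor:state_equation_conv} applied to $g_{\rho_k}\weakly\tilde g$, the associated states satisfy $z_{\rho_k}=\SS_{\rho_k}(g_{\rho_k})\to\SS(\tilde g)=:\tilde z$ in $C(\bar I;H_0^1(\Omega))$, and in particular $z_{\rho_k}(T)\to\tilde z(T)$ in $H_0^1(\Omega)$. Thus $(\tilde z,\tilde g)$ is feasible for \eqref{eq:optimal_control_problem} and lies in the $\delta$-ball, hence admissible in the definition of local optimality of $(\bar z,\bar g)$.

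Next I would compare energies. For the upper bound, $\bar g$ is feasible for \eqref{eq:regularized_optimal_control_problem} for every $\rho$ (as noted in the excerpt), so optimality of $g_\rho$ gives
\begin{equation*}
	J(z_\rho,g_\rho)+\tfrac12\|g_\rho-\bar g\|_{H^1(I;L^2(\Omega))}^2
	\le
	J(\SS_\rho(\bar g),\bar g)+\tfrac12\|\bar g-\bar g\|_{H^1(I;L^2(\Omega))}^2
	=
	J(\SS_\rho(\bar g),\bar g).
\end{equation*}
By \autoref{cor:state_equation_conv} (the case $g_n\equiv\bar g$, using estimate \eqref{eq:state_equation_conv}), $\SS_\rho(\bar g)\to\SS(\bar g)=\bar z$ in $C(\bar I;H_0^1(\Omega))$, so continuity of $j_1,j_2$ yields $J(\SS_\rho(\bar g),\bar g)\to J(\bar z,\bar g)$. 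For the lower bound, along the subsequence I use weak lower semicontinuity of $J$: $j_1(z_{\rho_k})\to j_1(\tilde z)$ and $j_2(z_{\rho_k}(T))\to j_2(\tilde z(T))$ by strong convergence of the states, while $\tfrac12\|g\|_{H^1(I;L^2(\Omega))}^2$ is weakly lsc, so $\liminf_k J(z_{\rho_k},g_{\rho_k})\ge J(\tilde z,\tilde g)$; similarly $\liminf_k\tfrac12\|g_{\rho_k}-\bar g\|^2\ge\tfrac12\|\tilde g-\bar g\|^2$. Combining,
\begin{equation*}
	J(\tilde z,\tilde g)+\tfrac12\|\tilde g-\bar g\|_{H^1(I;L^2(\Omega))}^2
	\le
	\liminf_k\Bigh(){J(z_{\rho_k},g_{\rho_k})+\tfrac12\|g_{\rho_k}-\bar g\|_{H^1(I;L^2(\Omega))}^2}
	\le
	J(\bar z,\bar g).
\end{equation*}
Since $(\tilde z,\tilde g)$ is admissible for the local optimality of $(\bar z,\bar g)$, we have $J(\bar z,\bar g)\le J(\tilde z,\tilde g)$, which forces $\tfrac12\|\tilde g-\bar g\|_{H^1(I;L^2(\Omega))}^2\le0$, hence $\tilde g=\bar g$ and $\tilde z=\SS(\bar g)=\bar z$. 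The limit is therefore independent of the subsequence, so a subsequence–subsequence argument gives convergence of the whole family: $g_\rho\weakly\bar g$ in $H^1_\star(I;L^2(\Omega))$ and $z_\rho\to\bar z$ in $C(\bar I;H_0^1(\Omega))$.

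Finally I would upgrade the weak convergence $g_\rho\weakly\bar g$ to strong convergence in $H^1_\star(I;L^2(\Omega))$. Chaining the two energy inequalities along the full family and passing to the limit shows $\limsup_{\rho\searrow0}\Bigh(){J(z_\rho,g_\rho)+\tfrac12\|g_\rho-\bar g\|^2}\le J(\bar z,\bar g)$, while the lsc argument gives $\liminf\ge J(\bar z,\bar g)+0$; subtracting the already-established convergence $J(z_\rho,g_\rho)\to J(\bar z,\bar g)$ (which follows since $j_1(z_\rho)\to j_1(\bar z)$, $j_2(z_\rho(T))\to j_2(\bar z(T))$ by strong state convergence, and $\tfrac12\|g_\rho\|^2\to\tfrac12\|\bar g\|^2$ — this last by the same squeeze) leaves $\tfrac12\|g_\rho-\bar g\|_{H^1(I;L^2(\Omega))}^2\to0$. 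In fact the cleanest route is: from the two-sided bound one gets $\|g_\rho-\bar g\|_{H^1(I;L^2(\Omega))}^2\to0$ directly, because the penalty term is squeezed between $0$ and $J(\SS_\rho(\bar g),\bar g)-J(z_\rho,g_\rho)$, and both the minuend and (via lsc plus $g_\rho\weakly\bar g$) the liminf of the subtrahend converge to $J(\bar z,\bar g)$. The main obstacle is making sure the lower-semicontinuity step is legitimate, i.e. that $z_\rho\to\bar z$ strongly enough (in $C(\bar I;H_0^1(\Omega))$) for the continuity of $j_1$ and $j_2$ to apply and that the norm term's weak lsc combines correctly with the equality $J(z_\rho,g_\rho)\to J(\bar z,\bar g)$; all of this is supplied by \autoref{cor:state_equation_conv}, so no genuine difficulty remains beyond careful bookkeeping.
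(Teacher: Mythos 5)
Your proposal is correct and follows essentially the same route as the paper's proof: bounded controls give a weak subsequential limit, \autoref{cor:state_equation_conv} gives strong state convergence both for $g_{\rho_k}$ and for the fixed comparison control $\bar g$, the optimality of $g_{\rho_k}$ plus lower semicontinuity and the local optimality of $(\bar z,\bar g)$ force $\tilde g=\bar g$, and the same squeeze on the Tikhonov term upgrades weak to strong convergence of $g_\rho$. The only additions beyond the paper's argument are routine bookkeeping (weak closedness of the $\delta$-ball and the explicit subsequence–subsequence step), which the paper leaves implicit.
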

\begin{proof}
 Due to the constraints of \eqref{eq:regularized_optimal_control_problem}, the controls $\{g_\rho\}_{\rho > 0}$ are uniformly
 bounded in the space $H^1_\star(I;L^2(\Omega))$.
 Let now $\{\rho_k\}_{k \in \N}$ with $\rho_k>0$ and $\rho_k \searrow 0$ such that $g_{\rho_k}$ converges
 weakly in $H^1_\star(I;L^2(\Omega))$ to $\hat g$.
 By \autoref{cor:state_equation_conv} the associated sequence $\{z_{\rho_k}\}_{k \in \N}$ converges weakly in $H^1_\star(I; H^1_0(\Omega))$ to $\hat z$,
 with $\hat z = \SS (\hat g)$, thus $(\hat z,\hat g)$ satisfies the state equation \eqref{eq:state_equation}.
 Moreover, $z_{\rho_k}\to \hat z$ in $C(\bar I; H_0^1(\Omega))$.

 For $\rho>0$ let $\bar z_\rho$ denote the solution of the regularized equation \eqref{eq:regularized_system} to the fixed control $\bar g$.
 Then by \autoref{cor:state_equation_conv}, it holds $\bar z_\rho \to \bar z$ in $C(\bar I;H_0^1(\Omega))$.
 This implies the convergence $J(\bar z_\rho,\bar g) \to J(\bar z,\bar g)$.

 The optimality of $g_{\rho_k}$ yields
 \[
  J(z_{\rho_k},g_{\rho_k})  + \frac12\|g_{\rho_k}-\bar g\|_{H^1(I; L^2(\Omega))}^2 \le J(\bar z_{\rho_k},\bar g).
 \]
 Passing to the limit $k\to\infty$ it follows by lower-semicontinuity that
 \[
  J(\hat z,\hat g)  + \frac12\|\hat g-\bar g\|_{H^1(I; L^2(\Omega))}^2 \le J(\bar z,\bar g).
 \]
 The optimality of $(\bar z,\bar g)$ implies $J(\bar z,\bar g)\le J(\hat z,\hat g)$, which yields $\hat g = \bar g$ and $\hat z = \bar z$.
 Moreover, the strong convergence $g_\rho \to \bar g$ in $H^1_\star(I; L^2(\Omega))$
 follows from
 \begin{equation*}
	 \limsup_{k \to \infty} \frac12 \, \norm{ g_{\rho_k} - \bar g}^2_{H^1(I; L^2(\Omega))}
	 \le
	 \limsup_{k \to \infty} \bigh(){ J(\bar z_{\rho_k}, \bar g) - J(z_{\rho_k}, g_{\rho_k}) }
	 =
	 0.
	 \qedhere
 \end{equation*}
\end{proof}

This result shows that for all $\rho>0$  sufficiently small the constraint $\|g-\bar g\|_{H^1(I; L^2(\Omega))} \le \delta$
of \eqref{eq:regularized_optimal_control_problem} is
immaterial.  

\begin{remark}
 In view of the assumptions of \autoref{thm:convergence_in_H1}, we could relax the constraint
 in \eqref{eq:optimal_control_problem} and \eqref{eq:regularized_optimal_control_problem}
 on $g(0)$
 to $\|g(0)\|_{L^\infty(\Omega)}\le1$.
\end{remark}

\subsection{Regularized optimality system}
\label{subsec:regularized_optimality}

Let us now turn to the first-order optimality system of  \eqref{eq:regularized_optimal_control_problem}.
At first, we study the regularity of solutions of the adjoint system to \eqref{eq:regularized_system}.
For given $\rho>0$ and $z_\rho\in H^1(I;H_0^1(\Omega))$ it reads
\begin{subequations}
	\label{eq:regularized_adjoint}
	\begin{align}
		\label{eq:regularized_adjoint_1}
		-\dot q_\rho &=
		\Delta \xi_\rho + j_1'(z_\rho)
		\quad
		&&\text{in } H^{-1}(\Omega), \text{ a.e.\ on } I
		,
		\\
		\label{eq:regularized_adjoint_3}
		q_\rho(T)&=j_2'(z_\rho(T))
		&&\text{in } H^{-1}(\Omega),
		\\
		\label{eq:regularized_adjoint_2}
		- \rho \, \Delta \xi_\rho + \absrho{\dot z_\rho}''\,\xi_\rho
		&= q_\rho
		\quad
		&&\text{in } H^{-1}(\Omega), \text{ a.e.\ on } I
		.
	\end{align}
\end{subequations}
With the help of the adjoint variables we will express derivatives of the reduced objective functional.
Let us first prove existence and uniqueness of solutions.

\begin{lemma}
 Let $(z_\rho,g_\rho)\in H^1(I;H_0^1(\Omega))\times H^1(I; H^{-1}(\Omega))$ be given. Then there exists a unique solution
 $(q_\rho,\xi_\rho) \in H^1(I;H^{-1}(\Omega)) \times H^1(I;H_0^1(\Omega))$
 of the adjoint system \eqref{eq:regularized_adjoint}.
\end{lemma}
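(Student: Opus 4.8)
The strategy is to solve the adjoint system backward in time as a linear ODE in a Hilbert space, exactly mirroring the forward analysis of the regularized state equation in \autoref{subsec:regularized_state}. The key observation is that the third equation \eqref{eq:regularized_adjoint_2} is, for each fixed $t$, the semilinear elliptic equation $-\rho\,\Delta\xi_\rho + a(t)\,\xi_\rho = q_\rho(t)$ with coefficient $a(t) := \absrho{\dot z_\rho(t)}'' \ge 0$; since $\rho > 0$, the operator $-\rho\,\Delta + a(t)$ is uniformly elliptic and invertible from $H_0^1(\Omega)$ to $H^{-1}(\Omega)$, with a bound on its inverse that is uniform in $t$ (indeed $\norm{\xi_\rho(t)}_{H_0^1(\Omega)} \le \rho^{-1/2}\,\norm{q_\rho(t)}_{H^{-1}(\Omega)}$, by testing with $\xi_\rho(t)$ and using $a(t)\ge0$). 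This is the same operator $T_\rho'$-type inverse appearing in \autoref{thm:differentiability}. Denote the solution operator by $R_\rho(t) := (-\rho\,\Delta + a(t)\,\mathrm{id})^{-1}$, so that $\xi_\rho(t) = R_\rho(t)\,q_\rho(t)$.

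Substituting this into \eqref{eq:regularized_adjoint_1}, the system reduces to the single equation
\begin{equation*}
	-\dot q_\rho(t) = \Delta\,R_\rho(t)\,q_\rho(t) + j_1'(z_\rho(t))
	\quad\text{in } H^{-1}(\Omega),\ \text{a.e.\ on } I,
	\qquad q_\rho(T) = j_2'(z_\rho(T)).
\end{equation*}
This is a linear inhomogeneous ODE for $q_\rho$ in $H^{-1}(\Omega)$ with terminal condition; after the time reversal $t \mapsto T-t$ it becomes a standard initial-value problem. To apply a classical existence-and-uniqueness theorem (e.g.\ the Carathéodory/Lipschitz theory, as in the reference \cite{GajewskiGroegerZacharias} already used above) one needs: (i) the operator $t \mapsto \Delta\,R_\rho(t)$ is bounded uniformly in $t$ as a map $H^{-1}(\Omega)\to H^{-1}(\Omega)$ — this follows from the uniform bound $\norm{R_\rho(t)}_{\mathcal L(H^{-1}(\Omega),H_0^1(\Omega))}\le\rho^{-1/2}$ composed with $\Delta\in\mathcal L(H_0^1(\Omega),H^{-1}(\Omega))$; (ii) measurability in $t$, which follows since $a(t) = \absrho{\dot z_\rho(t)}''$ is measurable (as $\dot z_\rho \in L^2(I;H_0^1(\Omega))$ and $\absrho{\cdot}''$ is Lipschitz by \autoref{absrho_1}, hence $a \in L^\infty(Q)$), and the dependence $a \mapsto R_\rho$ is continuous in the appropriate sense; (iii) the inhomogeneity $t\mapsto j_1'(z_\rho(t))$ lies in $L^2(I;H^{-1}(\Omega))$, since $z_\rho\in H^1(I;H_0^1(\Omega))\subset C(\bar I;H_0^1(\Omega))$ and $j_1'$ is continuous from $L^2(I;H_0^1(\Omega))$ with values giving an $L^2$-in-time functional; and the terminal datum $j_2'(z_\rho(T))\in H^{-1}(\Omega)$ is well-defined. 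The ODE theory then yields a unique $q_\rho\in H^1(I;H^{-1}(\Omega))$. Finally, $\xi_\rho = R_\rho(\cdot)\,q_\rho \in H^1(I;H_0^1(\Omega))$: boundedness in $L^2(I;H_0^1(\Omega))$ is immediate from (i), and the time-regularity $\dot\xi_\rho\in L^2(I;H_0^1(\Omega))$ is obtained by differentiating \eqref{eq:regularized_adjoint_2} formally in $t$, which produces $-\rho\,\Delta\dot\xi_\rho + a\,\dot\xi_\rho = \dot q_\rho - \dot a\,\xi_\rho$, and using $\dot q_\rho\in L^2(I;H^{-1}(\Omega))$ together with control of $\dot a = \frac{\d}{\dt}\absrho{\dot z_\rho}''$ — here one uses $z_\rho\in H^2(I;H_0^1(\Omega))$ from \autoref{lem:boundedness_H1H1} so that $\ddot z_\rho\in L^2(I;H_0^1(\Omega))$, and the $C^{2,1}$ regularity of $\absrho{\cdot}$ (\autoref{absrho_1}) to bound $\absrho{\cdot}'''$.

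The main obstacle is step (iii)-type regularity bookkeeping for $\xi_\rho$, specifically controlling $\dot a = \absrho{\dot z_\rho}'''\,\ddot z_\rho$: this is where the $C^{2,1}$ (rather than merely $C^2$) assumption on $\absrho{\cdot}$ and the $H^2$-in-time regularity of $z_\rho$ are both essential, and one must be slightly careful since $\absrho{\cdot}'''$ exists only a.e.\ and is bounded, so $\dot a\in L^2(Q)$ and the difference-quotient argument for $\dot\xi_\rho$ must be run in $H^{-1}(\Omega)$ before inverting. Everything else is a routine application of linear ODE theory in Hilbert space, so I would state the reduction to the scalar ODE cleanly, invoke \cite{GajewskiGroegerZacharias} for existence and uniqueness of $q_\rho$, and then recover $\xi_\rho$ and its time-regularity by the elliptic estimate and a differentiated-equation argument.
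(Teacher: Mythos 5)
Your proposal follows essentially the same route as the paper: you eliminate $\xi_\rho$ via the solution operator of $-\rho\,\Delta + \absrho{\dot z_\rho}''$ (the paper's $T_\rho'(g_\rho+\Delta z_\rho)$), reduce \eqref{eq:regularized_adjoint_1} to a backward linear ODE in $H^{-1}(\Omega)$ handled by \cite[Satz 1.3, p.~166]{GajewskiGroegerZacharias}, and then recover $\xi_\rho$ and its time regularity from the uniform elliptic estimate and the Lipschitz continuity of $\absrho{\cdot}''$. Your differentiated-equation argument for $\dot\xi_\rho$ (using $z_\rho\in H^2(I;H_0^1(\Omega))$ from \autoref{lem:boundedness_H1H1}) is just the explicit version of the paper's finite-difference step, so the two proofs coincide in substance.
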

\begin{proof}
Using the operator $T_\rho'(g_\rho(t) + \Delta z_\rho(t))$, we can eliminate $\xi_\rho$ with \eqref{eq:regularized_adjoint_2} to rewrite \eqref{eq:regularized_adjoint_1} as a differential equation in $H^{-1}(\Omega)$:
\[
 -\dot q_\rho(t) = \Delta T_\rho'(g_\rho(t) + \Delta z_\rho(t))\,q_\rho(t) + j_1'(z_\rho(t)) \quad \text{in } H^{-1}(\Omega), \text{ f.a.a.\ } t \in I.
\]
By \cite[Satz 1.3, p.\@ 166]{GajewskiGroegerZacharias}, it follows that there exists a uniquely defined solution $q_\rho\in H^1(I;H^{-1}(\Omega))$.
This implies $\xi_\rho := T_\rho'(\cdot)\,q_\rho \in L^2(I;H_0^1(\Omega))$.
Since $v\mapsto \absrho{v}''$ is Lipschitz continuous, we can prove by finite differences the regularity $\xi_\rho \in H^1(I;H_0^1(\Omega))$.
\end{proof}

As a consequence, we can derive first-order optimality conditions for \eqref{eq:regularized_optimal_control_problem}.

\begin{theorem}
\label{thm:regularized_optimality}
 For $\rho>0$, let $(z_\rho,g_\rho)$ be a local optimal solution for
 the regularized optimal control problem \eqref{eq:regularized_optimal_control_problem}
 with $\|g_\rho-\bar g\|_{H^1(I;L^2(\Omega))}< \delta$. Then there exist uniquely determined functions
 $(q_\rho,\xi_\rho) \in H^1(I;H^{-1}(\Omega)) \times H^1(I;H_0^1(\Omega))$
 satisfying \eqref{eq:regularized_adjoint}
 as well as
 \begin{equation}\label{eq:regularize_gradient}
  - 2 \, \ddot g_\rho + \ddot {\bar g} + 2\,g_\rho -\bar g + \xi_\rho =0  \text{ in } H^{-1}(I;L^2(\Omega)), \quad g_\rho(0)=0,\quad
	2 \, \dot g_\rho(T) - \dot{\bar g}(T) = 0.
 \end{equation}
\end{theorem}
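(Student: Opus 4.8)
The plan is to apply standard first-order optimality theory to the regularized problem \eqref{eq:regularized_optimal_control_problem}, using the differentiability of the control-to-state map $\SS_\rho$ established in \autoref{thm:differentiability}. Since $(z_\rho, g_\rho)$ is a local solution with $\|g_\rho - \bar g\|_{H^1(I;L^2(\Omega))} < \delta$, the inequality constraint is inactive, so $g_\rho$ is an unconstrained local minimizer (subject only to the affine constraint $g(0)=0$) of the reduced objective
\begin{equation*}
	\Phi(g) := J(\SS_\rho(g), g) + \tfrac12 \|g - \bar g\|_{H^1(I;L^2(\Omega))}^2
\end{equation*}
over the closed affine subspace $\{g \in H^1(I;L^2(\Omega)) : g(0)=0\} = \bar g + H^1_\star(I;L^2(\Omega))$. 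Thus the variational inequality collapses to the equation $\Phi'(g_\rho) h = 0$ for all $h \in H^1_\star(I;L^2(\Omega))$.

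First I would compute $\Phi'(g_\rho) h$ explicitly. Writing $\zeta = \SS_\rho'(g_\rho) h$, which by \autoref{thm:differentiability} solves the linearized system \eqref{eq:regularized_linearized_system} with $z = z_\rho$, we get
\begin{equation*}
	\Phi'(g_\rho) h
	=
	j_1'(z_\rho)\,\zeta + j_2'(z_\rho(T))\,\zeta(T) + \scalarprod{g_\rho}{h}_{H^1(I;L^2(\Omega))} + \scalarprod{g_\rho - \bar g}{h}_{H^1(I;L^2(\Omega))}.
\end{equation*}
The next step is the standard adjoint manipulation: introduce $(q_\rho, \xi_\rho)$ solving \eqref{eq:regularized_adjoint} (existence and uniqueness supplied by the preceding lemma), test \eqref{eq:regularized_adjoint_1} against $\zeta$ and \eqref{eq:regularized_adjoint_2} against $\omega$, integrate by parts in time using $\zeta(0)=0$ and the terminal condition \eqref{eq:regularized_adjoint_3}, and exploit the symmetry of $\absrho{\dot z_\rho}''$ (the same coefficient appears in \eqref{eq:regularized_linearized_system_2} and \eqref{eq:regularized_adjoint_2}) together with the self-adjointness of $\Delta$. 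This yields the identity $j_1'(z_\rho)\,\zeta + j_2'(z_\rho(T))\,\zeta(T) = \scalarprod{\xi_\rho}{h}_{L^2(Q)}$. Substituting back, the optimality condition becomes
\begin{equation*}
	\scalarprod{\xi_\rho}{h}_{L^2(Q)} + 2\,\scalarprod{g_\rho}{h}_{H^1(I;L^2(\Omega))} - \scalarprod{\bar g}{h}_{H^1(I;L^2(\Omega))} = 0 \qquad \forall h \in H^1_\star(I;L^2(\Omega)).
\end{equation*}
Finally I would unpack the $H^1$ inner product as $\scalarprod{u}{h}_{H^1(I;L^2(\Omega))} = \scalarprod{u}{h}_{L^2(Q)} + \scalarprod{\dot u}{\dot h}_{L^2(Q)}$ and integrate the $\dot g$-terms by parts in time against $h$; since $h$ ranges over all of $H^1_\star(I;L^2(\Omega))$ (so $h(0)=0$ but $h(T)$ is free), this produces both the distributional equation $-2\ddot g_\rho + \ddot{\bar g} + 2 g_\rho - \bar g + \xi_\rho = 0$ in $H^{-1}(I;L^2(\Omega))$ and the natural boundary condition $2\dot g_\rho(T) - \dot{\bar g}(T) = 0$ at the free endpoint, while $g_\rho(0)=0$ is inherited from feasibility.

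The main obstacle is the adjoint testing step: one must justify that $\zeta, \omega \in L^2(I;H_0^1(\Omega))$ and $q_\rho \in H^1(I;H^{-1}(\Omega))$, $\xi_\rho \in H^1(I;H_0^1(\Omega))$ have enough regularity for all the dualities $\dual{\dot q_\rho}{\zeta}$, $\dual{\dot\zeta}{q_\rho}$, etc.\ to make sense and for the integration-by-parts-in-time formula to hold — this is exactly why the regularity assertions in the two preceding lemmas were recorded. A secondary point is the handling of the terminal term: care is needed that the boundary contribution from integrating $-\dot q_\rho$ against $\zeta$ matches $j_2'(z_\rho(T))\zeta(T)$ via \eqref{eq:regularized_adjoint_3}, with no spurious term at $t=0$ thanks to $\zeta(0)=0$. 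Everything else is routine, so I would keep the write-up brief, citing \autoref{thm:differentiability} and the adjoint existence lemma and presenting the adjoint computation as a single integration-by-parts display.
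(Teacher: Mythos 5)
Your proposal is correct and follows essentially the same route as the paper: exploit the inactive norm constraint to reduce to the variational equation $\Phi'(g_\rho)h=0$ on $H^1_\star(I;L^2(\Omega))$, use the differentiability of $\SS_\rho$ from \autoref{thm:differentiability} and the adjoint pair $(q_\rho,\xi_\rho)$ from the preceding existence lemma, and convert $j_1'(z_\rho)\,\zeta + j_2'(z_\rho(T))\,\zeta(T)$ into $\scalarprod{\xi_\rho}{h}_{L^2(Q)}$ by the standard integration-by-parts/adjoint identity, arriving at $\scalarprod{\xi_\rho}{h}_{L^2(Q)} + \scalarprod{2g_\rho-\bar g}{h}_{H^1(I;L^2(\Omega))}=0$, which is the weak form of \eqref{eq:regularize_gradient}. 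Your final step of unpacking the $H^1$ inner product to read off the natural boundary condition at $t=T$ is merely made explicit where the paper states it implicitly.
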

\begin{proof}
	Let $h \in H^1_\star(I;L^2(\Omega))$ be arbitrary.
	Since $(z_\rho, g_\rho)$ is locally optimal for \eqref{eq:regularized_optimal_control_problem}
	and $\|g_\rho-\bar g\|_{H^1(I;L^2(\Omega))}< \delta$,
	we have
	\begin{equation*}
		J'(z_\rho, g_\rho) \, (\SS'(g_\rho) \, h, h ) + \scalarprod{g_\rho - \bar g}{h}_{H^1(I; L^2(\Omega))}
		= 0
		.
	\end{equation*}
	Here, we used the Fréchet differentiability of $\SS$ from \autoref{thm:differentiability} and the Fréchet differentiability of $J$.
	We set $\zeta_\rho := \SS'(g_\rho) \, h$.
	Using the structure of $J$, we get
	\begin{equation*}
		j_1'(z_\rho) \, \zeta_\rho + j_2'(z_\rho(T)) \, \zeta_\rho(T) + \scalarprod{2 \, g_\rho - \bar g}{h}_{H^1(I; L^2(\Omega))}
		= 0
		.
	\end{equation*}
	Now, we use the definition of the adjoint variables and \eqref{eq:regularized_adjoint_1}--\eqref{eq:regularized_adjoint_3} implies
	\begin{equation*}
		\int_I \dual{-\dot q_\rho - \Delta\xi_\rho }{\zeta_\rho}_{H^{-1}(\Omega), H_0^1(\Omega)} \, \dt
		+
		\dual{q_\rho(T)}{\zeta_\rho(T)}_{H^{-1}(\Omega), H_0^1(\Omega)}
		+
		\scalarprod{2 \, g_\rho - \bar g}{h}_{H^1(I; L^2(\Omega))}
		= 0
		.
	\end{equation*}
	Via integration by parts, we obtain
	\begin{equation*}
		\int_I
		\dual{q_\rho}{\dot\zeta_\rho}_{H^{-1}(\Omega), H_0^1(\Omega)}
		-
		\dual{\xi_\rho}{\Delta\zeta_\rho}_{H_0^1(\Omega), H^{-1}(\Omega)}
		\, \dt
		+
		\scalarprod{2 \, g_\rho - \bar g}{h}_{H^1(I; L^2(\Omega))}
		= 0
		.
	\end{equation*}
	Using \eqref{eq:regularized_adjoint_2}, we get
	\begin{equation*}
		\int_I
		\dual{- \rho \, \Delta \xi_\rho + \absrho{\dot z_\rho}''\,\xi_\rho}{\dot\zeta_\rho}_{H^{-1}(\Omega), H_0^1(\Omega)}
		-
		\dual{\xi_\rho}{\Delta\zeta_\rho}_{H_0^1(\Omega), H^{-1}(\Omega)}
		\, \dt
		+
		\scalarprod{2 \, g_\rho - \bar g}{h}_{H^1(I; L^2(\Omega))}
		= 0
		.
	\end{equation*}
	Hence, the linearized state equation \eqref{eq:regularized_linearized_system_1}--\eqref{eq:regularized_linearized_system_2}
	yields
	\begin{equation*}
		\scalarprod{\xi_\rho}{h}_{L^2(I; L^2(\Omega))}
		+
		\scalarprod{2 \, g_\rho - \bar g}{h}_{H^1(I; L^2(\Omega))}
		= 0
	\end{equation*}
	for arbitrary $h \in H^1_\star(I; L^2(\Omega))$
	and this is the weak formulation of \eqref{eq:regularize_gradient}.
\end{proof}

Let us now derive bounds on $\xi_\rho$ and $q_\rho$ that are explicit with respect to $\rho$.

\begin{lemma}\label{lem:adjoint_boundedness}
 Let $z_\rho\in C(\bar I;H_0^1(\Omega))$ be given.
Let $(q_\rho,\xi_\rho)$ be the associated solution of \eqref{eq:regularized_adjoint}.
Then there is a constant $C>0$ independent of $\rho$ and $z_\rho$ such that
\begin{multline}
	\label{eq:estimate_q}
\|q_\rho\|_{L^\infty(I; H^{-1}(\Omega))}
+ \rho^{1/2} \, \|\xi_\rho\|_{L^2(I;H_0^1(\Omega))}
 + \bignorm{\absrho{\dot z_\rho}''\,\xi_\rho^2}_{L^1(Q)}^{1/2}
 \\
 \le C\left( \|j_2'(z_\rho(T))\|_{H^{-1}(\Omega)} + \|j_1'(z_\rho)\|_{L^2(I;H^{-1}(\Omega))}\right).
\end{multline}
\end{lemma}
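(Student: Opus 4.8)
The plan is to mimic the energy-type estimates used earlier (cf.\ \autoref{lem:boundedness_H1H1} and \autoref{thm:convergence_in_H1}) but now applied to the backward-in-time adjoint system \eqref{eq:regularized_adjoint}. The natural test functions will be $\xi_\rho$ for the first equation and $\dot q_\rho$-type quantities, so that the "viscous" term $-\rho\Delta\xi_\rho$ and the "stiffness" term $\absrho{\dot z_\rho}''\,\xi_\rho$ both produce sign-definite contributions. Concretely, I would first substitute \eqref{eq:regularized_adjoint_2} to eliminate $q_\rho$ from \eqref{eq:regularized_adjoint_1}, obtaining an equation of the form $-\dot q_\rho = \Delta\xi_\rho + j_1'(z_\rho)$ with $q_\rho = -\rho\Delta\xi_\rho + \absrho{\dot z_\rho}''\,\xi_\rho$. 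Testing the first equation with $\xi_\rho$ and integrating over $(t,T)$ (using the terminal condition \eqref{eq:regularized_adjoint_3}) gives, after an integration by parts in time that moves $\dot q_\rho$ onto $\xi_\rho$ and uses $\dot q_\rho$ paired against $\dot\xi_\rho$-contributions inside $q_\rho$, the identity
\begin{equation*}
	\text{(terminal terms)} + \int_t^T \rho\,\norm{\dot\xi_\rho(s)}_{H_0^1(\Omega)}^2 + \int_\Omega \tfrac12\absrho{\dot z_\rho}''\,\xi_\rho^2\,\dx\Big|_{\text{at }t}^{T}\text{-type terms} = \int_t^T\bigh(){\norm{\xi_\rho}_{H_0^1}^2 + \dual{j_1'(z_\rho)}{\xi_\rho}}\,\ds,
\end{equation*}
where the key point is that differentiating $\absrho{\dot z_\rho}''\,\xi_\rho$ against $\dot\xi_\rho$ produces $\tfrac{\d}{\ds}\!\big(\tfrac12\absrho{\dot z_\rho}''\xi_\rho^2\big)$ up to the term $\tfrac12 \frac{\d}{\ds}\!\big(\absrho{\dot z_\rho}''\big)\xi_\rho^2$. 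This last term is the one place where the regularity $\absrho{\cdot}\in C^{2,1}$ and the $H^2(I;H_0^1)$-regularity of $z_\rho$ (from \autoref{lem:boundedness_H1H1}) are needed to make sense of $\frac{\d}{\ds}\absrho{\dot z_\rho}'' = \absrho{\dot z_\rho}'''\ddot z_\rho$; however, since $\absrho{\cdot}''$ need not be monotone, this term does \emph{not} have a sign.

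To control the non-sign-definite term $\int_t^T\!\int_\Omega \tfrac12\,\absrho{\dot z_\rho}'''\,\ddot z_\rho\,\xi_\rho^2$, I would instead test \eqref{eq:regularized_adjoint_1} directly with $\xi_\rho$ \emph{without} first differentiating anything, using only the structure $q_\rho = T_\rho'(g_\rho+\Delta z_\rho)^{-1}\xi_\rho$ in the weak form $\dual{q_\rho}{y} = \rho\,\scalarprod{\xi_\rho}{y}_{H_0^1} + \scalarprod{\absrho{\dot z_\rho}''\xi_\rho}{y}_{L^2}$ for $y\in H_0^1(\Omega)$. Testing $-\dot q_\rho = \Delta\xi_\rho + j_1'(z_\rho)$ against $\xi_\rho$, integrating over $(t,T)$, and integrating by parts in time:
\begin{equation*}
	\dual{q_\rho(t)}{\xi_\rho(t)} - \dual{q_\rho(T)}{\xi_\rho(T)} + \int_t^T \dual{q_\rho}{\dot\xi_\rho}\,\ds = -\int_t^T \norm{\xi_\rho}_{H_0^1(\Omega)}^2\,\ds + \int_t^T \dual{j_1'(z_\rho)}{\xi_\rho}\,\ds.
\end{equation*}
Now $\dual{q_\rho(t)}{\xi_\rho(t)} = \rho\norm{\xi_\rho(t)}_{H_0^1}^2 + \int_\Omega \absrho{\dot z_\rho(t)}''\,\xi_\rho(t)^2\,\dx \ge 0$, which is exactly the good quantity on the left at time $t$; the terminal term is handled by $q_\rho(T)=j_2'(z_\rho(T))$ and $\xi_\rho(T) = T_\rho'(\cdot)j_2'(z_\rho(T))$ together with the uniform bound $\norm{T_\rho'(\cdot)}_{H^{-1}\to H_0^1}\le\rho^{-1/2}$, giving $\abs{\dual{q_\rho(T)}{\xi_\rho(T)}}\le\rho^{-1/2}\norm{j_2'(z_\rho(T))}_{H^{-1}}\norm{\xi_\rho(T)}_{H_0^1}$; but also $\dual{q_\rho(T)}{\xi_\rho(T)} = \rho\norm{\xi_\rho(T)}_{H_0^1}^2 + \int_\Omega \absrho{\dot z_\rho(T)}''\xi_\rho(T)^2 \ge 0$, so this terminal coupling term actually has the \emph{favorable} sign when moved to the left — hence it only helps. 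The remaining term $\int_t^T\dual{q_\rho}{\dot\xi_\rho}$ is the obstacle; here I would bound $\int_Q \absrho{\dot z_\rho}''\xi_\rho\dot\xi_\rho$ by writing it as $\tfrac12\frac{\d}{\ds}(\absrho{\dot z_\rho}''\xi_\rho^2) - \tfrac12\absrho{\dot z_\rho}'''\ddot z_\rho\,\xi_\rho^2$ and absorbing.

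Given the appearance of $\ddot z_\rho$ with no uniform $\rho$-bound, the cleaner route — and the one I would actually commit to — is to avoid testing with $\xi_\rho$ altogether and instead test \eqref{eq:regularized_adjoint_1} with $q_\rho$ interpreted via \eqref{eq:regularized_adjoint_2}, i.e.\ first obtain an $L^\infty(I;H^{-1})$ bound on $q_\rho$ by a Gr\"onwall argument on the reduced equation $-\dot q_\rho = \Delta T_\rho'(g_\rho+\Delta z_\rho)q_\rho + j_1'(z_\rho)$: pairing with $\Delta^{-1}q_\rho\in H_0^1(\Omega)$ (the $H^{-1}$ Riesz map) gives $-\tfrac12\frac{\d}{\dt}\norm{q_\rho}_{H^{-1}}^2 = \dual{\Delta T_\rho'(\cdot)q_\rho}{\Delta^{-1}q_\rho} + \dual{j_1'(z_\rho)}{\Delta^{-1}q_\rho}$, and the first term equals $-\scalarprod{T_\rho'(\cdot)q_\rho}{q_\rho}_{?}$ which — using the self-adjointness and positivity of $T_\rho'(\cdot)$ as the inverse of $-\rho\Delta+\absrho{\dot z_\rho}''$ — is $\le 0$, so $\norm{q_\rho}_{H^{-1}}$ satisfies a linear differential inequality with data $\norm{j_1'(z_\rho)}_{H^{-1}}$ and terminal value $\norm{j_2'(z_\rho(T))}_{H^{-1}}$, yielding the first term in \eqref{eq:estimate_q} by Gr\"onwall on $[t,T]$ with a constant depending only on $T$. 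Then, having $q_\rho$ bounded in $L^\infty(I;H^{-1})$, I read off from $\xi_\rho = T_\rho'(\cdot)q_\rho$ and the energy identity for the elliptic equation \eqref{eq:regularized_adjoint_2} — test with $\xi_\rho$ — that $\rho\norm{\xi_\rho(t)}_{H_0^1}^2 + \int_\Omega\absrho{\dot z_\rho(t)}''\xi_\rho(t)^2\,\dx = \dual{q_\rho(t)}{\xi_\rho(t)} \le \norm{q_\rho(t)}_{H^{-1}}\norm{\xi_\rho(t)}_{H_0^1} \le \rho^{-1/2}\norm{q_\rho(t)}_{H^{-1}}\cdot\rho^{1/2}\norm{\xi_\rho(t)}_{H_0^1}$; using $\rho^{1/2}\norm{\xi_\rho}_{H_0^1}\le\norm{q_\rho}_{H^{-1}}$ (which is the stated $\rho^{-1/2}$ bound on $T_\rho'(\cdot)$) and Young's inequality then gives both $\rho^{1/2}\norm{\xi_\rho}_{L^2(I;H_0^1)}$ and $\norm{\absrho{\dot z_\rho}''\xi_\rho^2}_{L^1(Q)}^{1/2}$ controlled by $\norm{q_\rho}_{L^2(I;H^{-1})}\le T^{1/2}\norm{q_\rho}_{L^\infty(I;H^{-1})}$, and assembling the three pieces with the Gr\"onwall constant proves \eqref{eq:estimate_q}. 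The main obstacle is confirming that the bilinear form $q\mapsto \dual{\Delta T_\rho'(g_\rho+\Delta z_\rho)q}{\Delta^{-1}q}$ is nonpositive with a constant (namely $0$) independent of $\rho$ — this rests on $T_\rho'(\cdot)$ being a positive self-adjoint operator because $\absrho{\dot z_\rho}''\ge 0$, which is guaranteed by \autoref{asm:regularized_absolute_value}, so the Gr\"onwall constant genuinely depends only on $T$.
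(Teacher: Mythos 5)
Your first step is sound and is, in substance, the paper's computation: pairing \eqref{eq:regularized_adjoint_1} with the $H^{-1}$-Riesz representative $(-\Delta)^{-1}q_\rho$ (mind the sign: the Riesz map is $(-\Delta)^{-1}$, not $\Delta^{-1}$) gives $-\tfrac12\tfrac{\d}{\dt}\norm{q_\rho}_{H^{-1}(\Omega)}^2 = -\dual{q_\rho}{\xi_\rho} + \dual{j_1'(z_\rho)}{(-\Delta)^{-1}q_\rho}$, and since testing \eqref{eq:regularized_adjoint_2} with $\xi_\rho$ shows $\dual{q_\rho}{\xi_\rho}=\rho\norm{\xi_\rho}_{H_0^1(\Omega)}^2+\int_\Omega\absrho{\dot z_\rho}''\,\xi_\rho^2\,\dx\ge0$, Gr\"onwall indeed yields the $L^\infty(I;H^{-1}(\Omega))$ bound on $q_\rho$ with a constant depending only on $T$.

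The gap is in your second step. Having discarded the nonnegative coupling term, you try to recover the bounds on $\rho^{1/2}\xi_\rho$ and on $\absrho{\dot z_\rho}''\,\xi_\rho^2$ from the elliptic equation pointwise in time, invoking $\rho^{1/2}\norm{\xi_\rho(t)}_{H_0^1(\Omega)}\le\norm{q_\rho(t)}_{H^{-1}(\Omega)}$. That bound is not available: testing \eqref{eq:regularized_adjoint_2} with $\xi_\rho$ gives only $\rho\norm{\xi_\rho(t)}_{H_0^1(\Omega)}\le\norm{q_\rho(t)}_{H^{-1}(\Omega)}$, i.e.\ the operator norm of $T_\rho'(\cdot)$ from $H^{-1}(\Omega)$ to $H_0^1(\Omega)$ is of order $\rho^{-1}$, not $\rho^{-1/2}$ (take $\absrho{\dot z_\rho}''\equiv0$ to see this is sharp), so pointwise in time you only get $\rho^{1/2}\norm{\xi_\rho(t)}_{H_0^1(\Omega)}\le\rho^{-1/2}\norm{q_\rho(t)}_{H^{-1}(\Omega)}$, which degenerates as $\rho\searrow0$. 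The loss is unavoidable at fixed $t$: when $\absrho{\dot z_\rho}''\equiv0$ one has $\xi_\rho=(-\rho\Delta)^{-1}q_\rho$ and $\dual{q_\rho(t)}{\xi_\rho(t)}=\rho^{-1}\norm{q_\rho(t)}_{H^{-1}(\Omega)}^2$ exactly, so no pointwise elliptic estimate in terms of $\norm{q_\rho(t)}_{H^{-1}(\Omega)}$ alone can produce \eqref{eq:estimate_q}. The $\rho$-uniform control of $\rho^{1/2}\xi_\rho$ in $L^2(I;H_0^1(\Omega))$ and of $\sqrt{\absrho{\dot z_\rho}''}\,\xi_\rho$ in $L^2(Q)$ is a space-time effect of the coupled system: instead of estimating $-\dual{q_\rho}{\xi_\rho}\le0$ away, keep it, add the identity $\dual{q_\rho}{\xi_\rho}=\rho\norm{\xi_\rho}_{H_0^1(\Omega)}^2+\int_\Omega\absrho{\dot z_\rho}''\,\xi_\rho^2\,\dx$ so that the coupling terms cancel, integrate over $(t,T)$ using \eqref{eq:regularized_adjoint_3}, and apply Gr\"onwall; then $\norm{q_\rho(t)}_{H^{-1}(\Omega)}^2$ and $\int_t^T\bigl(\rho\norm{\xi_\rho}_{H_0^1(\Omega)}^2+\int_\Omega\absrho{\dot z_\rho}''\,\xi_\rho^2\,\dx\bigr)\ds$ are bounded simultaneously by the data. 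This is exactly the paper's proof, which your argument is one cancellation away from; your preliminary attempts involving $\dot\xi_\rho$ and $\absrho{\dot z_\rho}'''\,\ddot z_\rho$ were rightly abandoned.
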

\begin{proof}
Testing \eqref{eq:regularized_adjoint_1} and \eqref{eq:regularized_adjoint_2}  by $(-\Delta)^{-1}q_\rho$ and $\xi_\rho$, respectively, adding both
equations and integrating on $(t,T)$ yields
\begin{multline*}
 \|q_\rho(t)\|_{H^{-1}(\Omega)}^2
 +\int_t^T \int_\Omega \rho \, |\nabla \xi_\rho|^2 + \absrho{\dot z_\rho}''\,\xi_\rho^2 \, \dx \, \ds\\
\begin{aligned}
&= \|j_2'(z_\rho(T))\|_{H^{-1}(\Omega)}^2
  + \int_t^T\dual{j_1'(z_\rho)}{(-\Delta)^{-1}q_\rho}_{H^{-1}(\Omega), H_0^1(\Omega)}\, \ds\\
  &\le\|j_2'(z_\rho(T))\|_{H^{-1}(\Omega)}^2 + \|j_1'(z_\rho)\|_{L^2(I;H^{-1}(\Omega))}^2 +  \int_t^T\|q_\rho(s)\|_{H^{-1}(\Omega)}^2 \, \ds.
  \end{aligned}
\end{multline*}
The claim follows now from Gronwall's inequality.
\end{proof}

It remains to get an estimate for $\xi_\rho$.
\begin{corollary}
	\label{cor:estimate_xi_rho}
	Let $z_\rho\in C(\bar I;H_0^1(\Omega))$ be given.
	Let $(q_\rho,\xi_\rho)$ be the associated solution of \eqref{eq:regularized_adjoint}.
	Then there is a constant $C>0$ independent of $\rho$ and $z_\rho$ such that it holds
	\begin{equation*}
		\norm{ \xi_\rho }_{W^{-1,p}(I; H_0^1(\Omega))}
		\le
		C\left( \|j_2'(z_\rho(T))\|_{H^{-1}(\Omega)} + \|j_1'(z_\rho)\|_{L^p(I;H^{-1}(\Omega))}\right)
		.
	\end{equation*}
	for all $p \in [2,\infty)$.
\end{corollary}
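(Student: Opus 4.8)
The plan is to use equation \eqref{eq:regularized_adjoint_2}, namely $-\rho\,\Delta\xi_\rho + \absrho{\dot z_\rho}''\,\xi_\rho = q_\rho$, to bound $\xi_\rho$ in the negative-order space $W^{-1,p}(I;H_0^1(\Omega))$ by testing against time-dependent test functions and exploiting the estimates already obtained in \autoref{lem:adjoint_boundedness}. More precisely, I would take an arbitrary $\varphi\in W^{1,p'}_0(I;H^{-1}(\Omega))$ with $1/p+1/p'=1$, pair it (in time) with the identity above, and estimate each of the two resulting terms. For the viscous term $-\rho\,\Delta\xi_\rho$, integrating $\dual{-\rho\,\Delta\xi_\rho(t)}{\varphi(t)}$ over $I$ gives $\rho\int_I\scalarprod{\nabla\xi_\rho}{\nabla((-\Delta)^{-1}\varphi)}\,\dt$, which by Cauchy--Schwarz is bounded by $\rho\,\norm{\xi_\rho}_{L^2(I;H_0^1(\Omega))}\,\norm{\varphi}_{L^2(I;H^{-1}(\Omega))}$; using $\rho^{1/2}\norm{\xi_\rho}_{L^2(I;H_0^1(\Omega))}\le C(\dots)$ from \eqref{eq:estimate_q} and $\rho^{1/2}\le C$ (as $\rho$ is bounded), this contributes the desired bound with an $L^2(I;H^{-1}(\Omega))\subset L^{p'}(I;H^{-1}(\Omega))$ norm of $\varphi$ (note $p'\le 2$ since $p\ge 2$, so $L^{p'}\hookrightarrow$ is the wrong way — see the obstacle paragraph).

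For the zeroth-order term, I would write $\int_I\dual{\absrho{\dot z_\rho}''\,\xi_\rho(t)}{\psi(t)}\,\dt$ with $\psi=(-\Delta)^{-1}\varphi$ and use the pointwise Cauchy--Schwarz bound $\abs{\absrho{\dot z_\rho}''\,\xi_\rho\,\psi}\le(\absrho{\dot z_\rho}''\xi_\rho^2)^{1/2}(\absrho{\dot z_\rho}''\psi^2)^{1/2}$; since $\absrho{\cdot}''\le 2/\rho$ by \autoref{absrho_4}, the second factor is controlled by $(2/\rho)^{1/2}\norm{\psi}_{L^2(\Omega)}$, so after integration in time and Hölder this term is bounded by $C\,\rho^{-1/2}\bignorm{\absrho{\dot z_\rho}''\xi_\rho^2}_{L^1(Q)}^{1/2}\norm{\varphi}_{L^{p'}(I;H^{-1}(\Omega))}$ — but this blows up as $\rho\searrow0$. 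The correct route is instead: from $q_\rho=-\rho\Delta\xi_\rho+\absrho{\dot z_\rho}''\xi_\rho$ one gets directly that $\xi_\rho = T_\rho'(\cdot)q_\rho$ with $\norm{T_\rho'(v)}_{H^{-1}\to H_0^1}\le\rho^{-1/2}$, which again is $\rho$-dependent. So the key is to avoid pointwise-in-time estimates and instead integrate by parts in time: write $\absrho{\dot z_\rho}''\xi_\rho = q_\rho + \rho\Delta\xi_\rho$ and observe from \eqref{eq:regularized_adjoint_1} that $\Delta\xi_\rho = -\dot q_\rho - j_1'(z_\rho)$, hence $\rho\Delta\xi_\rho = -\rho\dot q_\rho - \rho\,j_1'(z_\rho)$. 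Then $\rho\Delta\xi_\rho$ as an element of $W^{-1,p}(I;H^{-1}(\Omega))$ has norm $\le\rho(\norm{q_\rho}_{L^p(I;H^{-1}(\Omega))} + \norm{j_1'(z_\rho)}_{L^p(I;H^{-1}(\Omega))})$, which is fine. But we want $\xi_\rho$ itself in $H_0^1(\Omega)$-valued spaces, so this only bounds $\Delta\xi_\rho$, i.e. gives $\xi_\rho$ in an $H_0^1$-norm via $\Delta^{-1}$ — which is exactly what the statement asks: apply $\Delta^{-1}$ to \eqref{eq:regularized_adjoint_1} to get $\xi_\rho = -\Delta^{-1}\dot q_\rho - \Delta^{-1}j_1'(z_\rho)$ as an identity in $H_0^1(\Omega)$, pointwise a.e.\ in time.

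This last observation is the clean path, and I would organize the proof around it. Since $q_\rho\in L^\infty(I;H^{-1}(\Omega))\hookrightarrow L^p(I;H^{-1}(\Omega))$ with the bound from \eqref{eq:estimate_q}, its time-derivative $\dot q_\rho$ lies in $W^{-1,p}(I;H^{-1}(\Omega))$ with $\norm{\dot q_\rho}_{W^{-1,p}(I;H^{-1}(\Omega))}\le C\norm{q_\rho}_{L^p(I;H^{-1}(\Omega))}$ (plus the endpoint contribution $q_\rho(T)=j_2'(z_\rho(T))$, absorbed into the constant). Applying the isometry $\Delta^{-1}\colon H^{-1}(\Omega)\to H_0^1(\Omega)$ componentwise in time, $\Delta^{-1}\dot q_\rho\in W^{-1,p}(I;H_0^1(\Omega))$ with the same norm, and $\Delta^{-1}j_1'(z_\rho)\in L^p(I;H_0^1(\Omega))\hookrightarrow W^{-1,p}(I;H_0^1(\Omega))$ with norm $\le C\norm{j_1'(z_\rho)}_{L^p(I;H^{-1}(\Omega))}$. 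Summing and invoking \autoref{lem:adjoint_boundedness} to control $\norm{q_\rho}_{L^p(I;H^{-1}(\Omega))}\le\norm{q_\rho}_{L^\infty(I;H^{-1}(\Omega))}\le C(\norm{j_2'(z_\rho(T))}_{H^{-1}(\Omega)}+\norm{j_1'(z_\rho)}_{L^2(I;H^{-1}(\Omega))})\le C(\norm{j_2'(z_\rho(T))}_{H^{-1}(\Omega)}+\norm{j_1'(z_\rho)}_{L^p(I;H^{-1}(\Omega))})$ (using $I$ bounded so $L^p\hookrightarrow L^2$ for $p\ge2$ — wait, that embedding goes the other way on bounded intervals; one has $L^p(I)\hookrightarrow L^2(I)$ for $p\ge2$, which is the direction we need, good) gives the claimed estimate. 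The main obstacle is purely bookkeeping: making sure the $\rho$-dependence genuinely cancels, i.e.\ recognizing that one must \emph{not} estimate $\xi_\rho$ through the singular operator $T_\rho'$ but rather through the adjoint equation \eqref{eq:regularized_adjoint_1} inverted, which trades a factor $\dot{}$ (harmless, absorbed by going to $W^{-1,p}$ in time) for the singular factor $\rho^{-1/2}$ (which we cannot afford).
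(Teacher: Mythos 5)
Your final argument is correct and is essentially the paper's own proof: you express $\xi_\rho$ through $\Delta^{-1}$ applied to \eqref{eq:regularized_adjoint_1}, shift the time derivative onto the test function in $W^{1,p'}(I;H^{-1}(\Omega))$ (i.e.\ measure $\dot q_\rho$ in $W^{-1,p}$), control the boundary terms via $q_\rho(T)=j_2'(z_\rho(T))$ and the $L^\infty(I;H^{-1}(\Omega))$ bound on $q_\rho$ (which also covers the $t=0$ endpoint you left implicit), and conclude with \autoref{lem:adjoint_boundedness}. Your explicit rejection of the routes through \eqref{eq:regularized_adjoint_2} and $T_\rho'$, which would cost a factor $\rho^{-1/2}$, correctly identifies why the detour through the adjoint ODE is the only $\rho$-uniform one.
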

\begin{proof}
	For all $v \in W^{1,p'}(I; H^{-1}(\Omega))$,
	where $1 = 1/p + 1/p'$,
	by \eqref{eq:regularized_adjoint_1} and \eqref{eq:regularized_adjoint_3}
	we have
	\begin{align*}
		&\dual{\xi_\rho}{v}_{W^{-1,p}(I; H_0^1(\Omega)),W^{1,p'}(I; H^{-1}(\Omega))}
		=
		\int_I \dual{\xi_\rho}{v}_{H_0^1(\Omega), H^{-1}(\Omega)} \, \dt
		\\&\qquad
		=
		\int_I
		\bigdual{
			\Delta^{-1} \,
			\bigh(){
				-\dot q_\rho
				+
				j_1'(z_\rho)
			}
		}{v}_{H_0^1(\Omega), H^{-1}(\Omega)}
		\, \dt
		\\&\qquad
		=
		\int_I
		\bigdual{
			\Delta^{-1} \,
			q_\rho
		}{\dot v}_{H_0^1(\Omega), H^{-1}(\Omega)}
		\, \dt
		-
		\bigdual{
			\Delta^{-1} \,
			q_\rho(T)
		}{v(T)}_{H_0^1(\Omega), H^{-1}(\Omega)}
		\\&\qquad\qquad
		+
		\bigdual{
			\Delta^{-1} \,
			q_\rho(0)
		}{v(0)}_{H_0^1(\Omega), H^{-1}(\Omega)}
		+
		\int_I
		\bigdual{
			\Delta^{-1} \,
			j_1'(z_\rho)
		}{v}_{H_0^1(\Omega), H^{-1}(\Omega)}
		\, \dt
		\\&\qquad
		\le
		C \,
		\bigh(){
			\norm{q_\rho}_{L^\infty(I; H^{-1}(\Omega))}
			+
			\norm{j_2'(z_\rho(T))}_{H^{-1}(\Omega)}
		}
		\, \norm{v}_{W^{1,1}(I; H^{-1}(\Omega))}
		\\&\qquad\qquad
		+
		\norm{j_1'(z_\rho)}_{L^p(I; H^{-1}(\Omega))} \, \norm{v}_{W^{1,p'}(I; H^{-1}(\Omega))}
		.
	\end{align*}
	Hence, the claim follows from \eqref{eq:estimate_q}.
\end{proof}

\section{Passing to the limit}
\label{subsec:limit_process}

In this final section
we investigate the limit $\rho\searrow0$
and prove our main result,
namely \autoref{thm:main_theorem}.

\begin{lemma}
\label{lem:complementarity}
Let $\{z_\rho\}_{\rho > 0}$ be given such that $z_\rho \to z$ in $H^1_\star(I;H_0^1(\Omega))$.
Let $\{(q_\rho,\xi_\rho)\}_{\rho > 0}$ be the family of solutions of the adjoint system \eqref{eq:regularized_adjoint}.
If $q_\rho\rightharpoonup q$ in $L^2(I;H^{-1}(\Omega))$ then it holds
\[
 \dual{q}{\phi \, |\dot z|} = 0 \quad \forall\phi\in L^2(I;C_0^\infty(\Omega)).
\]
\end{lemma}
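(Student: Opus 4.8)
The plan is to test the second adjoint equation \eqref{eq:regularized_adjoint_2} against $\phi\,\abs{\dot z_\rho}$, to show that the resulting number tends to zero as $\rho\searrow0$ by exploiting the structure of the regularization together with the a-priori bounds of \autoref{lem:adjoint_boundedness}, and finally to identify the limit with $\dual{q}{\phi\,\abs{\dot z}}$ via the strong continuity of $v\mapsto\abs{v}$ on $H_0^1(\Omega)$.

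First I would fix $\phi$ as in the statement (so that $\phi$ and $\nabla\phi$ are bounded on $Q$) and observe that $\abs{\dot z_\rho}\in L^2(I;H_0^1(\Omega))$ with $\nabla\abs{\dot z_\rho}=\operatorname{sign}(\dot z_\rho)\,\nabla\dot z_\rho$ a.e., so that $\phi\,\abs{\dot z_\rho}\in L^2(I;H_0^1(\Omega))$ and the pairing $\dual{q_\rho}{\phi\,\abs{\dot z_\rho}}$ is well defined. Using \eqref{eq:regularized_adjoint_2} it decomposes as
\[
  \dual{q_\rho}{\phi\,\abs{\dot z_\rho}}
  = \rho\int_Q \nabla\xi_\rho\cdot\nabla\bigl(\phi\,\abs{\dot z_\rho}\bigr)\,\dxt
  + \int_Q \absrho{\dot z_\rho}''\,\xi_\rho\,\phi\,\abs{\dot z_\rho}\,\dxt
  =: A_\rho + B_\rho .
\]
For $B_\rho$ I would write $\absrho{\dot z_\rho}''\,\abs{\dot z_\rho}=(\absrho{\dot z_\rho}'')^{1/2}\,(\absrho{\dot z_\rho}'')^{1/2}\abs{\dot z_\rho}$ and use the Cauchy--Schwarz inequality: the factor $\bignorm{(\absrho{\dot z_\rho}'')^{1/2}\xi_\rho}_{L^2(Q)}$ is bounded uniformly in $\rho$ by \autoref{lem:adjoint_boundedness} (its right-hand side stays bounded since $z_\rho\to z$ in $H^1_\star(I;H_0^1(\Omega))\embeds C(\bar I;H_0^1(\Omega))$ and $j_1',j_2'$ are continuous), while $\absrho{v}''\,v^2\le2\rho$ for all $v\in\R$ by \autoref{asm:regularized_absolute_value} (the second derivative vanishes for $\abs v\ge\rho$, and $\absrho{v}''v^2\le(2/\rho)\rho^2$ for $\abs v<\rho$), so that $\bignorm{\phi\,(\absrho{\dot z_\rho}'')^{1/2}\abs{\dot z_\rho}}_{L^2(Q)}\le(2\rho)^{1/2}\norm{\phi}_{L^\infty(Q)}\meas(Q)^{1/2}$; hence $B_\rho\to0$. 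For $A_\rho$ I would use $\nabla(\phi\,\abs{\dot z_\rho})=\abs{\dot z_\rho}\,\nabla\phi+\phi\,\operatorname{sign}(\dot z_\rho)\,\nabla\dot z_\rho$ together with $\rho\,\norm{\nabla\xi_\rho}_{L^2(Q)}=\rho^{1/2}\bigl(\rho^{1/2}\norm{\xi_\rho}_{L^2(I;H_0^1(\Omega))}\bigr)\le C\rho^{1/2}$ (again \autoref{lem:adjoint_boundedness}) and the boundedness of $\dot z_\rho$ in $L^2(I;H_0^1(\Omega))$ to get $A_\rho\to0$. This shows $\dual{q_\rho}{\phi\,\abs{\dot z_\rho}}\to0$.

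It remains to pass to the limit on the left-hand side. Here I would invoke the classical fact that $v\mapsto\abs v$ is continuous on $H_0^1(\Omega)$: one has $\bignorm{\abs{v_n}-\abs v}_{L^2}\le\norm{v_n-v}_{L^2}$ and $\nabla\abs{v_n}-\nabla\abs v=\operatorname{sign}(v_n)(\nabla v_n-\nabla v)+(\operatorname{sign}(v_n)-\operatorname{sign}(v))\nabla v$, the last term tending to zero along subsequences because $\operatorname{sign}(v_n)\to\operatorname{sign}(v)$ a.e.\ on $\{v\ne0\}$ while $\nabla v=0$ a.e.\ on $\{v=0\}$ (so the full sequence converges). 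Since $\dot z_\rho\to\dot z$ in $L^2(I;H_0^1(\Omega))$, this gives $\abs{\dot z_\rho}\to\abs{\dot z}$, hence $\phi\,\abs{\dot z_\rho}\to\phi\,\abs{\dot z}$ in $L^2(I;H_0^1(\Omega))$ (as $\phi,\nabla\phi$ are bounded). Combined with $q_\rho\weakly q$ in $L^2(I;H^{-1}(\Omega))$ this yields $\dual{q_\rho}{\phi\,\abs{\dot z_\rho}}\to\dual{q}{\phi\,\abs{\dot z}}$, and comparison with the previous step gives the claim $\dual{q}{\phi\,\abs{\dot z}}=0$.

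The main obstacle I expect is the term $B_\rho$. Near the ``stuck'' set $\{\dot z_\rho=0\}$ the only uniform control available on $\xi_\rho$ is the degenerate weighted bound $\bignorm{\absrho{\dot z_\rho}''\xi_\rho^2}_{L^1(Q)}\le C$ from \autoref{lem:adjoint_boundedness}; a crude estimate of $\int_Q\absrho{\dot z_\rho}''\abs{\xi_\rho}\abs{\dot z_\rho}$ would not suffice, because $\norm{\xi_\rho}_{L^2(Q)}$ may blow up like $\rho^{-1/2}$. One therefore has to pair the square-root weight with $\xi_\rho$ and absorb the remaining factor using the cancellation $\absrho{v}''v^2\le2\rho$ built into \autoref{asm:regularized_absolute_value}. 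A secondary technical point is the strong continuity of the absolute value on $H_0^1(\Omega)$, which requires the usual subsequence/a.e.-convergence argument.
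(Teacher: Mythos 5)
Your proposal is correct and follows essentially the same route as the paper: test \eqref{eq:regularized_adjoint_2} with $\phi\,\abs{\dot z_\rho}$, kill the viscous term via the uniform bound on $\rho^{1/2}\norm{\xi_\rho}_{L^2(I;H_0^1(\Omega))}$, kill the curvature term by pairing $(\absrho{\dot z_\rho}'')^{1/2}\xi_\rho$ (bounded in $L^2(Q)$ by \autoref{lem:adjoint_boundedness}) with the pointwise bound $\absrho{v}''\,v^2\le 2\rho$, and then pass to the limit by the weak--strong pairing of $q_\rho\weakly q$ with $\phi\,\abs{\dot z_\rho}\to\phi\,\abs{\dot z}$ in $L^2(I;H_0^1(\Omega))$. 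You even spell out the final limit identification (continuity of $v\mapsto\abs{v}$ on $H_0^1(\Omega)$) more explicitly than the paper does.
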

\begin{proof}
 Let $\phi\in L^2(I;C_0^\infty(\Omega))$.
 Then it holds $\phi \, |\dot z_\rho|\in L^2(I;H_0^1(\Omega))$, and $\nabla(\phi \, |\dot z_\rho|)$ is bounded in $L^2(Q)$ uniformly with respect to $\rho$.
 Testing \eqref{eq:regularized_adjoint_2} with $\phi \, |\dot z_\rho|$ yields
 \[
  \dual{q_\rho}{\phi \, |\dot z_\rho|} =
	 \int_Q \rho\nabla \xi_\rho \nabla(\phi \, |\dot z_\rho|) +
        \absrho{\dot z_\rho}''\,\xi_\rho\,\phi \, |\dot z_\rho|\,
        \dxt.
 \]
 The first addend on the right-hand side tends to zero for $\rho\searrow0$ as $\rho^{1/2} \, \nabla \xi$ and $\nabla(\phi \, |\dot z_\rho|)$
are bounded in $L^2(Q)$ uniformly in $\rho$, see \autoref{lem:adjoint_boundedness}.
To bound the second addend, observe that $\sqrt{\absrho{\dot z_\rho}''}\,|\dot z_\rho|$ is pointwise bounded by $\rho^{1/2}$
due to \autoref{asm:regularized_absolute_value}.
As $\sqrt{\absrho{\dot z_\rho}''}\,\xi_\rho$ is uniformly bounded in $L^2(Q)$ by \autoref{lem:adjoint_boundedness}, the second addend vanishes for $\rho\searrow0$ as well.
\end{proof}
In particular, \autoref{lem:complementarity} shows that \eqref{eq:formal_sign_conditions_1} and \eqref{eq:formal_sign_conditions_5} hold
in a distributional sense.
We are now in the position to formulate and prove the main result of this article.

\begin{theorem}
	\label{thm:main_theorem}
 Let $(\bar z,\bar g)$ be a local solution of \eqref{eq:optimal_control_problem}. Then there are
 $q\in L^\infty(I;H^{-1}(\Omega))$ and $\xi\in  W^{-1,p}(I;L^2(\Omega))$ (for all $p \in [2,\infty)$)
 such that
\begin{subequations}
	\label{eq:optimality_system}
	\begin{align}
		\label{eq:optimality_system_1}
		-\dot q &=
		\Delta \xi + j_1'(\bar z)
		,
		\quad
		&&
		\\
		\label{eq:optimality_system_2}
		q(T)&=j_2'(\bar z(T)), &&
		\\
		\label{eq:optimality_system_3}
		-\ddot {\bar g} + \bar g+ \xi &=0,
		\quad
		\bar g(0) = 0,
		\quad
		\dot{\bar g}(T) = 0
		,
		\\
		\label{eq:optimality_system_4}
		\dual{q}{\phi \, |\dot {\bar z}|} &= 0 &&\forall\phi\in L^2(I;C_0^\infty(\Omega))
	\end{align}
	is satisfied.
Here, \eqref{eq:optimality_system_1}--\eqref{eq:optimality_system_2} have to be understood in the following very weak sense:
For all $\phi \in H^1_\star(\bar I; H_0^1(\Omega))$ it holds
\begin{multline}
		\label{eq:optimality_system_5}
\int_I \dual{q}{\dot \phi}_{H^{-1}(\Omega), H^1_0(\Omega)} \, \dt
-
\dual{\phi}{\Delta \xi}_{H^1(I;H_0^1(\Omega)),H^{-1}(I;H^{-1}(\Omega))}
\\
=
\dual{j_2'(\bar z(T))}{\phi(T)}_{H^{-1}(\Omega), H^1_0(\Omega)}+ \int_I \dual{j_1'(\bar z)}{\phi}_{H^{-1}(\Omega), H^1_0(\Omega)} \, \dt.
\end{multline}
\end{subequations}
\end{theorem}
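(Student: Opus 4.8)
The plan is to obtain \eqref{eq:optimality_system} as the limit $\rho\searrow 0$ of the regularized optimality system from \autoref{thm:regularized_optimality}, exploiting the uniform a-priori bounds collected in \autoref{lem:adjoint_boundedness} and \autoref{cor:estimate_xi_rho} together with the strong convergence of states from \autoref{thm:convergence_minimize} and \autoref{thm:convergence_in_H1}. Concretely, I would fix a sequence $\rho_n\searrow 0$ and, for each $n$, let $(z_{\rho_n},g_{\rho_n})$ be a global solution of \eqref{eq:regularized_optimal_control_problem}; by \autoref{thm:convergence_minimize} (applicable since for small $\rho$ the constraint $\|g-\bar g\|_{H^1(I;L^2(\Omega))}\le\delta$ is inactive, so these are in fact local solutions with strict inequality and \autoref{thm:regularized_optimality} applies), we have $g_{\rho_n}\to\bar g$ in $H^1_\star(I;L^2(\Omega))$ and $z_{\rho_n}\to\bar z$ in $C(\bar I;H_0^1(\Omega))$. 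From \autoref{thm:convergence_in_H1} (or directly from the estimate \eqref{eq:state_equation_conv}-type bound via \autoref{cor:state_equation_conv_different_g}) one upgrades this to $z_{\rho_n}\to\bar z$ in $H^1_\star(I;H_0^1(\Omega))$, which is exactly the hypothesis needed for \autoref{lem:complementarity}.

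Next I would extract weak limits of the adjoint quantities. Since $j_1'$ and $j_2'$ are continuous and $z_{\rho_n}\to\bar z$ in $C(\bar I;H_0^1(\Omega))$, the right-hand sides $j_1'(z_{\rho_n})$ and $j_2'(z_{\rho_n}(T))$ converge (strongly in $L^p(I;H^{-1}(\Omega))$ and in $H^{-1}(\Omega)$ respectively) to $j_1'(\bar z)$ and $j_2'(\bar z(T))$; in particular they are bounded. Hence \autoref{lem:adjoint_boundedness} gives a uniform bound on $q_{\rho_n}$ in $L^\infty(I;H^{-1}(\Omega))$ and \autoref{cor:estimate_xi_rho} gives a uniform bound on $\xi_{\rho_n}$ in $W^{-1,p}(I;H_0^1(\Omega))$ for every $p\in[2,\infty)$. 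After passing to a subsequence, $q_{\rho_n}\weakly q$ in $L^2(I;H^{-1}(\Omega))$ (and weak-$*$ in $L^\infty(I;H^{-1}(\Omega))$), and $\xi_{\rho_n}\weakly\xi$ in $W^{-1,p}(I;H_0^1(\Omega))$, hence also in $W^{-1,p}(I;L^2(\Omega))$. Then \autoref{lem:complementarity} immediately yields \eqref{eq:optimality_system_4}. To obtain \eqref{eq:optimality_system_5}, I would start from the weak (very-weak) form of \eqref{eq:regularized_adjoint_1}--\eqref{eq:regularized_adjoint_3}: for $\phi\in H^1_\star(\bar I;H_0^1(\Omega))$, integrate $-\dot q_{\rho_n}=\Delta\xi_{\rho_n}+j_1'(z_{\rho_n})$ against $\phi$, integrate by parts in time using $\phi(0)=0$ and $q_{\rho_n}(T)=j_2'(z_{\rho_n}(T))$, to get
\begin{equation*}
\int_I \dual{q_{\rho_n}}{\dot\phi}_{H^{-1}(\Omega),H_0^1(\Omega)}\,\dt
-\dual{\phi}{\Delta\xi_{\rho_n}}_{H^1(I;H_0^1(\Omega)),H^{-1}(I;H^{-1}(\Omega))}
=\dual{j_2'(z_{\rho_n}(T))}{\phi(T)}_{H^{-1}(\Omega),H_0^1(\Omega)}+\int_I\dual{j_1'(z_{\rho_n})}{\phi}_{H^{-1}(\Omega),H_0^1(\Omega)}\,\dt,
\end{equation*}
and pass to the limit termwise using the weak convergences of $q_{\rho_n}$, $\xi_{\rho_n}$ and the strong convergences of the $j'$-terms. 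Finally \eqref{eq:optimality_system_3} is obtained by passing to the limit in \eqref{eq:regularize_gradient}: since $g_{\rho_n}\to\bar g$ in $H^1_\star(I;L^2(\Omega))$, the combination $-2\ddot g_{\rho_n}+\ddot{\bar g}+2g_{\rho_n}-\bar g$ converges to $-\ddot{\bar g}+\bar g$ in $H^{-1}(I;L^2(\Omega))$, while $\xi_{\rho_n}\weakly\xi$ in $W^{-1,p}(I;L^2(\Omega))\hookrightarrow H^{-1}(I;L^2(\Omega))$; the boundary conditions $g_{\rho_n}(0)=0$ and $2\dot g_{\rho_n}(T)-\dot{\bar g}(T)=0$ pass to the limit to give $\bar g(0)=0$, $\dot{\bar g}(T)=0$. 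The regularity claims $q\in L^\infty(I;H^{-1}(\Omega))$ and $\xi\in W^{-1,p}(I;L^2(\Omega))$ follow from the uniform bounds and weak-$*$ lower semicontinuity of norms.

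The main obstacle I anticipate is not any single limit passage but rather making the very weak formulation \eqref{eq:optimality_system_5} well-posed and self-consistent: the term $\dual{\phi}{\Delta\xi}_{H^1(I;H_0^1(\Omega)),H^{-1}(I;H^{-1}(\Omega))}$ must be interpreted correctly since $\xi$ only lies in a negative-order Bochner space in time while being in $H_0^1(\Omega)$ in space, so one must check that $\Delta\xi\in H^{-1}(I;H^{-1}(\Omega))$ is the appropriate pairing partner for $\phi\in H^1_\star(\bar I;H_0^1(\Omega))\subset H^1(I;H_0^1(\Omega))$ — this is where \autoref{cor:estimate_xi_rho} is essential, as it gives exactly $\xi_{\rho_n}$ bounded in $W^{-1,p}(I;H_0^1(\Omega))$ and hence $\Delta\xi_{\rho_n}$ bounded in $W^{-1,p}(I;H^{-1}(\Omega))$, so the duality is legitimate and stable under the limit. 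A secondary technical point is the subsequence bookkeeping: a priori the limits $q,\xi$ might depend on the chosen subsequence, but since \eqref{eq:optimality_system} does not assert uniqueness this is harmless — any weak limit point serves, and I would simply fix one subsequence throughout.
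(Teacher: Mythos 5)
Your proposal is correct and follows essentially the same route as the paper: take global solutions of \eqref{eq:regularized_optimal_control_problem}, invoke \autoref{thm:convergence_minimize} and \autoref{thm:regularized_optimality}, use the uniform bounds of \autoref{lem:adjoint_boundedness} and \autoref{cor:estimate_xi_rho} to extract weak(-$*$) limits $q,\xi$, pass to the limit in \eqref{eq:regularized_adjoint_1} and \eqref{eq:regularize_gradient}, and conclude \eqref{eq:optimality_system_4} from \autoref{lem:complementarity}. In fact you make explicit two points the paper's proof leaves implicit, namely that the constraint $\norm{g-\bar g}_{H^1(I;L^2(\Omega))}\le\delta$ is eventually inactive and that \autoref{thm:convergence_in_H1} upgrades the state convergence to $H^1_\star(I;H_0^1(\Omega))$ as required by \autoref{lem:complementarity}.
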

\begin{proof}
 Let $(z_\rho,g_\rho)$ be a family of global solutions of \eqref{eq:regularized_optimal_control_problem}
 such that $g_\rho\to \bar g$ and $z_\rho \to \bar z$ for $\rho\searrow0$ in $H^1(I;L^2(\Omega))$ and $C(\bar I; H_0^1(\Omega))$, respectively.
 This is possible due to \autoref{thm:convergence_minimize}.
 Let now $(q_\rho,\xi_\rho)$ be the associated adjoint states provided by \autoref{thm:regularized_optimality}.

 Due to \autoref{lem:adjoint_boundedness,cor:estimate_xi_rho}, we find that the dual quantities $(q_\rho,\xi_\rho)$ are
 uniformly bounded in $L^\infty(I;H^{-1}(\Omega))\times W^{-1,p}(I;H_0^1(\Omega))$, respectively.
 Thus we can pass to the limit in \eqref{eq:regularize_gradient} and \eqref{eq:regularized_adjoint_1} to obtain \eqref{eq:optimality_system_3} and \eqref{eq:optimality_system_5}.
\autoref{lem:complementarity} proves \eqref{eq:optimality_system_4}.
\end{proof}

\begin{remark}\hfill
	\label{rem:optimality_conditions}
	\begin{enumerate}
		\item
			In \autoref{thm:main_theorem},
			we have rigorously checked relations \eqref{eq:formal_sign_conditions_1} and \eqref{eq:formal_sign_conditions_5}
			from \eqref{eq:formal_sign_conditions}.
			As a next step, it would be desirable to prove also \eqref{eq:formal_sign_conditions_3}
			in variational terms.
			However, due to the low regularity of the involved quantities,
			$g \in H^1(I; L^2(\Omega))$, $\Delta z \in H^1(I; H^{-1}(\Omega))$
			and
			$\xi \in W^{-1,p}(I; L^2(\Omega))$ it is not clear how \eqref{eq:formal_sign_conditions_3} could be formulated.

			Let us highlight two possible approaches which might be tractable.
			First, one could
			take $\phi \in H^1(I; L^2(\Omega))$ with $\xi = 0$ on the set where $\abs{g + \Delta z} = 1$
			and show $\dual{\phi}{\xi} = 0$.
			However, since $g_\rho + \Delta z_\rho$ does only converge in $H^1(I; H^{-1}(\Omega))$,
			it is not clear how this complementarity could be derived.

			Alternatively,
			one could choose $\varphi \in C^\infty(\R; \R)$ with $\supp(\varphi) \subset [-1,1]$
			and
			show $\dual{\varphi(g + \Delta z)}{\xi} = 0$.
			However, this formulation requires $\varphi(g + \Delta z) \in H^1(I; L^2(\Omega))$,
			thus $z \in H^1(I; H^2(\Omega))$.
			This regularity of $z$, however, seems to be not available.

		\item
			The low regularity of the adjoint variables
                        $q$, $\xi$  is not surprising for it has
                        already been observed in connection with
other optimality systems for rate-independent evolutions,
			see, e.g.,
			\cite[Satz~8.12]{Brokate1987:1},
			\cite[Theorem~5.2]{BrokateKrejci2013},
			\cite[Theorem~3.1]{Wachsmuth2011:4}.
	\end{enumerate}
\end{remark}

\section{Conclusions and outlook}
\label{sec:concl}

In this work, we have derived
optimality conditions
for the optimal control of a rate-independent process.  The full
set of conditions has been formally derived and we have succeeded in presenting
rigorous arguments for the validity of a specific subset of those.

The verification of  the remaining optimality conditions as well
as their possible validity in a stronger regularity setting
will be the object of further research.
 A  time-discretization
or a decoupling of the smoothing of $\abs{\cdot}$ and the additional
viscosity
 might offer the chance of deriving  the complementarity
\eqref{eq:formal_sign_conditions_3}  as well. Note however that
this task is challenging by the low regularity of the adjoint
variables.

\section*{Acknowledgments}  U.S.\ acknowledges support by the Austrian Science Fund (FWF) projects  P
27052  and I 2375 and the kind hospitality of the Institute of Mathematics
of the University of W\"urzburg, where this research was initiated.
  This work has been funded by the Vienna Science and Technology Fund (WWTF)
through Project MA14-009.

D.W.\ and G.W.\ acknowledge support by DFG grants within the Priority Program SPP~1962
(\emph{Non-smooth and Complementarity-based Distributed Parameter Systems: Simulation and Hierarchical Optimization}).


\bibliography{references}
\bibliographystyle{plainnat}

\end{document}